\DeclareMathOperator{\rank}{rank}
\tikzstyle{EDR}=[draw=lightgray,line width=0pt,preaction={clip, postaction={pattern=north east lines, pattern color=gray}}]
\tikzstyle{EDR1}=[draw=lightgray,line width=0pt,preaction={clip, postaction={pattern=north west lines, pattern color=gray}}]
\def\wt#1{\widetilde{#1}}
\newcommand{\dist}{\operatorname{dist}}
\newcommand{\MKN}{\mathrm{MKN}}
\newcommand{\KN}{\mathrm{KN}}
\definecolor{mygray}{gray}{0.95}
\definecolor{mypink1}{rgb}{1.2,1.1,0.9}
\definecolor{mypink2}{rgb}{1.0,0.95 ,0.9}
\definecolor{mypink3}{rgb}{1.0,0.6,0.7}
\numberwithin{equation}{section}
\newtheorem{theorem}{Theorem}[section]
\newtheorem{lemma}{Lemma}[section]
\newtheorem{corollary}{Corollary}[section]
\newtheorem{conjecture}{Conjecture}[section]
\newtheorem{proposition}{Proposition}[section]
\newtheorem{definition}{Definition}[section]
\newtheorem{example}{Example}[section]
\theoremstyle{remark}
\newtheorem{remark}{Remark}[section]
\DeclareRobustCommand{\H}{\ifmmode\text{\textup{\bfseries H1)}}\else\textup{\bfseries H1)}\fi\xspace}
\DeclareRobustCommand{\HH}{\ifmmode\text{\textup{\bfseries H2)}}\else\textup{\bfseries H2)}\fi\xspace}
\DeclareRobustCommand{\HHH}{\ifmmode\text{\textup{\bfseries H2+)}}\else\textup{\bfseries H2+)}\fi\xspace}
\newcommand{\R}{\mathbb R}
\newcommand{\ZR}{\mathbb R}
\newcommand{\TT}{\mathbb T}
\newcommand{\ZT}{\mathbb T}
\newcommand{\ZH}{\mathbb H}
\newcommand{\ch}{\mathcal H}
\newcommand{\cp}{\mathcal P}
\newcommand{\Z}{\mathbb Z}
\newcommand{\ZS}{\mathbb S}
\newcommand{\si}{\sigma}
\newcommand{\be}{\beta}
\newcommand{\Si}{\Sigma}
\newcommand{\N}{\mathbb N}
\newcommand{\supp}{\operatorname{supp}}
\newcommand{\beq}{\begin{equation}}
	\newcommand{\eeq}{\end{equation}}
\newcommand{\beqq}{\begin{equation*}}
	\newcommand{\eeqq}{\end{equation*}}
\newcommand{\ben}{\begin{eqnarray}}
	\newcommand{\een}{\end{eqnarray}}
\newcommand{\beno}{\begin{eqnarray*}}
	\newcommand{\eeno}{\end{eqnarray*}}
\def\d{\delta}
\def\de{\delta}
\def\la{\lambda}
\def\e{\varepsilon}
\begin{document}

\title[Sharp microlocal Kakeya--Nikodym estimates]{Sharp microlocal Kakeya--Nikodym estimates for eigenfunctions with applications}

 \author{Chuanwei Gao}
	\address{School of Mathematical Sciences, Capital Normal University, Beijing 100048, China}
	\email{cwgao@cnu.edu.cn}
	\author{Shukun Wu}
	\address{Department of Mathematics, Indiana University Bloomington, USA}
	\email{shukwu@iu.edu}
	\author{Yakun Xi}
	\address{School of Mathematical Sciences, Zhejiang University, Hangzhou 310027, China}
	\email{yakunxi@zju.edu.cn}

\begin{abstract}
We extend the microlocal Kakeya--Nikodym bounds for eigenfunctions of Blair--Sogge to a larger range of exponents, which is optimal in all dimensions $n\ge3$ on general manifolds. On manifolds of constant sectional curvature, we introduce a new anisotropic variant of the microlocal Kakeya--Nikodym norm that further enlarges the admissible $p$-range. As a corollary, by combining our results with a recent theorem of Hou, we obtain improved $L^p$ bounds for Hecke--Maass forms on compact hyperbolic $3$-manifolds. 

{In particular, our method applies to general H\"ormander operators, and we characterize the $L^q \to L^p$ boundedness of H\"ormander operators with positive-definite phase in {all} dimensions $n\ge3$, thereby fully resolving a question going back to H\"ormander.} 
Further applications include improved $L^q \to L^p$ Fourier extension bounds, and improved bounds related to the Bochner--Riesz conjecture in $\mathbb R^3$.
\end{abstract}

\maketitle

\section{Introduction}

\subsection{Kakeya--Nikodym estimates for eigenfunctions of the Laplacian} Let $(M,g)$ be a smooth, compact, boundaryless Riemannian manifold of dimension $n\ge 2$. 
Let $\Delta_g$ denote the Laplace--Beltrami operator on $M$, and consider $L^2$-normalized eigenfunctions $e_\lambda$ satisfying
\[
    -\Delta_g e_\lambda=\lambda^2 e_\lambda,
\]
where $\lambda\ge 0$. After possibly rescaling the metric, we may assume that the injectivity radius of $(M,g)$ is at least $10$. 

Sogge's celebrated estimates \cite{sogge1988concerning} provide sharp bounds for the $L^p$ norms of eigenfunctions in terms of the eigenvalue $\lambda$:
\begin{equation}\label{eq:sogge}
    \|e_\lambda\|_{L^p(M)} \lesssim \lambda^{\sigma(p)}\|e_\lambda\|_{L^2(M)}.
\end{equation}
Here
\[
    \sigma(p)=
    \begin{cases}
        \dfrac{n-1}{2} \left(\dfrac{1}{2}-\dfrac{1}{p}\right), & \text{if } 2\le p\le 2\,\dfrac{n+1}{n-1}, \\[8pt]
        n \left(\dfrac{1}{2}-\dfrac{1}{p}\right)-\dfrac{1}{2}, & \text{if } 2\,\dfrac{n+1}{n-1}\le p\le \infty.
    \end{cases}
\]
These bounds describe how much eigenfunctions can concentrate as $p$ increases.
They are sharp on the standard sphere $\mathbb S^n$:
zonal harmonics saturate the upper bound for large $p$, while highest-weight spherical harmonics saturate the upper bound for small $p$.

It is of great interest to study the concentration of eigenfunctions on general Riemannian manifolds. Since $L^p$ norms quantify concentration, a natural question is whether Sogge's estimates can be improved on manifolds whose geometry differs from that of the sphere. As spherical harmonic examples suggest, for an eigenfunction to have large $L^p$ norms for some $2<p<2\,\tfrac{n+1}{n-1}$, it should concentrate in a $\lambda^{-1/2}$-neighborhood of a geodesic. The Kakeya--Nikodym norm is designed to measure such concentration. For $f\in L^2(M)$, define
\begin{equation}
\label{BS-microlocal-infor}
\|f\|_{\KN(\lambda)} := \Big( \sup_{\gamma\in\Pi} \lambda^{\frac{n-1}{2}} \int_{T_{\lambda^{-1/2}}(\gamma)} |f|^2 \, dx \Big)^{1/2},
\end{equation}
where $T_{\lambda^{-1/2}}(\gamma)$ is the $\lambda^{-1/2}$-tubular neighborhood of a unit-length geodesic segment $\gamma$ and $\Pi$ is the collection of all such segments. 
Building on \cite{bourgain2009geodesic,sogge2011kakeya}, Blair and Sogge \cite{blair2015refined} proved that, for $n=2$ and every $\varepsilon>0$ and $\lambda\ge1$,
\begin{equation}\label{eq:BSKN2A}
\|e_\lambda\|_{L^4(M)}\lesssim_\varepsilon \lambda^{\varepsilon}\,
    \|e_\lambda\|_{L^2(M)}^{1/2}\,\|e_\lambda\|_{\KN(\lambda)}^{1/2}.
\end{equation}
This estimate is sharp outside of the $\lambda^\varepsilon$ loss, being saturated by highest-weight spherical harmonics on the sphere. 
In fact, Blair and Sogge introduced a \emph{microlocal} Kakeya--Nikodym norm $\|\,\cdot\,\|_{\MKN(\lambda)}$, defined via a wave-packet decomposition adapted to the geodesic flow. We postpone the precise definition to Section \ref{sec wave}. Roughly speaking, the construction is as follows. 

Fix $\varepsilon_0=\varepsilon_0(\varepsilon)\in(0,1)$ and a dyadic $s$ with $\lambda^{-1/2+\varepsilon_0}\le s\le 1$. Given an eigenfunction $e_\lambda$, take a wave-packet decomposition at each scale $s$,
\[
e_\lambda=\sum_{T^s} e_{\lambda,T^s},
\]
where each $e_{\lambda,T^s}$ is microlocalized to an $s$-tubular neighborhood $T^s\subset M$ of a unit-length geodesic segment in $(M,g)$. The microlocal Kakeya--Nikodym norm of $e_\lambda$ is
\[
\|e_\lambda\|_{\MKN(\lambda)}
:=\sup_{\lambda^{-1/2+\varepsilon_0}\le s\le 1}\ \sup_{T^s}\ s^{-\frac{n-1}{2}}\ \|e_{\lambda,T^s}\|_{L^2(M)}.
\]

With this microlocal norm, Blair and Sogge proved the stronger bound
\begin{equation}\label{eq:BSMKN2d}
    \|e_\lambda\|_{L^4(M)}\lesssim_\varepsilon \lambda^{\varepsilon}\,
    \|e_\lambda\|_{L^2(M)}^{1/2}\,\|e_\lambda\|_{\MKN(\lambda)}^{1/2}.
\end{equation}
Blair--Sogge \cite{blair2017refined} later proved that, in all dimensions $n\ge2$, if $2\,\frac{n+2}{n}<p<2\,\frac{n+1}{n-1}$, then for any $\lambda\ge1$,
\begin{equation}\label{eq:bshighed}
\|e_\lambda\|_{L^p(M)}\lesssim
\lambda^{\frac{n-1}{2}-\frac{n}{p}}\,
\|e_\lambda\|_{L^2(M)}^{2-\frac{2(n+1)}{p(n-1)}}\,\|e_\lambda\|_{\KN(\lambda)}^{\frac{2(n+1)}{p(n-1)}-1}.
\end{equation}
{As observed by Blair and Sogge, enlarging the range of exponents $p$ for which \eqref{eq:bshighed} remains valid is a subtle problem. Any further progress in this direction demands a finer analysis than what is attainable through the bilinear estimates \cite{lee2006linear,tao2003sharp}.
} As one main result of this paper, we extend their range in higher dimensions and, in the constant sectional curvature setting, obtain even stronger bounds.

\begin{theorem}[Kakeya--Nikodym estimates for eigenfunctions]\label{theo main4}
Let $n\ge 3$. Then, for any $\varepsilon>0$ and $\lambda\ge 1$,
\begin{equation}\label{eq:bs}
\|e_\lambda\|_{L^p(M)}\lesssim_\varepsilon
    \lambda^{\frac{n-1}{2}-\frac{n}{p}+\varepsilon}\,
    \|e_\lambda\|_{L^2(M)}^{2-\frac{2(n+1)}{p(n-1)}}\,\|e_\lambda\|_{\KN(\lambda)}^{\frac{2(n+1)}{p(n-1)}-1},
\end{equation}
for all
\[
2\,\frac{3n+1}{3n-3}\le p\le 2\,\frac{n+1}{n-1}.
\]
Furthermore, if $M$ has constant sectional curvature, then \eqref{eq:bs} holds for the larger range
\[
   2\,\frac{3n+2}{3n-2}\le p\le 2\,\frac{n+1}{n-1}.
\]
\end{theorem}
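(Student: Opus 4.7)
The plan is to adapt the proofs of Theorems~\ref{theo main2} and~\ref{theo main3} to approximate spectral projectors on $M$, and then to convert the resulting microlocal Kakeya--Nikodym control into the geometric Kakeya--Nikodym norm~\eqref{BS-microlocal-infor}.

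First, by a standard reduction it suffices to bound the approximate spectral projector $\rho(\lambda-P)$, where $P=\sqrt{-\Delta_g}$ and $\rho\in C_c^\infty(\R)$ with $\rho(0)=1$. Writing
\[
\rho(\lambda-P)f=\frac{1}{2\pi}\int \hat\rho(t)\,e^{it\lambda}\,e^{-itP}f\,dt
\]
and invoking finite propagation speed, we may restrict to $|t|\le \varepsilon_0$. Applying the Hadamard parametrix for $e^{-itP}$ in a local coordinate chart realizes $\rho(\lambda-P)$ as a sum of Fourier integral operators whose phase, after the standard rescaling, satisfies the Carleson--Sj\"olin conditions \H and \HHH; the positive-definiteness \HHH is a direct consequence of the positive-definiteness of the Riemannian metric (strict convexity of the cosphere). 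Hence the estimate established in Theorem~\ref{theo main2} yields
\[
\|\rho(\lambda-P)f\|_{L^p(M)}\lesssim_\varepsilon \lambda^{\frac{n-1}{2}-\frac{n}{p}+\varepsilon}\,\|f\|_{L^2(M)}^{\,2-\frac{2(n+1)}{p(n-1)}}\,\|f\|_{\MKN(\lambda)}^{\frac{2(n+1)}{p(n-1)}-1}
\]
in the range $2\tfrac{3n+1}{3n-3}\le p\le 2\tfrac{n+1}{n-1}$. Taking $f=e_\lambda$ and using $\rho(\lambda-P)e_\lambda=e_\lambda$ controls the left-hand side of~\eqref{eq:bs}.

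The second step is to dominate the microlocal norm by the geometric one: $\|e_\lambda\|_{\MKN(\lambda)}\lesssim \|e_\lambda\|_{\KN(\lambda)}$. At each dyadic scale $s\ge \lambda^{-1/2+\varepsilon_0}$, the wave packet $e_\lambda^{T^s}$ is essentially supported in a curved tube which, by transverse equidistribution at the dual scale $\lambda^{1/2}$, is contained in the $\lambda^{-1/2}$-neighborhood of a unit geodesic segment. The normalization $s^{-(n-1)/2}$ in the definition of $\|\cdot\|_{\MKN(\lambda)}$ balances the geometric factor $\lambda^{(n-1)/4}$ in~\eqref{BS-microlocal-infor}, and almost-orthogonality among the wave packets sharing the same geodesic tube yields the claim.

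Finally, when $(M,g)$ has constant sectional curvature, I would replace Theorem~\ref{theo main2} by Theorem~\ref{theo main3}. Lifting to the universal cover ($\R^n$, $\mathbb S^n$, or $\mathbb H^n$), the half-wave propagator admits an explicit representation whose phase, in suitable global coordinates (Euclidean, geodesic polar, or Mercator-type), is locally conjugate to the translation-invariant extension phase appearing in~\eqref{eq:02a}; in particular, wave packets travel along genuine straight lines in these coordinates, so that the anisotropic Kakeya--Nikodym norm~\eqref{anisotropic-KN-norm} is available. Applying Theorem~\ref{theo main3} then yields~\eqref{eq:bs} in the wider range $p\ge 2\tfrac{3n+2}{3n-2}$, and the passage from $\widetilde{\MKN}$ to $\KN$ is as in the previous step, since every anisotropic tube is contained in the $\lambda^{-1/2}$-neighborhood of a geodesic. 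The main obstacle is the microlocal-to-geometric comparison, which requires verifying that the curved Carleson--Sj\"olin wave-packet tubes agree, modulo admissible errors, with the geodesic tubes defining $\|e_\lambda\|_{\KN(\lambda)}$; in the constant-curvature case one must additionally ensure that the translation-invariant structure of the phase survives gluing of local coordinate charts via a partition of unity.
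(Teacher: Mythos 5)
There is a genuine gap in the first step. You propose to realize $\rho(\lambda-P)$ as a sum of Fourier integral operators satisfying the $n\times(n-1)$ Carleson--Sj\"olin conditions \H and \HHH, and to invoke Theorem~\ref{theo main2} directly. But that reduction (freezing one component of $y$ so that $d_g(x,y)$ becomes an $n\times(n-1)$ phase) forces the $\MKN$ norm on the right side of Theorem~\ref{theo main2} to be a norm of the $(n-1)$-dimensional slice of $e_\lambda$, indexed by wave packets in $\mathbb R^{n-1}$, not a norm of $e_\lambda$ as an $n$-dimensional function. The paper explicitly identifies this as the obstruction: freezing a $y$-variable ``loses the microlocal information near geodesics captured by~\eqref{BS-microlocal-infor}, and it remains unclear how to recover this information.'' The paper therefore \emph{does not} go through Theorem~\ref{theo main2}. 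It instead builds the wave-packet decomposition for the $n\times n$ phase $d_g(x,y)$ from scratch (the pseudodifferential cutoffs $\mathcal Q^{R,s}_{\theta,v}(y,D)$ in Section~\ref{subsec:spec-proj}), proves Propositions~\ref{prop main1}--\ref{prop main2} in unified notation covering both the $n\times(n-1)$ and $n\times n$ settings, obtains Theorem~\ref{theo main5} this way, and only then deduces Theorem~\ref{theo main4} via the comparison $\|e_\lambda\|_{\MKN(\lambda)}\lesssim \|e_\lambda\|_{\KN(\lambda)}$, which now involves $n$-dimensional wave packets and is the standard Blair--Sogge estimate.

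Two smaller inaccuracies. First, in the microlocal-to-geometric comparison you say the wave packet at scale $s$ ``is contained in the $\lambda^{-1/2}$-neighborhood of a unit geodesic segment''; the containment actually goes the other way for $s>\lambda^{-1/2}$: the scale-$s$ tube is \emph{covered by} roughly $(s\lambda^{1/2})^{n-1}$ geodesic tubes of radius $\lambda^{-1/2}$, and it is this covering combined with the $s^{-(n-1)/2}$ normalization that gives $\|e_\lambda\|_{\MKN(\lambda)}\lesssim\|e_\lambda\|_{\KN(\lambda)}$. Second, in the constant-curvature case the propagator phase is not ``locally conjugate to the translation-invariant extension phase''; the relevant point from \cite{gao2025curved} is only that there are local diffeomorphisms straightening geodesics to lines (and totally geodesic submanifolds to affine subspaces), which lets the Euclidean incidence geometry (Lemma~\ref{KN-L2-lem2}, Lemma~\ref{incidence-lem}) transfer. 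The proof in Section~\ref{sec constantcurv} then runs Proposition~\ref{prop main2} in the unified notation rather than applying Theorem~\ref{theo main3} to a reduced extension operator.
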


As is customary in the study of Laplace eigenfunctions, we prove Theorem \ref{theo main4} by establishing stronger bounds for associated approximate spectral projectors. Fix $\chi\in\mathcal S(\mathbb R)$ with $\chi(0)=1$ and $\operatorname{supp}\widehat\chi\subset\big(\tfrac12,1\big)$. For $f\in L^2(M)$, let $\{e_{\lambda_j}\}_j$ be an $L^2(M)$-orthonormal basis of eigenfunctions, and define
\[
\chi_\lambda f:=\chi\big(\lambda-\sqrt{-\Delta_g}\big)f=\sum_j \chi(\lambda-\lambda_j)\,\langle f,e_{\lambda_j}\rangle\,e_{\lambda_j}.
\]
Then $\chi_\lambda$ reproduces $e_\lambda$ in the sense that $e_\lambda=\chi_\lambda e_\lambda$.
By standard reductions \cite[Lemma 5.1.3]{sogge2017fourier}, modulo rapidly decaying errors in $\lambda$, $\chi_\lambda$ can be written as an oscillatory integral operator with a positive-definite $n\times n$ Carleson--Sj\"olin phase:
\begin{equation}
\label{tilde-chi}
    \widetilde\chi_\lambda f(x)
=\lambda^{\frac{n-1}{2}}\int e^{i\lambda\,d_g(x,y)}\,a_\lambda(x,y)\,f(y)\,dy.
\end{equation}
Here $d_g(x,y)$ is the Riemannian distance on $M$, and $a_\lambda(x,y)$ is smooth. The phase $d_g(x,y)$ satisfies an $n\times n$ variant of the positive-definite Carleson--Sj\"olin conditions \cite{sogge2017fourier}. 

{We analyze \eqref{tilde-chi} using a wave-packet decomposition aligned with the geodesic flow and a multiscale analysis inspired by recent advances in Fourier restriction.  In particular, we combine the recently developed refined decoupling techniques with incidence estimates to address this problem. This yields microlocal Kakeya--Nikodym bounds for $\chi_\lambda$, which in turn give the first assertion of Theorem \ref{theo main4}.}

On constant curvature manifolds, however, the isotropic microlocal norm $\|\cdot\|_{\MKN(\lambda)}$ is not strong enough to register the anisotropic concentration that can occur. We therefore introduce a new anisotropic microlocal Kakeya--Nikodym norm. When $M$ has constant sectional curvature, there exist local bi-Lipschitz diffeomorphisms that straighten geodesics to lines and totally geodesic submanifolds to affine subspaces. See \cite{gao2025curved}. In these charts, we can naturally define rectangular boxes as in the Euclidean case, and the following norm is unambiguous:
\[
\|e_\lambda\|_{\widetilde{\MKN}(\lambda)}:=\sup_{\vec s\in[\lambda^{-1/2+\varepsilon_0},1]^{n-1}}\ \sup_{\Box_{\vec s}}\ \prod_{i=1}^{n-1}s_i^{-1/2}\,\|e_{\lambda,\Box_{\vec s}}\|_{L^2(M)},
\]
where $e_{\lambda,\Box_{\vec s}}$ ranges over wave envelopes supported in boxes of dimensions $1\times s_1\times\cdots\times s_{n-1}$, with each $s_i\in[\lambda^{-1/2+\varepsilon_0},1],\,i=1,2,\ldots,n-1$. This anisotropic norm is designed to exclude dense packing near totally geodesic submanifolds. We postpone the precise definition to Section \ref{sec wave}. Our bounds for $\chi_\lambda$ are as follows.

\begin{theorem}[Kakeya--Nikodym estimates for spectral projector]\label{theo main5}
    For $2\,\frac{3n+1}{3n-3}\le p\le 2\,\frac{n+1}{n-1}$, any $\varepsilon>0$ and $\lambda\ge 1$, 
\begin{equation}\label{eq:micc}
    \|\chi_\lambda f\|_{L^p(M)}\lesssim_\varepsilon  \lambda^{\frac{n-1}{2}-\frac{n}{p}+\varepsilon}\,\|f\|_{L^2(M)}^{2-\frac{2(n+1)}{p(n-1)}}\,
    \|f\|_{\MKN(\lambda)}^{\frac{2(n+1)}{p(n-1)}-1}.
\end{equation}
Furthermore, if $M$ has constant sectional curvature, then for any $\varepsilon>0$ and $\lambda\ge 1$,
\begin{equation}\label{eq:mica}
    \|\chi_\lambda f\|_{L^p(M)}\lesssim_\varepsilon  \lambda^{\frac{n-1}{2}-\frac{n}{p}+\varepsilon}\,\|f\|_{L^2(M)}^{2-\frac{2(n+1)}{p(n-1)}}\,
    \|f\|_{\widetilde{\MKN}(\lambda)}^{\frac{2(n+1)}{p(n-1)}-1},
\end{equation}
    for $ 2\,\frac{3n+2}{3n-2}\le p\le 2\,\frac{n+1}{n-1}$.
\end{theorem}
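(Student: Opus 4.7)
The plan is to carry the wave-packet analysis and induction-on-scales used in the proofs of Theorems \ref{theo main2} and \ref{theo main3} directly into the setting of the spectral projector, exploiting the fact that the $n\times n$ phase $d_g(x,y)$ is already positive-definite Carleson--Sj\"olin and that its wave-packets propagate along genuine geodesics of $(M,g)$.

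First I would replace $\chi_\lambda$ by the oscillatory-integral representative $\widetilde\chi_\lambda$ of \eqref{tilde-chi}, modulo rapidly decaying errors. As is classical (see \cite[Ch.~2]{sogge2017fourier}), $d_g$ satisfies the $n\times n$ analogue of \H and \HHH, so a wave-packet decomposition of $\widetilde\chi_\lambda f$ can be performed directly at each dyadic spatial scale $\lambda^{-1/2+\varepsilon_0}\le s\le 1$. The key geometric input is that the bicharacteristic flow of $d_g$ projects to the geodesic flow on $M$; consequently, each wave-packet is microlocalized to a curved tube of length $1$ and cross-section $s$ around a unit-length geodesic segment on $M$. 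This is precisely the geometry encoded in $\|f\|_{\MKN(\lambda)}$ from \eqref{BS-microlocal-infor}, so the $L^2$ wave-packet coefficients recover the geodesic-based Kakeya--Nikodym norm up to the usual $\lambda^\varepsilon$ losses. Running the broad--narrow dichotomy from the proof of Theorem \ref{theo main2}---a sharp $k$-broad estimate on the broad term, Bourgain--Demeter decoupling plus parabolic rescaling on the narrow term, closed by induction on scales---then yields \eqref{eq:micc}. A routine rescaling $x\mapsto x/\lambda$, combined with the $\lambda^{(n-1)/2}$ factor in \eqref{tilde-chi}, produces the prefactor $\lambda^{(n-1)/2-n/p+\varepsilon}$.

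For the constant-curvature case, I would pass to a chart in which geodesics become straight-line segments---the gnomonic chart on $\mathbb S^n$, the Klein chart on $\mathbb H^n$, or, in general, exponential charts on the universal cover glued by finite propagation speed. In such a chart the Hamiltonian flow of $|\xi|_g$ is linear in physical space, so the wave-packets for $\widetilde\chi_\lambda$ are honest straight tubes and an anisotropic decomposition indexed by $\vec s$ becomes well-defined on $M$, giving meaning to $\|f\|_{\widetilde{\MKN}(\lambda)}$ in the spectral-projector setting. Replicating the proof of Theorem \ref{theo main3}, with the strict convexity of $\Sigma$ replaced by \HHH for $d_g$, then delivers the stronger bound \eqref{eq:mica} in the wider range $p\ge 2\,\tfrac{3n+2}{3n-2}$.

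The main obstacle is the constant-curvature step: one must verify that the anisotropic wave-packet decomposition is globally coherent on $M$, and that the geometric ingredients in the proof of Theorem \ref{theo main3}---in particular, the Wang--Zahl-type packing arguments and the analogue of Lemma \ref{KN-L2-lem2}---survive the passage from Euclidean straight lines to geodesics in a constant-curvature $M$. For $\mathbb S^n$ and compact hyperbolic quotients this is handled by the transitive isometry group acting on orthonormal frames; on a general compact constant-curvature manifold it requires a standard gluing of local linearizations via finite propagation speed for the half-wave equation.
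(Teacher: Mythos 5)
Your high-level reduction to $\widetilde\chi_\lambda$ and the observation that $d_g$ satisfies the $n\times n$ positive-definite Carleson--Sj\"olin conditions, with wave-packets localized along geodesic tubes, is correct and matches the paper's setup in Section~\ref{sec wave}. However, the core of your argument---``running the broad--narrow dichotomy from the proof of Theorem~\ref{theo main2}: a sharp $k$-broad estimate on the broad term, Bourgain--Demeter decoupling plus parabolic rescaling on the narrow term''---is precisely the scheme the paper says \emph{does not work} here. The introduction states explicitly that the Guth--Hickman--Iliopoulou broad--narrow/decoupling machinery ``breaks down when seeking off-diagonal $(q,p)$ estimates, primarily because parabolic rescaling does not yield a compensating gain in this setting, while the decoupling theorem still incurs a critical loss.'' The mixed norm $\|f\|_{L^2}^{\alpha}\|f\|_{\MKN(\lambda)}^{1-\alpha}$ behaves like an off-diagonal estimate under parabolic rescaling, so the $\lambda^\varepsilon$ budget cannot be closed by the GHI narrow-case argument. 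Proposing that route is a genuine gap, not a matter of exposition.

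What the paper actually does (Section~\ref{sec general}) is quite different: it proves Propositions~\ref{prop main1} and \ref{prop main2} by an induction on scales built around a \emph{two-ends reduction}, not a broad--narrow dichotomy. After pigeonholing, one splits into a non-two-ends case (handled by the induction hypothesis on smaller balls) and a two-ends case. In the two-ends case one uses an incidence lemma---Lemma~\ref{bush} (a two-ends bush) and Proposition~\ref{pro1} for the general phase, or Lemma~\ref{incidence-lem} (Wolff hairbrush plus the Wang--Wu two-ends Furstenberg estimate, Theorem~\ref{two-ends-furstenberg}) for the constant-curvature/anisotropic case---to produce a set on which the tube multiplicity is $\lessapprox\mu$. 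One then feeds $\mu$ into the \emph{refined decoupling inequality} (Theorem~\ref{refine}), converts the resulting sum over wave-packets into the microlocal Kakeya--Nikodym norm via Lemma~\ref{KN-L2-lem} or Lemma~\ref{KN-L2-lem2}, and interpolates against the $L^2$ bound of Lemma~\ref{lem:l2}. None of these steps involves a $k$-broad estimate or Bourgain--Demeter decoupling with parabolic rescaling. Your discussion of the constant-curvature wave-packet coherence (straightening geodesics via a chart, cf.\ \cite{gao2025curved}) is on the right track, but you do not identify the key incidence inputs---the role of Wolff's axiom through Lemma~\ref{KN-L2-lem2} and the two-ends Furstenberg bound---which are what actually deliver the improved exponent $p\ge 2\,\tfrac{3n+2}{3n-2}$. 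Finally, note that Theorem~\ref{theo main5} in the paper is formally deduced from Propositions~\ref{prop main1}--\ref{prop main2} by the rescaling $\lambda^{(n-1)/2}\chi^\lambda f(x)=\chi_\lambda f(x/\lambda)+{\rm RapDec}(\lambda)\|f\|_{L^2}$; that part of your proposal is correct.
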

We remark that, using standard level set analysis, Theorem \ref{theo main5} also yields sharp $L^q\to L^p$ estimates for the operator $\chi_\lambda$. 
However, such $L^q\to L^p$ bounds alone do not provide useful information for eigenfunctions, since one needs $q=2$ to compare $\|e_\lambda\|_{L^p}$ with $\|e_\lambda\|_{L^2}$. For instance, in dimension $2$, the endpoint $(4,4)$ estimate gives
$\|e_\lambda\|_{L^4}\lesssim_\varepsilon \lambda^\varepsilon \|e_\lambda\|_{L^4}$, which is trivial for eigenfunctions. Thus the microlocal Kakeya--Nikodym estimates strengthen the $L^q\to L^p$ theory: they encode microlocal concentration and supply spectral--geometric information. {Moreover, on general manifolds, 
\eqref{eq:micc} cannot hold beyond the range  $2\,\frac{3n+1}{3n-3}\le p\le 2\,\frac{n+1}{n-1}$,  see Remark \ref{rem MKN implies p,q}.}

The constant sectional curvature assumption in Theorem \ref{theo main5} is natural in several senses. 
First, it is required for the definition of the anisotropic Kakeya--Nikodym norm, and local incidence bounds in $\mathbb R^n$ transfer verbatim to $(M,g)$. 
Second, in the flat case $d_g(x,y)=|x-y|$, the estimate \eqref{eq:mica} implies, via standard reductions, sharp $L^p$ bounds for the Bochner--Riesz multipliers. See Corollary \ref{cor bochner} below. 
Third, manifolds with constant sectional curvature are a core class in the study of Laplace eigenfunctions, which leads to a direct application of our results described in the next subsection.

\subsection{Improved $L^p$ bounds for Hecke--Maass forms} As mentioned earlier, Kakeya--Nikodym estimates for eigenfunctions have been widely used to obtain improved $L^p(M)$ estimates for eigenfunctions, especially for manifolds with negative sectional curvature; see a series of papers by Blair, Huang and Sogge \cite{blair2018concerning,blair2019logarithmic,blair2024improved,huang2025curvature}.

From a broader viewpoint, understanding eigenfunction concentration lies at the heart of quantum chaos. The quantum ergodicity theorem of Shnirel'man \cite{shnirel1974ergodic}, Colin de Verdi\`ere \cite{colin1985ergodicite}, and Zelditch \cite{zelditch1987uniform} shows that on manifolds whose geodesic flow is ergodic, almost all high-frequency eigenfunctions equidistribute in the weak limit, while the Quantum Unique Ergodicity (QUE) conjecture of Rudnick--Sarnak \cite{rudnick1994behaviour} predicts equidistribution of the entire sequence on negatively curved manifolds. Improving $L^p$ bounds over Sogge's universal estimates provides another quantitative way to measure equidistribution.

Nonetheless, the gains for the Kakeya--Nikodym norms obtained by Blair, Huang, and Sogge involve only a negative power of $\log\lambda$, limiting their strength and precluding application of our Theorem \ref{theo main4} due to the $\lambda^\varepsilon$-loss. In the arithmetic setting, however, one often expects stronger equidistribution results. For instance, QUE has been proved for \emph{Hecke--Maass forms} (i.e.\ joint eigenfunctions of the Laplacian and the Hecke operators) by Lindenstrauss \cite{lindenstrauss2006invariant}. In this setting, $L^p$ norms of Hecke--Maass forms are deeply linked to central values of $L$-functions; indeed, sufficiently strong uniform control of these $L^p$ norms would imply instances of the Lindel\"of hypothesis \cite{sarnak1993arithmetic}. By the amplification method of Iwaniec and Sarnak \cite{iwaniec1995norms}, one can obtain polynomial improvements in this setting; see also \cite{marshall2016geodesic}. For a three-dimensional compact arithmetic congruence hyperbolic manifold, Hou \cite{hou2024restrictions} obtained power-improved Kakeya--Nikodym norms for Hecke--Maass forms.

To describe Hou's result, we set
\[
M = \Gamma \backslash \mathbb{H}^3,
\]
where $\mathbb{H}^3=\mathrm{SL}(2,\mathbb{C})/\mathrm{SU}(2)$ is hyperbolic $3$-space and $\Gamma\subset\mathrm{SL}(2,\mathbb{C})$ is a cocompact arithmetic congruence lattice arising from a quaternion division algebra over a number field with one complex place. Equipped with the induced metric $g$, $(M,g)$ is a compact hyperbolic $3$-manifold. Let $\psi_\lambda$ be an $L^2$-normalized Hecke--Maass form on $M$, that is, an eigenfunction of the Laplace--Beltrami operator $\Delta_g$ and of the unramified Hecke operators. Then
\[
-\Delta_g\psi_\lambda(x)=\lambda^2\psi_\lambda(x).
\]
Hou \cite{hou2024restrictions} proved that, for any $\varepsilon>0$ and $\lambda\ge1$,
\begin{equation}\label{eq:Hou}
\sup_{\gamma\in\Pi}\|\psi_\lambda\|_{L^2(T_{\lambda^{-1/2}}(\gamma))}
\lesssim_\varepsilon\lambda^{-\frac1{20}+\varepsilon}\|\psi_\lambda\|_{L^2(M)}.
\end{equation}

Hou then applies Blair--Sogge's estimate \eqref{eq:bshighed} to obtain
\begin{equation}
\|\psi_\lambda\|_{L^p(M)}\lesssim_\varepsilon\,\lambda^{\frac12-\frac1p-\frac1{20}\bigl(\tfrac4p-1\bigr)+\varepsilon}
= \lambda^{\frac{11}{20}-\frac{6}{5p}+\varepsilon}\|\psi_\lambda\|_{L^2(M)},
\qquad 10/3\le p<4.
\end{equation}
Hou's result also yields improved $L^p$ bounds for $2<p<10/3$ via interpolation:
\begin{equation}\label{eq:Hou2}
\|\psi_\lambda\|_{L^p(M)}\lesssim_\varepsilon\,\lambda^{\frac{19}{40}-\frac{19}{20p}+\varepsilon}\|\psi_\lambda\|_{L^2(M)},
\qquad 2<p<10/3.
\end{equation}
Since our results sharpen those of Blair and Sogge in the constant-curvature setting, it is straightforward to combine Theorems \ref{theo main4} and \eqref{eq:Hou} to further improve \eqref{eq:Hou2} for all $2<p<\tfrac{10}{3}$.

\begin{corollary}[Improved $L^p$ bounds for Hecke--Maass forms]\label{theo main6}
For any $\varepsilon>0$ and $\lambda\ge1$,
\[
\|\psi_\lambda\|_{L^p(M)}\lesssim_\varepsilon\,\lambda^{\frac{11}{20}-\frac{6}{5p}+\varepsilon}\|\psi_\lambda\|_{L^2(M)},
\qquad 22/7\le p<4,
\]
and, by interpolation,
\[
\|\psi_\lambda\|_{L^p(M)}\lesssim_\varepsilon\,\lambda^{\frac{37}{80}-\frac{37}{40p}+\varepsilon}\|\psi_\lambda\|_{L^2(M)},
\qquad 2<p<22/7.
\]
\end{corollary}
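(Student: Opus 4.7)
The plan is to combine the constant-sectional-curvature half of Theorem~\ref{theo main4} with Hou's power-saving estimate~\eqref{eq:Hou}. Since $M=\Gamma\backslash\mathbb{H}^3$ has constant sectional curvature, the second conclusion of Theorem~\ref{theo main4} is available in the range $p\in[22/7,4]$ with $n=3$. Applied to the $L^2$-normalized Hecke--Maass form $\psi_\lambda$, it gives
\begin{equation*}
\|\psi_\lambda\|_{L^p(M)}\lesssim_\varepsilon \lambda^{1-3/p+\varepsilon}\,\|\psi_\lambda\|_{\KN(\lambda)}^{4/p-1}.
\end{equation*}

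Next I would unpack the Blair--Sogge Kakeya--Nikodym norm carefully, since its definition~\eqref{BS-microlocal-infor} contains a $\lambda^{(n-1)/4}$ normalization. For $n=3$ this reads
\[
\|\psi_\lambda\|_{\KN(\lambda)}=\lambda^{1/2}\sup_{\gamma\in\Pi}\|\psi_\lambda\|_{L^2(T_{\lambda^{-1/2}}(\gamma))},
\]
so Hou's estimate~\eqref{eq:Hou} gives $\|\psi_\lambda\|_{\KN(\lambda)}\lesssim_\varepsilon \lambda^{1/2-1/20+\varepsilon}=\lambda^{9/20+\varepsilon}$. Substituting and collecting powers of $\lambda$,
\[
1-\frac{3}{p}+\frac{9}{20}\!\left(\frac{4}{p}-1\right)=\frac{11}{20}-\frac{6}{5p},
\]
which establishes the first displayed bound on the range $22/7\le p<4$.

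For the range $2<p<22/7$, I would interpolate via H\"older between the trivial identity $\|\psi_\lambda\|_{L^2(M)}=1$ at $p=2$ (with exponent $0$) and the endpoint bound just proved at $p=22/7$, whose exponent evaluates to $11/20-42/110=37/220$. Writing $1/p=(1-t)/2+7t/22$ so that $t=11/4-11/(2p)$, the interpolated exponent becomes $t\cdot 37/220 = 37/80-37/(40p)$, which is precisely the second displayed inequality.

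The main point is that there is no genuine obstacle here: once Theorem~\ref{theo main4} is in hand in its constant-curvature refinement, and once Hou's bound~\eqref{eq:Hou} is invoked, the corollary reduces to direct substitution and a one-line interpolation. The only place requiring care is remembering the $\lambda^{(n-1)/4}$ factor built into $\|\cdot\|_{\KN(\lambda)}$, which is what converts Hou's $\lambda^{-1/20+\varepsilon}$ gain into the effective $\lambda^{9/20+\varepsilon}$ input to the sharp microlocal estimate.
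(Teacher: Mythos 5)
Your proof is correct and follows exactly the route the paper intends (the paper states only that the corollary follows by "combining Theorems~\ref{theo main4} and \eqref{eq:Hou}" and leaves the computation to the reader). You correctly instantiate the constant-curvature case of Theorem~\ref{theo main4} with $n=3$, carefully account for the $\lambda^{1/2}$ normalization built into the Kakeya--Nikodym norm~\eqref{BS-microlocal-infor} when converting Hou's bound~\eqref{eq:Hou} into $\|\psi_\lambda\|_{\KN(\lambda)}\lesssim_\varepsilon\lambda^{9/20+\varepsilon}$, and your arithmetic for both the direct range $22/7\le p<4$ and the H\"older interpolation down to $2<p<22/7$ checks out.
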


The proofs of our main theorems build on recent advances in Fourier restriction theory. 
These applications underscore the potential interplay between restriction estimates and the analysis of Hecke--Maass forms.

\medskip

\subsection{H\"ormander oscillatory integral operators} Our methods apply to  H\"ormander oscillatory integral operators. We describe some direct consequences in this section. 
Let $n\ge 2$.  We write $B_r^{m}$ for the $m$-dimensional Euclidean ball of radius $r$ centered at the origin. Let $a\in C_c^\infty(\R^n\times\R^{n-1})$ be a nonnegative amplitude with $\supp a\subset B_1^n\times B_1^{n-1}$.
 Let $\phi\colon B_1^n\times B_1^{n-1}\to\R$ be smooth and satisfy the Carleson--Sj\"olin conditions
\begin{itemize}
  \item[\H] $\rank \partial^2_{\xi x}\phi(x,\xi)=n-1$ for all $(x,\xi)\in B_1^n\times B_1^{n-1}$;
  \item[\HH] Writing
  \[
    G_0(x,\xi):=\bigwedge_{j=1}^{n-1}\partial_{\xi_j}\partial_x\phi(x,\xi),\qquad
    G(x,\xi):=\frac{G_0(x,\xi)}{|G_0(x,\xi)|},
  \]
  the curvature condition
  \[
    \det\partial^2_{\xi\xi}\big\langle \partial_x\phi(x,\xi),\,G(x,\xi_0)\big\rangle\big|_{\xi=\xi_0}\neq 0
  \]
  holds for all $(x,\xi_0)\in\supp a$.
\end{itemize}

For $\lambda\ge1$ we consider the rescaled operator
\begin{equation}\label{eq:00}
  \mathcal H^\lambda f(x):=\int_{B^{n-1}_1} e^{2\pi i\,\phi^\lambda(x,\xi)}\,a^\lambda(x,\xi)\,f(\xi)\,d\xi,
\end{equation}
where $a^\lambda(x,\xi):=a(x/\lambda,\xi)$ and $\phi^\lambda(x,\xi):=\lambda\,\phi(x/\lambda,\xi)$. 
We call $\mathcal H^\lambda$ a {\it H\"ormander operator} if $\phi$ satisfies \H and \HH. Throughout the paper, the phase $\phi$ and amplitude $a$ are considered fixed; implicit constants in estimates may depend on these data.

\smallskip 

A fundamental example is the Fourier extension operator
\begin{equation}\label{eq:02a}
  E_\Sigma f(x):=\int_{B_1^{n-1}} e^{2\pi i\,(x'\cdot\xi+x_n\psi(\xi))}\,f(\xi)\,d\xi,
\end{equation}
where $\psi$ satisfies
\[
  \rank\Big(\frac{\partial^2\psi}{\partial \xi_i\partial \xi_j}\Big)_{(n-1)\times(n-1)}=n-1.
\]
As $\psi$ parametrizes the smooth hypersurface $\Sigma=\{(\xi,\psi(\xi)):\ \xi\in B^{n-1}_1\}\subset\R^n$ with everywhere nonzero Gaussian curvature, $E_\Sigma$ is the Fourier extension (adjoint restriction) operator associated to $\Sigma$.
Historically, there has been strong interest in establishing $(q,p)$ bounds of the form
\begin{equation}\label{eq:localEfSimple}
  \|E_\Sigma f\|_{L^p(B^n_\lambda)}\lesssim_\varepsilon\lambda^{\varepsilon}\,\|f\|_{L^q(B^{n-1}_1)}.
\end{equation}
Subject to an $\varepsilon$-removal argument by Tao \cite{Tao-BR-Restriction}, Stein's restriction conjecture predicts the sharp $(q,p)$ range for which \eqref{eq:localEfSimple} holds.
\begin{conjecture}[Stein \cite{Stein-restriction}]\label{conj:steinEf}
Let $n\ge 2$ and $p\geq\tfrac{2n}{n-1}$. If $q$ satisfies $q'\le \tfrac{n-1}{n+1}\,p$, where $q'$ is the H\"older conjugate of $q$, then
\begin{equation}\label{eq:steinEf}
\text{for every $\varepsilon>0$ and $\lambda\ge 1$,}\quad
\|E_\Sigma f\|_{L^p(B^n_\lambda)}\lesssim_\varepsilon \lambda^{\varepsilon}\,\|f\|_{L^q(B^{n-1}_1)}.
\end{equation}
\end{conjecture}

\smallskip 

H\"ormander operators provide a variable-coefficient framework that underlies both the Fourier restriction phenomena and many other problems in harmonic analysis. 
Motivated by the connection to Euclidean Fourier analysis, H\"ormander \cite{hormander1973oscillatory} initiated the program of determining the optimal $(q,p)$ range for which the following estimate is true:
\begin{equation}\label{eq Hor}
  \text{for every $\varepsilon>0$ and $\lambda\ge1$,}\quad
  \|\mathcal H^\lambda f\|_{L^p(B^n_\lambda)}\lesssim_\varepsilon \lambda^{\varepsilon}\,\|f\|_{L^q(B_1^{n-1})}.
\end{equation}

We shall refer to \eqref{eq Hor} as the \emph{$(q,p)$ H\"ormander estimate} for $\mathcal H^\lambda$.
Originally, H\"ormander conjectured that \eqref{eq Hor} holds in the same $(q,p)$ range as stated in Conjecture \ref{conj:steinEf}.
If true, this would simultaneously imply both Stein's Fourier restriction conjecture and the Bochner--Riesz conjecture. 
H\"ormander proved this result when $n=2$, and in higher dimensions $n\ge3$, Stein \cite{stein1986oscillatory} established \eqref{eq Hor} in a smaller range $p\ge 2\,\frac{n+1}{n-1}$ with $q'\le \tfrac{n-1}{n+1}\,p$ (with no $\lambda^\varepsilon$ loss).
However, Bourgain \cite{bourgain1991lp} later constructed counterexamples to H\"ormander's conjecture, exhibiting the \emph{Kakeya compression phenomenon} and showing that \eqref{eq Hor} can only be true when
\begin{equation}\label{eq:Hor range1}
  p\ge \begin{cases}
    2\,\dfrac{n+1}{n-1}, & n \text{ odd},\\[6pt]
    2\,\dfrac{n+2}{n},   & n \text{ even}.
  \end{cases}
\end{equation}
This shows that Stein's result in \cite{stein1986oscillatory} is sharp in odd dimensions.
Subsequent work has largely focused on $(p,p)$ H\"ormander estimates and on extending the admissible range of $p$. On this front, for even $n$, sharp results up to the endpoint were obtained by Bourgain--Guth \cite{bourgain2011bounds}.

In this paper, we focus on the case where the phase is \emph{positive-definite} in the sense of Lee \cite{lee2006linear}:
\begin{itemize}
  \item[\HHH] For every $(x,\xi_0)\in\supp a$, all eigenvalues of
  \[
    \partial^2_{\xi\xi}\big\langle \partial_x\phi(x,\xi),\,G(x,\xi_0)\big\rangle\big|_{\xi=\xi_0}
  \]
  are strictly positive.
\end{itemize}
In the setting of Conjecture \ref{conj:steinEf}, this is the elliptic case: the hypersurface $\Sigma$ has all principal curvatures of the same sign (equivalently, its second fundamental form is definite on $\supp a$).

A key distinction between H\"ormander operators \eqref{eq:00} and Fourier extension operators \eqref{eq:02a} is the Kakeya compression phenomenon: the main contribution of $\mathcal H^\lambda f$ may concentrate in a small neighborhood of a lower-dimensional submanifold, with different manifestations in odd and even dimensions. 
Under the positive-definite hypothesis \HHH, transverse equidistribution forces any such concentration to occur in a  $\lambda^{1/2}$-neighborhood of the submanifold.
For general phases satisfying only the weaker condition \HH, however, it can compress further to a unit-scale neighborhood.
In contrast, no analogous compression is expected for the extension operator \eqref{eq:02a} if the Kakeya maximal conjecture is true. 
This mechanism gives three distinct optimal ranges of $p$. 
Assuming that $\ch^\la$ satisfies \H and \HHH, Guth, Hickman, and Iliopoulou \cite{GHI} proved $(p,p)$ H\"ormander estimates in the optimal range
\begin{equation}\label{eq:Hor range2}
  p\ge
  \begin{cases}
    2\,\dfrac{3n+1}{3n-3}, & n \text{ odd},\\[6pt]
    2\,\dfrac{3n+2}{3n-2}, & n \text{ even}.
  \end{cases}
\end{equation}
The proof in \cite{GHI}, broadly speaking, relies on a dichotomy based on the $k$-broadness of the operator $\mathcal{H}^\lambda$.
When $k$ is large, a sharp $k$-broad estimate is applied to directly obtain the $(p,p)$-estimate in the desired range \eqref{eq:Hor range2}.
When $k$ is small, the argument uses Bourgain--Demeter decoupling theorem along with the parabolic rescaling to reduce the problem to a finer scale, and completes the proof by induction.
In particular, the loss from using the Bourgain--Demeter decoupling theorem can be compensated by a gain from the parabolic rescaling.
However, to the best of our knowledge, this approach breaks down when seeking off-diagonal $(q,p)$ estimates, primarily because parabolic rescaling does not yield a compensating gain in this setting, while the decoupling theorem still incurs a critical loss.

\smallskip

{Adapting the proof of the first part of Theorem \ref{theo main5} to H\"ormander operators, and combining this with the diagonal estimate of \cite{GHI} in even dimensions, we obtain the sharp range of $(q,p)$ estimates in the positive-definite case.

\begin{theorem}[Sharp $(q,p)$ estimates for H\"ormander operators]\label{theo main1}
Let $n\ge3$ and suppose $\mathcal H^\lambda$ satisfies \H and \HHH. Then the $(q,p)$ H\"ormander estimate holds if and only if one of the following holds:
\begin{itemize}
    \item[\rm (i)] $n$ is odd, 
    \[
    p\ge 2\,\frac{3n+1}{3n-3}
    \qquad\text{and}\qquad
    q'\le \frac{n-1}{n+1}p.
    \]

    \item[\rm (ii)] $n$ is even, 
    \[
    p\ge 2\,\frac{3n+2}{3n-2}
    \]
    and
    \[
    \frac1q\le \min\biggl\{\frac{3n-2}{4}-\frac{3n}{2p},\,
    1-\frac{n+1}{(n-1)p}\biggr\}.
    \]
    Equivalently, in even dimensions this is the union of the two regions
    \[
    2\,\frac{3n+2}{3n-2}\le p\le 2\,\frac{3n+1}{3n-3},
    \qquad
    \frac1q\le \frac{3n-2}{4}-\frac{3n}{2p},
    \]
    and
    \[
    p\ge 2\,\frac{3n+1}{3n-3},
    \qquad
    q'\le \frac{n-1}{n+1}p.
    \]
\end{itemize}
\end{theorem}
Indeed, we prove  $(q,p)$ H\"ormander estimate in all dimensions under the conditions
\[
p\ge 2\,\frac{3n+1}{3n-3}
\qquad\text{and}\qquad
q'\le \frac{n-1}{n+1}p.
\]
In even dimensions, the additional range of admissible pairs $(q,p)$ is then obtained by interpolating between our endpoint estimate and the $(p,p)$ endpoint result of \cite{GHI}, which gives the range
    \[
    2\,\frac{3n+2}{3n-2}\le p\le 2\,\frac{3n+1}{3n-3},
    \qquad
    \frac1q\le \frac{3n-2}{4}-\frac{3n}{2p},
    \]
What is especially interesting is that we also show this additional condition is necessary (see Section \ref{sec example}), thereby completing the picture. See Figure \ref{fig 1}}

\begin{remark}
\rm
A similar additional condition arises in the non-elliptic case. 
In Remark \ref{bourgain-guth-example}, we show that for H\"ormander operators $\mathcal H^\lambda$ satisfying only \H\ and \HH, the range obtained by combining the Bourgain--Guth bound with the Tomas--Stein theorem indeed yields a complete solution in this setting.
\end{remark}

\begin{figure}[ht]
\centering
\begin{tikzpicture}[x=13cm,y=6.8cm]

\draw[->, thin] (0,0) -- (0,1.08) node[left] {$\frac1q$};
\draw[->, thin] (0,0) -- (0.90,0) node[below] {$\frac1p$};

\coordinate (A) at (0,0.98);
\coordinate (K) at (0.36,0.66);  
\coordinate (E) at (0.68,0.20);   
\coordinate (B) at (0.36,0);
\coordinate (C) at (0.68,0);

\fill[gray!20] (0,0) -- (A) -- (K) -- (B) -- cycle;
\fill[blue!18] (B) -- (K) -- (E) -- (C) -- cycle;

\draw[very thick] (A) -- (K) -- (E) -- (C);
\draw[very thick, gray!70] (K) -- (B);

\fill (A) circle (1.2pt);
\fill (K) circle (1.2pt);
\fill (E) circle (1.2pt);

\draw (B) -- ++(0,-0.014);
\draw (C) -- ++(0,-0.014);

\node[below=4pt] at (B)
{$\frac{3n-3}{2(3n+1)}$};

\node[below=4pt] at (C)
{$\frac{3n-2}{2(3n+2)}$};

\node[above right=1pt] at (A)
{\scriptsize $(0,1)$};

\node[above right=2pt, align=left] at (K)
{ $\left(\frac{3n-3}{2(3n+1)},\,\frac{3n-1}{2(3n+1)}\right)$\\
\scriptsize \qquad  Theorem \ref{theo main1}};

\node[above right=2pt, align=left] at (E)
{ $\left(\frac{3n-2}{2(3n+2)},\,\frac{3n-2}{2(3n+2)}\right)$\\
\scriptsize\quad \ \qquad\cite{GHI}};

\node[scale=1.2] at (0.17,0.34) {odd $n$};

\node[scale=1.05, align=center] at (0.52,0.16)
{additional range\\[1pt] for even $n$};

\end{tikzpicture}
\caption{Admissible regions in the $(\tfrac1p,\tfrac1q)$-plane. 
The gray region is the admissible range in odd dimensions, while the blue region is the additional range available in even dimensions. The figure is schematic and not drawn to scale.}
\label{fig 1}
\end{figure}

As is the case for Theorem \ref{theo main4}, we actually prove stronger microlocal Kakeya--Nikodym  estimates tailored to induction on scales. The  microlocal Kakeya--Nikodym norm can similarly be defined for a given H\"ormander operator. Again, the full definition is somewhat lengthy and is postponed to Section 2. 
Our microlocal Kakeya--Nikodym estimates are as follows.
\begin{theorem}[Microlocal Kakeya--Nikodym estimates for H\"ormander operators]\label{theo main2}
Let $n\ge3$ and let $\mathcal H^\lambda$ satisfy \H and \HHH. For every $\varepsilon>0$ and $\lambda\ge1$, and for all $2\,\frac{3n+1}{3n-3}\le p\le2\,\frac{n+1}{n-1}$,
\begin{equation}\label{eq:mic}
    \|\mathcal H^\lambda f\|_{L^p(B^n_\lambda)}\lesssim_\varepsilon\ \lambda^\varepsilon\, \|f\|_{L^2(B^{n-1}_1)}^{2-\frac{2(n+1)}{p(n-1)}}\,
    \|f\|_{\MKN(\lambda)}^{\frac{2(n+1)}{p(n-1)}-1}.
\end{equation}
\end{theorem}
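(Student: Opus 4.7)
The plan is to run an induction-on-scales argument combined with the broad--narrow dichotomy of Bourgain--Guth and Guth--Hickman--Iliopoulou \cite{GHI}, modified so that the microlocal Kakeya--Nikodym weight is propagated through every step. A first reduction: at the upper endpoint $p_1 := 2(n+1)/(n-1)$ the exponent of $\|f\|_{\MKN(\la)}$ in \eqref{eq:mic} vanishes, so the inequality degenerates to the classical Stein $L^2 \to L^{p_1}$ bound for H\"ormander operators satisfying \H and \HHH, established in \cite{stein1986oscillatory}. By real interpolation between $p_1$ and the lower endpoint $p_0 := 2(3n+1)/(3n-3)$, it suffices to prove \eqref{eq:mic} at $p=p_0$, where the \MKN weight carries the positive power $\tfrac{2}{3n+1}$.

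At $p=p_0$ I would begin by pigeonholing the scale realizing the \MKN supremum: since $\la^{-1/2+\e_0}\le s \le 1$ ranges over a dyadic set of size $O(\log\la)$, at the cost of a $\la^\e$ factor we may fix a single $s_* \in [\la^{-1/2+\e_0},1]$ such that $\|f_{T^{s_*}}\|_{L^2} \le s_*^{(n-1)/2}\|f\|_{\MKN(\la)}$ uniformly in the tubes $T^{s_*}$ of the scale-$s_*$ wave-packet decomposition developed in Section 2. I would then run the broad--narrow induction at a physical scale $K = K(\e)$: on each $K^2$-ball $B \subset B^n_\la$, partition frequency space into $K^{-1}$-caps and dichotomize whether $\|\ch^\la f\|_{L^{p_0}(B)}$ is dominated by its $k$-broad part or by a narrow cluster of caps concentrated near a codimension-one subvariety.

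In the broad case I would invoke the sharp $k$-broad estimate from \cite{GHI} at the GHI-critical $k$, which in odd dimensions controls the broad $L^{p_0}$-norm of $\ch^\la f$ by $\la^\e$ times $L^2$-type quantities of the wave-packet data. The upgrade to the \MKN-weighted form \eqref{eq:mic} is then achieved by a H\"older interpolation between the global orthogonality bound $\sum_{T^{s_*}}\|f_{T^{s_*}}\|_{L^2}^2 \lesssim \|f\|_{L^2}^2$ and the per-tube bound $\|f_{T^{s_*}}\|_{L^2} \le s_*^{(n-1)/2}\|f\|_{\MKN(\la)}$, with exponents tuned to the convex combination $\tfrac{2}{3n+1}$. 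The positive-definite hypothesis \HHH is essential precisely here: it enforces transverse equidistribution of scale-$s_*$ packets inside their $\la^{1/2}$-plates, without which the packing arithmetic driving the sharp $k$-broad bound would fail.

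The narrow case is the central obstacle. After parabolic rescaling, the narrow contribution becomes a H\"ormander operator at the reduced scale $\la K^{-2}$, to which one wishes to apply the induction hypothesis. In the diagonal $(p,p)$ setting of \cite{GHI} this step is handled via Bourgain--Demeter decoupling, whose critical $K^{O(1)}$ loss is absorbed by a matching parabolic-rescaling gain on the $L^p$ factor; as the authors emphasize in the introduction, no such gain is available in the off-diagonal $(q,p)$ setting. I would therefore bypass decoupling in this step, instead estimating the narrow contribution directly by an $\ell^2$-sum over the surviving scale-$s_*$ wave packets and then applying the induction hypothesis to the rescaled data, which still satisfies $\|\wt f\|_{\MKN(\la K^{-2})} \lesssim \|f\|_{\MKN(\la)}$ because tube-concentration is covariant under the parabolic rescaling. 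Closing this induction --- balancing the $K^{O(1)}$-budget between the broad and narrow steps so that a single $\la^\e$ suffices and the positive power $\tfrac{2}{3n+1}$ on $\|f\|_{\MKN(\la)}$ is reproduced without degradation --- is the crux of the proof, and is where the \MKN refinement over the naive $(q,p)$ H\"ormander estimate is indispensable.
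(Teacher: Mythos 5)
Your proposal takes the Bourgain--Guth/GHI broad--narrow dichotomy as its organizing principle, but the paper explicitly abandons this framework for off-diagonal $(q,p)$ estimates, and the actual proof proceeds along an entirely different route. The core mechanism in the paper is a \emph{two-ends/non-two-ends} dichotomy on the wave-packet shadings, not a broad/narrow dichotomy on frequency caps. In the non-two-ends case, each wave-packet is localized to $\lessapprox R^{\varepsilon^4}$ sub-tubes of length $R^{1-\varepsilon^2}$, and one inducts on the smaller scale $R^{1-\varepsilon^2}$ directly (Section 3.3). In the two-ends case, the crucial input is an \emph{incidence estimate} coming from the two-ends bush argument (Lemma~\ref{bush} and Proposition~\ref{pro1}), which bounds the multiplicity $\mu$ by roughly $(\#\mathbb T)^{1/2}$ on a large subset of $B_R$; this $1/2$ exponent is, as the paper emphasizes, exactly what places the heavy weight on the $L^2$ factor in \eqref{eq:mic}. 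One then feeds this multiplicity bound into the \emph{refined} decoupling inequality (Theorem~\ref{refine}), combines with the $L^2$ estimate of Lemma~\ref{lem:l2}, and interpolates via H\"older to close the induction (Section 3.4). None of this involves $k$-broad estimates, narrow-cap clusters, or parabolic rescaling.

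The genuine gap in your proposal is your narrow-case step. You correctly identify that in the $(q,p)$ setting the Bourgain--Demeter decoupling loss cannot be absorbed by parabolic rescaling, and you propose to "bypass decoupling... estimating the narrow contribution directly by an $\ell^2$-sum over the surviving scale-$s_*$ wave packets and then applying the induction hypothesis." But this is exactly where a concrete mechanism is required and none is given: an $\ell^2$-sum over wave packets does not yield an $L^{p_0}$ bound unless one has a decoupling-type inequality at some scale, and "applying the induction hypothesis to the rescaled data" reintroduces the same difficulty at a smaller scale without resolving it. Since the broad--narrow loop cannot close without addressing this, and the paper's own remedy is a structurally different dichotomy plus an incidence-theoretic bound feeding a refined (not Bourgain--Demeter) decoupling, your proposal does not constitute a proof. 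A secondary issue: your "pigeonholing of the scale $s_*$" is superfluous, since the definition of $\|f\|_{\MKN(\lambda)}$ already furnishes the per-tube bound $\|f_{T^s}\|_{L^2}\le s^{(n-1)/2}\|f\|_{\MKN(\lambda)}$ uniformly over all scales $s$ and tubes $T^s$; the paper simply works at the finest scale $s=R^{-1/2+\varepsilon_0}$ and invokes Lemma~\ref{KN-L2-lem} to convert a tube count into the mixed $L^2$--$\MKN$ form.
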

{\begin{remark}\label{rem MKN implies p,q}
As noted above, Theorem \ref{theo main2} yields Theorem \ref{theo main1} on the scaling line $q'=\tfrac{n-1}{n+1}\,p$. Indeed, observe that 
\[
\|f\|_{\MKN(\lambda)}\lesssim \|f\|_{L^\infty(B_1^{n-1})}.
\]
Taking $f=\mathbf 1_E$ in \eqref{eq:mic} gives the restricted-type $(q,p)$ bound at the endpoint $q'=\tfrac{n-1}{n+1}\,p$. A standard level-set decomposition then upgrades this to the strong $(q,p)$ estimate \eqref{eq Hor} at the endpoint. This endpoint bound then implies the full range $q'\le \tfrac{n-1}{n+1}\,p$ appearing in Theorem \ref{theo main1} by H\"older's inequality on the bounded domain $B^{n-1}_1$.

Noting that Theorem \ref{theo main1} is sharp for all dimensions on the line $q'=\tfrac{n-1}{n+1}\,p$, (see Section \ref{sec example}), the threshold $p\ge 2\,\tfrac{3n+1}{3n-3}$ in Theorem \ref{theo main2} is therefore optimal in all dimensions and cannot be lowered. For the same reason, the first part of Theorem \ref{theo main5} is sharp as well.
\end{remark}}

\begin{remark}
The mixed norm on the right-hand side of \eqref{eq:mic} consists of a smaller $L^2$ norm and a larger microlocal Kakeya--Nikodym norm. One may ask whether the exponent on the larger norm $\|\cdot\|_{\MKN(\lambda)}$ can be lowered. However, as the Knapp example shows, this is impossible.
\end{remark}

\smallskip 

\subsection{Fourier extension and Bochner--Riesz means} Our next application concerns the translation-invariant extension operator $E_\Sigma$ associated with a strictly convex $\Sigma$. In this case stronger bounds than \eqref{eq:mic} are expected. However, as in the second part of Theorem \ref{theo main5}, these improvements cannot be obtained using the same microlocal Kakeya--Nikodym norm. We refine Theorem \ref{theo main2} in the case $\mathcal H^\lambda=E_\Sigma$ using the anisotropic variant $\|{}\cdot{}\|_{\widetilde{\MKN}}$ of the microlocal Kakeya--Nikodym norm.

We briefly describe this norm again in the Fourier extension setting for the convenience of the reader. Fix $\varepsilon_0=\varepsilon_0(\varepsilon)\in(0,1)$. Let $\vec{s}=(s_1,\dots,s_{n-1})\in\mathbb{R}^{n-1}$ with each $s_i$ dyadic and $\lambda^{-1/2+\varepsilon_0}\le s_i\le 1$. At anisotropic scale $\vec s$ take a decomposition
\[
  f=\sum_{\Box_{\vec s}} f_{\Box_{\vec s}},
\]
so that each $E_\Sigma f_{\Box_{\vec s}}$ is microlocalized to a rectangular box in physical space of dimensions
\[
  \lambda \times s_1\lambda \times s_2\lambda \times \cdots \times s_{n-1}\lambda.
\]
Since in this case all wave packets propagate along straight lines, there is no ambiguity in this definition. We then define the anisotropic microlocal Kakeya--Nikodym norm to be the supremum over all $L^2$ averages of such anisotropic packets:
\begin{equation}
\label{anisotropic-KN-norm}
    \|f\|_{\widetilde{\MKN}(\lambda)}
    :=\sup_{\vec s\in[\lambda^{-1/2+\varepsilon_0},1]^{n-1}}\ \sup_{\Box_{\vec s}} \ \prod_{i=1}^{n-1}s_i^{-1/2}\,\|f_{\Box_{\vec s}}\|_{L^2(B^{n-1}_1)}.
\end{equation}
We obtain the following estimates for $E_\Sigma$.

\begin{theorem}[Microlocal Kakeya--Nikodym estimates for extension operators]\label{theo main3}
Let $n\ge 3$, $2\,\tfrac{3n+2}{3n-2}\le p\le2\,\frac{n+1}{n-1}$, and $\Sigma$ strictly convex. Then for every $\varepsilon>0$ and $\lambda\ge1$,
\begin{equation}\label{eq:MKNE}
        \|E_\Sigma f\|_{L^p(B^n_\lambda)}\lesssim_\varepsilon \lambda^\varepsilon \|f\|_{L^2(B_1^{n-1})}^{2-\tfrac{2(n+1)}{p(n-1)}}\,
\|f\|_{\widetilde{\MKN}(\lambda)}^{\tfrac{2(n+1)}{p(n-1)}-1}.
    \end{equation}
\end{theorem}
Similar to Remark \ref{rem MKN implies p,q}, Theorem \ref{theo main3} implies the corresponding $(q,p)$ H\"ormander estimates for the extension operator.

\begin{corollary}[$(q,p)$ estimates for extension operators]\label{cor extension}
    Let $n\ge 3$ and $\Sigma$ strictly convex. Then Conjecture \ref{conj:steinEf} holds for all $p\ge 2\,\tfrac{3n+2}{3n-2}$ and $q'\le \tfrac{n-1}{n+1}\,p$.
\end{corollary}

Corollary \ref{cor extension} is new in all dimensions $n\ge 3$. In particular, in low dimensions, it matches the state-of-the-art $(p,p)$ results for the Fourier extension operator in \cite{Wang-Wu} in the range of $p$ and upgrades them to $(q,p)$ estimates admitting the sharp range of $q$. For $n\ge 4$, progress on the sharp $(q,p)$ restriction problem has largely stagnated. To date, the only effective method is due to \cite{Lee-Rogers-Seeger}, building on the bilinear estimate of \cite{tao2003sharp}. When $n=3$, Shayya \cite{shayya2017weighted} proves \eqref{eq:steinEf} for the smaller range $p\ge 3.25$. It is possible that modifications to our approach could lead to further improvements in high dimensions. See Remark \ref{p-q-rmk}. 

\smallskip 

At this point, it is natural to ask for the maximal range of exponents $p$ for which \eqref{eq:MKNE} holds.
The resolution of the three-dimensional Kakeya set conjecture in \cite{Wang-Zahl} suggests that the only obstruction to thin tubes being essentially disjoint arises when they are densely packed within convex sets.
Among other things, the anisotropic Kakeya--Nikodym norm \eqref{anisotropic-KN-norm} rules out this possibility (see Lemma \ref{KN-L2-lem2}).
This leads us to the following conjecture, which, if true, would imply Conjecture \ref{conj:steinEf} for $n=3$.

\begin{conjecture}\label{conj MKNE} 
Let $n=3$ and $\Sigma$ be strictly convex, then \eqref{eq:MKNE} holds for all $p\ge 3$.
\end{conjecture}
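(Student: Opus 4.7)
The natural approach is to implement the Guth broad/narrow induction on scales in the spirit of \cite{GHI} and of Theorem \ref{theo main3}, but with the three--dimensional Kakeya input upgraded by means of the Wang--Zahl resolution \cite{Wang-Zahl} and with the isotropic $L^2$ factor on the right of \eqref{eq:MKNE} replaced everywhere by the anisotropic $\widetilde{\MKN}$ norm. I would first perform a wave--packet decomposition $f=\sum_T f_T$ at scale $\lambda$ and a frequency partition into caps of angular size $K^{-1}$ with $K=\lambda^{\delta}$. At each spatial ball $B_{K^2}\subset\R^3$ the contribution splits into a narrow part, where the three caps carrying most of $E_\Sigma f(x)$ essentially lie in a $K^{-1}$-neighborhood of a $2$-plane, and a broad part.

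In the narrow case, after a parabolic rescaling adapted to the coplanar configuration, the problem reduces to a smaller oscillation scale and the induction hypothesis applies. A key point is that the anisotropic $\widetilde\MKN$ norm is compatible with non--isotropic rescaling: each $s_i$ rescales independently in \eqref{anisotropic-KN-norm}, so the induction closes without loss even at the endpoint $p=3$.

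In the broad case one applies the Bennett--Carbery--Tao multilinear restriction estimate to obtain a broad $L^3$ bound. The crucial step is to convert this into the sharp endpoint estimate \eqref{eq:MKNE} at $p=3$ via a refined decoupling / refined $L^2$ argument. Here the Wang--Zahl theorem enters in the following form: an essentially distinct family of $\lambda^{1/2}$-tubes in $\R^3$ can fail to be quasi--disjoint only by concentrating in a convex set, and up to $\lambda^\varepsilon$ losses any such convex set is comparable to a rectangular box of dimensions $\lambda\times s_1\lambda\times s_2\lambda$. On such a box the definition of $\widetilde\MKN$ forces the mass of the associated wave packets to be at most of order $(s_1s_2)^{1/2}\|f\|_{\widetilde\MKN(\lambda)}$, which is exactly the gain required at the $p=3$ endpoint. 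A pigeonhole over the anisotropic aspect ratios $\vec s=(s_1,s_2)$ costs at most $(\log\lambda)^{O(1)}$ and is absorbed into $\lambda^\varepsilon$. Once \eqref{eq:MKNE} is established at $p=3$, interpolation with Theorem \ref{theo main3} at $p=\tfrac{22}{7}$ yields the full range $p\ge 3$.

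The main obstacle is turning the Wang--Zahl theorem, currently phrased in terms of line incidences and Kakeya sets, into a usable \emph{two--parameter refined decoupling} inequality in $\R^3$ that simultaneously resolves angular scales and convex--packing aspect ratios. Threading such a refined decoupling through the broad/narrow induction, while preserving the delicate $\widetilde\MKN$ factor on the right of \eqref{eq:MKNE} and keeping all pigeonhole losses logarithmic, appears to require genuinely new ideas beyond those in \cite{GHI,Wang-Wu,Wang-Zahl}; this is the step I expect to be the principal difficulty.
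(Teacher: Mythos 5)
This statement is labeled a \emph{Conjecture} in the paper and is not proved there; it is offered as a target which, if true, would imply Stein's restriction conjecture in $\R^3$. So there is no proof in the paper to compare against, and your proposal is not a proof either: you yourself identify the decisive step --- turning the Wang--Zahl Kakeya theorem into a quantitative, two-parameter refined-decoupling or incidence input compatible with the $\widetilde{\MKN}$ norm --- as requiring ``genuinely new ideas.'' That is precisely the gap that keeps the statement conjectural, and your sketch mirrors the heuristic the paper itself gives in the paragraph immediately preceding Conjecture~\ref{conj MKNE} (Wang--Zahl says convex packing is the only obstruction; Lemma~\ref{KN-L2-lem2} says the anisotropic norm penalizes exactly such packing) without closing it.

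A further point worth noting is that the skeleton you propose is not the one the paper actually uses for Theorem~\ref{theo main3}. Proposition~\ref{prop main2} is \emph{not} proved by a Bourgain--Guth broad/narrow dichotomy with multilinear restriction; it is proved by a two-ends pigeonholing, refined decoupling (Theorem~\ref{refine}) with the multiplicity $\mu$ chosen as in \eqref{mu}, and the incidence bound of Lemma~\ref{incidence-lem}, which in turn rests on Wolff's hairbrush and the Wang--Wu two-ends Furstenberg estimate (Theorem~\ref{two-ends-furstenberg}). The exponent $n/(2n-2)=3/4$ in Lemma~\ref{incidence-lem} is exactly what pins the threshold at $p=22/7$; the natural way to reach $p=3$ inside this framework is to upgrade that incidence lemma to Kakeya strength, not to rebuild the argument around BCT. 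Two smaller issues: the interpolation endpoint should be $p=4$ (Lemma~\ref{Stein}), not $p=22/7$, to cover $3\le p\le 4$, and for $p>4$ the $\widetilde{\MKN}$ exponent in \eqref{eq:MKNE} is negative so the claim there is essentially the $p=4$ bound in disguise; also, the claim that the anisotropic $\widetilde{\MKN}$ norm ``closes the induction without loss'' under the parabolic rescaling of the narrow case is asserted rather than checked, and this stability is nontrivial precisely because the narrow rescaling is anisotropic in a direction chosen by the data, not by the norm. The paper's two-ends scheme sidesteps this by inducting on balls, not on rescaled operators, so if you want to pursue the broad/narrow route you would need to verify this compatibility from scratch.
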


\begin{remark}
\label{algebraic-rmk}
   Geometry becomes more intricate in higher dimensions.
In particular, being densely packed within convex sets is no longer the only obstruction to thin tubes being essentially disjoint---they can also be easily arranged within a thin neighborhood of a real algebraic set.
See, for example, \cite[Section 0.5]{Wang-Wu}.
As a result, the current anisotropic Kakeya--Nikodym norm \eqref{anisotropic-KN-norm} is likely not optimal for studying higher-dimensional microlocal Kakeya--Nikodym estimates for extension operators.
Instead, one probably needs a variant of \eqref{anisotropic-KN-norm} in which the supremum is taken over all wave-packets contained within neighborhoods of low-degree real algebraic sets.
In this case, one expects the corresponding Lemma \ref{KN-L2-lem2} will help enforce a polynomial Wolff axiom (see \cite[Definition 0.14]{Wang-Wu}) on the set of tubes.

\end{remark}

Note that, by setting $s_1=\cdots=s_{n-1}$, the $\widetilde{\MKN}$ norm reduces to the usual microlocal Kakeya--Nikodym norm, and \eqref{eq:MKNE} reduces to
\begin{equation}\label{eq:MKNE'}
        \|E_\Sigma f\|_{L^p(B_\lambda^n)}\lesssim_\varepsilon \lambda^\varepsilon \|f\|_{L^2(B_1^{n-1})}^{2-\tfrac{2(n+1)}{p(n-1)}}\,
\|f\|_{\MKN(\lambda)}^{\tfrac{2(n+1)}{p(n-1)}-1}.
\end{equation}
We will give an example showing that \eqref{eq:MKNE'} cannot hold beyond the range \eqref{eq:Hor range2}, thereby demonstrating the necessity of the anisotropic $\widetilde{\MKN}$ norm. 
See Example \ref{ex1} for details.

\medskip 

Finally, as a last application, Theorem \ref{theo main5} yields an improved range for the Bochner--Riesz multipliers. In the flat case $d_g(x,y)=|x-y|$, the spectral projector bound \eqref{eq:mica} implies, via Remark \ref{rem MKN implies p,q} and standard reductions (see \cite{carleson1972oscillatory,hormander1973oscillatory,Stein-real-methods,GHI}), sharp $L^p$ bounds for the Bochner--Riesz operator $m^\delta(D)$, defined by
\[
m^\delta(D)f(x)=\int_{\mathbb R^n} e^{i\langle x,\xi\rangle}\,m^\delta(\xi)\,\widehat f(\xi)\,d\xi,
\qquad
m^\delta(\xi)=(1-|\xi|^2)_+^\delta.
\]
\begin{corollary}[Improved Bochner--Riesz bounds in $\mathbb R^3$]\label{cor bochner}
The Bochner--Riesz operator $m^\delta(D)$ is bounded on $L^p(\mathbb R^3)$ in the optimal range 
\[
\delta>3\,\Bigl|\frac{1}{2}-\frac{1}{p}\Bigr|-\frac{1}{2}
\]
whenever $\max\{p,p'\}\ge 22/7$, where $p'$ is the H\"older conjugate of $p$.
\end{corollary}

\medskip

\subsection{Discussion and remarks}\label{subsec:remark}
We give further discussion of the microlocal Kakeya--Nikodym norm in both its isotropic and anisotropic forms, and provide several additional remarks along the way.

As noted above, the isotropic microlocal Kakeya--Nikodym norm was introduced by Blair and Sogge \cite{blair2015refined} in the study of spectral projectors. 
Around the same time, Guth \cite{guth2016restriction} introduced a related mixed norm in the Fourier restriction setting, which, in our notation, may be written as
\begin{equation*}
    \|f\|_2^2\,\sup_{\theta}\|f_\theta\|_{L^2_{\rm avg}}^{\,p-2},
\end{equation*}
where $\theta$ ranges over frequency caps of radius $\lambda^{-1/2}$, and
\begin{equation}\label{eq:L2avg}
      \|f_\theta\|_{L^2_{\rm avg}}
    :=\Big(\frac{1}{|\theta|}\int_{\theta}|f_\theta(\xi)|^2\,d\xi\Big)^{1/2},
    \qquad f_\theta=f\,\mathbf 1_\theta.
\end{equation}
Inspired by Guth's mixed norm, Gan and the second author, in their work on local smoothing estimates for wave equations \cite{gan2025local}, introduced the concept of ``wave-packet density,'' which turns out to be closely related to the isotropic microlocal Kakeya--Nikodym norm. 

The $L^2$ average $\sup_{\theta}\|f_\theta\|_{L^2_{\rm avg}}$ works well in the Fourier restriction problem.
In particular, it serves as an effective substitute for the $L^\infty$ norm and behaves well under induction on scales.
However, in the context of spectral projectors, it is less effective, as it fails to detect concentration along a single tube in the sense of \eqref{BS-microlocal-infor}.
By contrast, the microlocal profile in \eqref{BS-microlocal-infor} itself is essentially $L^\infty$-type and captures such concentration, but it interacts poorly with induction on scales.
Although Blair and Sogge did not use an induction-on-scales argument in their works, the microlocal Kakeya--Nikodym norm $\|\cdot\|_{\MKN(\lambda)}$ fits well within this scheme: it retains the scale-stable advantages of \eqref{eq:L2avg} while still detecting the concentration measured by \eqref{BS-microlocal-infor}. 
The isotropic microlocal Kakeya--Nikodym norm thus appears to provide a natural framework for studying spectral projectors for general manifolds.

The anisotropic refinement of the microlocal Kakeya--Nikodym norm introduced in this paper arises naturally in the context of Euclidean geometry.
In particular, it is essential for the constant-sectional-curvature parts of Theorems \ref{theo main4} and \ref{theo main5}, and it serves to distinguish Euclidean geometry from Riemannian geometry.
Isotropic microlocal Kakeya--Nikodym norms miss the relevant multiscale geometry. The anisotropic variant aligns with the Kakeya-compression picture: obstructions arise when tubes cluster near lower-dimensional sets, and the norm is designed to register concentration along subspaces of any intermediate dimension. 
However, an anisotropic microlocal Kakeya--Nikodym norm is not naturally defined for general phases. For the spectral projectors on a Riemannian manifold, for example, anisotropic microlocal Kakeya--Nikodym averages would need to be taken over neighborhoods of many totally geodesic submanifolds; the existence of such families already requires strong geometric conditions.
In contrast, as observed in \cite{gao2025curved}, on a manifold of constant sectional curvature there are local diffeomorphisms $\Psi\colon U\to\mathbb R^n$ that straighten geodesics to lines and, more generally, map totally geodesic submanifolds to affine subspaces. Consequently, on each such chart we may define the anisotropic microlocal Kakeya--Nikodym norm exactly as in the Euclidean case, with no ambiguity. More importantly, Euclidean local incidence bounds transfer to this setting, yielding the favorable estimates needed to prove \eqref{eq:mica}.

Further refinement on the current anisotropic microlocal Kakeya--Nikodym norm seems possible. 
See Remark \ref{algebraic-rmk}.
We hope the approach developed here sheds light on future investigations.

\begin{remark}
Riemannian manifolds with constant sectional curvature are closely related to phase functions satisfying \emph{Bourgain's condition}, introduced in \cite{guo2024dichotomy} for H\"ormander operators. 
In the special case where the phase comes from the Riemannian distance function, Dai, Gong, Guo, and Zhang \cite{DaiGongGuoZhang:CJM:2024} proved that Bourgain's condition holds if and only if the manifold has constant sectional curvature.
\end{remark}

Finally, we make two remarks on the higher-dimensional analogue of Corollaries \ref{cor extension} and \ref{cor bochner} and possible improvements.

\begin{remark}
\label{p-q-rmk}
\rm
It is possible to prove the optimal $(q,p)$ estimate for the Fourier extension operator for exponents $p$ matching the higher-dimensional state-of-the-art $(p,p)$ results in \cite{Wang-Wu}, by using the Katz--Tao incidence estimate (see \cite[Theorem 3.5]{Wang-Wu}) and by considering a larger mixed norm
\begin{equation}
    \|f\|_{L^2(B_\lambda^{n-1})}^{\,2-\tfrac{2(n+1)}{p(n-1)}}\,
    \sup_{\theta}\|f_\theta\|_{L^2_{\rm avg}}^{\,\tfrac{2(n+1)}{p(n-1)}-1},
\end{equation}
on the right-hand side of \eqref{eq:MKNE}, where $\theta$ ranges over frequency caps of radius $\lambda^{-1/2}$ and $\|{}\cdot{}\|_{L^2_{\rm avg}}$ is as in \eqref{eq:L2avg}. 
\end{remark}

\begin{remark}
\rm
By the reduction in \cite{GOWWZ}, it is possible to obtain the optimal $L^p$ estimate for the Bochner--Riesz operator for exponents $p$ matching the higher-dimensional state-of-the-art $L^p$ estimate for the Fourier extension operator established in \cite{Wang-Wu}.
\end{remark}

\smallskip

\subsection*{Structure of the paper}
Section \ref{sec wave} constructs the wave-packet decompositions and records several auxiliary results. Section \ref{sec general} proves the microlocal Kakeya--Nikodym bounds via induction on scales and incidence estimates. Section \ref{sec constantcurv} establishes the corresponding results for the Fourier extension operator and for spectral projectors on manifolds of constant sectional curvature. Section \ref{sec example} provides the sharpness examples and completes the proof of the necessity statements in Theorem \ref{theo main1}.

\subsection*{Notation}
$B_r^m(x)$ is the $m$-dimensional Euclidean ball of radius $r$ centered at $x$. When the center is irrelevant (and the dimension is clear), we abbreviate $B_r^m$ or simply $B_r$. Let $w_{B_r}$ be a Schwartz weight adapted to $B_r$: $w_{B_r}\ge 1$ on $B_r$, and it decays rapidly off $B_r$.

For a set $E\in\mathbb R^m$ and $r>0$, we use $N_r(E)$ to denote the $r$-neighborhood of $E$ in $\mathbb R^m$.

For nonnegative quantities $A,B$ and a parameter $a$, write $A\lesssim B$ if there exists an absolute constant $C>0$ such that $A\le CB$; write $A\sim B$ if $A\lesssim B$ and $B\lesssim A$; and write $A\lesssim_a B$ if there exists a constant $C_a>0$ such that $A\le C_a B$.

For nonnegative functions $A(R),B(R)$ with $R\ge 1$, write $A(R)\lessapprox_\varepsilon B(R)$ if for any $\varepsilon>0$, there exists $C_\varepsilon>0$  such that $A(R)\le C_\varepsilon R^\varepsilon B(R)$ for all $R\ge 1$.

We use ${\rm RapDec}(R)$ for any quantity that decays faster than any power of $R$: for every $N\in\N$ there exists $C_N$ with $|{\rm RapDec}(R)|\le C_N R^{-N}$.

\subsection*{Acknowledgment} This project was partly supported by the National Key R\&D Program of China under Grant No. 2022YFA1007200 and 2024\hspace{0pt}YFA\hspace{0pt}1015400. C. Gao was supported by the Natural Science Foundation of China under Grant No. 12301121.
S. Wu was partially supported by NSF2453583.
Y. Xi was partially supported by the Natural Science Foundation of China under Grant No. 12571107 and 12171424 and the Zhejiang Provincial Natural Science Foundation of China under Grant No. LR25A010001. 
The authors are grateful to Jonathan Hickman for insightful comments and suggestions. They also thank Betsy Stovall for historical information on sharp $(q,p)$ estimates and Diankun Liu for carefully reading a draft of the paper.

\bigskip

\section{wave-packet decompositions and preliminaries}\label{sec wave}
In this section, we perform the wave-packet decomposition separately for $n\times(n-1)$ H\"ormander oscillatory integral operators satisfying the positive-definite Carleson--Sj\"olin conditions on $\mathbb{R}^n\times\mathbb{R}^{n-1}$ and for the approximate spectral projection operator on $\mathbb{R}^n\times\mathbb{R}^n$. The former follows the Guth--Hickman--Iliopoulou approach \cite{GHI}, in which one partitions frequency space into caps of diameter $\lambda^{-1/2+\e_0}$ and constructs wave-packets localized in corresponding curved tubes. The latter follows Blair--Sogge's wave-packet decomposition \cite{blair2015refined}, employing geodesic normal coordinates and a spherical cap decomposition to build wave-packets supported in $\lambda^{-1/2+\e_0}$-tubes around geodesics, reflecting the microlocal structure of the phase $d_g(x,y)$. 
In the study of spectral projectors, a standard device is to freeze a suitable component of $y$, thereby reducing the $n\times n$ phase $d_g(x,y)$ to an $n\times(n-1)$ H\"ormander-type phase (see, e.g., \cite{burq2007restrictions,gao2024refined}). However, this reduction is incompatible with the Kakeya--Nikodym norm. We therefore perform the wave-packet decomposition in the original $n\times n$ framework.

Once both decompositions are in place, we adopt a unified notation for the wave-packets in contexts where no ambiguity arises. The refined decoupling estimates, which depend only on the microlocal profiles of these packets (see \cite{beltran2020variable, iosevich2022microlocal, gan2025local}), thus apply identically to both settings. Consequently, the incidence-theoretic arguments in later sections treat the $n\times(n-1)$ and $n\times n$ phases on the same footing, and Theorems \ref{theo main2}, \ref{theo main3}, and \ref{theo main5} follow from a single unified proof.

\medskip 

\subsection{H\"ormander operators}
Let $n\ge2$ and let $a\in C_c^\infty(\mathbb{R}^n\times\mathbb{R}^{n-1})$ be nonnegative with $\supp a\subset B_1^n(0)\times B_1^{n-1}(0)$.  Let $\phi\colon B_1^n(0)\times B_1^{n-1}(0)\to\mathbb{R}$ satisfy the positive-definite Carleson--Sj\"olin conditions:
\begin{itemize}
  \item[\H] $\rank\partial^2_{\xi x}\phi(x,\xi)=n-1$ for all $(x,\xi)\in B_1^n(0)\times B_1^{n-1}(0)$;
  \item[\HHH] Writing 
  \[
    G_0(x,\xi)=\bigwedge_{j=1}^{n-1}\partial_{\xi_j}\partial_x\phi(x,\xi),
    \quad
    G(x,\xi)=\frac{G_0(x,\xi)}{|G_0(x,\xi)|},
  \]
  the Hessian 
  \[
    \partial^2_{\xi\xi}\langle\partial_x\phi(x,\xi),G(x,\xi_0)\rangle
    \bigl|_{\xi=\xi_0}
  \]
  is positive-definite for each $(x,\xi_0)\in\supp a$.
\end{itemize}
For $\lambda\ge1$ define
\[
\mathcal H^\lambda f(x)
=\int_{B_1^{n-1}(0)}e^{2\pi i\,\phi^\lambda(x,\xi)}\,a^\lambda(x,\xi)\,f(\xi)\,d\xi,
\]
where $a^\lambda(x,\xi)=a(x/\lambda,\xi)$ and $\phi^\lambda(x,\xi)=\lambda\phi(x/\lambda,\xi)$.
As is standard in induction-on-scales arguments for variable-coefficient problems, we introduce an intermediate scale $1\le R\le \lambda$ to run the multiscale scheme. 

Throughout this section, we fix a small number 
\begin{equation}
\label{eps_0}
    \e_0>0
\end{equation}
to be specified (later, we will choose $\e_0=\e^{1000}$, where $\e>0$ is the given number in the statement of Propositions \ref{prop main1} and \ref{prop main2}).
Let $s$ be a dyadic number with
\[
R^{-\tfrac12+\varepsilon_0}\le s\le 1.
\]
Let $0\le \beta\in C_0^\infty(\mathbb R^{n-1})$ satisfy
\[
\sum_{\nu\in\mathbb Z^{n-1}} \beta(z+\nu)=1,\quad \operatorname{supp}\beta\subset \{x\in\mathbb R^{n-1}: |x|\le 2\}.
\]
Decompose frequency space $\mathbb R^{n-1}$ into cubes $\{\theta\}$ by choosing centers $\xi_\theta\in s\mathbb Z^{n-1}$ and defining
\[
f_\theta(\xi)=\beta(s^{-1}(\xi-\xi_\theta)) f(\xi).
\]
For each $v\in s \mathbb Z^{n-1}$ set
\[
Q^{R,s}_{\theta,v}f(\xi)
=\bigl[(f_\theta)^\vee({}\cdot{})\,\beta(s^{-1}R^{-1}({}\cdot{}-Rv))\bigr]^\wedge(\xi).
\]
Then 
\[
f=\sum_\theta\sum_v Q^{R,s}_{\theta,v}f,
\]
and since the frequency and spatial cutoffs have bounded overlap, Plancherel gives
\begin{equation}
\nonumber
\|f\|_{L^2}^2\sim \sum_{\theta,v}\|Q^{R,s}_{\theta,v}f\|_{L^2}^2,
\end{equation}
with implicit constants depending only on the dimension and on $\beta$.

Let $ \xi_\theta$ be the center of $ \theta$. By the Carleson--Sj\"olin condition, we write $ x=(x',x_n)$ with $ x'\in \mathbb R^{n-1}$ and $ x_n\in \mathbb R$, such that there is a neighborhood of $0$ in $ \mathbb R^{n-1}$,where for each $ w$ with $ |w|\le 1$ and each $ |x_n|\le 1$ one can solve uniquely for a smooth function $ \gamma_\theta(w,x_n)\in \mathbb R^{n-1}$ satisfying
\[
\partial_\xi\phi\bigl((\gamma_\theta(w,x_n),x_n),\xi_\theta\bigr)= w.
\]
Fix a ball $B_R\subset B_\lambda$ with $1\le R\le \lambda$. Without loss of generality, we may assume $B_R$ is centered at the origin. For $v$ with $|v|\le 1$ define the rescaled curve
\[
\gamma^\lambda_{\theta}(v,x_n):=\lambda\,\gamma_\theta(Rv,x_n/\lambda),
\]
which lies in $B_R\subset B_\lambda$. Consider the tubular neighborhood of $\gamma^\lambda_{\theta}$ in $B_R$,
\[
T^{R,s}_{\theta,v}
=\{(x',x_n)\in B_R:\ |x'-\gamma^\lambda_{\theta}(v,x_n)|\le C_0\,sR,\ |x_n|\le R\}
,
\]
where $C_0>0$ is a large absolute constant that only depends on $\phi$, and will be chosen later.

\begin{lemma}[Wave-packet support]\label{lem:wp_support}
For $x\in B_R\setminus T^{R,s}_{\theta,v}$,
\[
|\mathcal H^\lambda (Q^{R,s}_{\theta,v}f)(x)|={\rm RapDec}(R)\|f\|_{L^2},
\]
uniformly for $R^{-\tfrac12+\varepsilon_0}\le s\le 1$.

\end{lemma}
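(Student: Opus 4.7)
The plan is to prove this as a standard wave-packet support lemma via a non-stationary-phase argument. First I would unravel the Fourier transforms in the definition of $Q^{R,s}_{\theta,v}$ to write
\[
\mathcal H^\lambda(Q^{R,s}_{\theta,v}f)(x) \;=\; \int\!\!\int e^{2\pi i\,[\phi^\lambda(x,\xi)\,-\,y\cdot\xi]}\,a^\lambda(x,\xi)\,\widetilde\beta\!\bigl(\tfrac{\xi-\xi_\theta}{s}\bigr)\,\beta\!\bigl(\tfrac{y-Rv}{sR}\bigr)\,f(\xi)\,dy\,d\xi
\]
(up to Fourier sign conventions and a harmless $Rv$-modulation), where $\widetilde\beta$ is a bump adapted to $\theta$. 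Rescaling to dimensionless variables $\omega=s^{-1}(\xi-\xi_\theta)$ and $\tau=(sR)^{-1}(y-Rv)$, the integrand becomes effectively supported in a bounded $(\omega,\tau)$-region, and Taylor-expanding $\phi^\lambda$ in $\xi$ about $\xi_\theta$ shows that the $\omega$-gradient of the phase is
\[
\partial_\xi\phi^\lambda(x,\xi_\theta) - \text{(effective center)} + O(s\lambda),
\]
the quadratic remainder coming from $\|\partial^2_{\xi\xi}\phi^\lambda\|=O(\lambda)$.

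The geometric content of \H is that, for each fixed $x_n$, the map $x'\mapsto \partial_\xi\phi(x/\lambda,\xi_\theta)$ is a local $C^\infty$-diffeomorphism with Jacobian and inverse bounds depending only on $\phi$. The defining equation $\partial_\xi\phi((\gamma_\theta(w,x_n),x_n),\xi_\theta)=w$ identifies the curve $x'=\gamma_\theta^\lambda(v,x_n)$ as exactly the preimage of the effective center. Consequently, the tube-exclusion $|x'-\gamma_\theta^\lambda(v,x_n)|>C_0\,sR$ for $x\in B_R\setminus T^{R,s}_{\theta,v}$ translates, via the inverse function theorem, into a lower bound
\[
\bigl|\partial_\xi\phi^\lambda(x,\xi_\theta) - \text{effective center}\bigr| \;\gtrsim\; c\,C_0\,sR
\]
with $c=c(\phi)>0$. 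Taking $C_0$ large enough (depending only on $\phi$) ensures this main term dominates both the $O(s\lambda)$ Taylor error in the gradient and the spread from the $\tau$-cutoff, so that the full $(\omega,\tau)$-phase gradient is bounded below by $\gtrsim sR$ on the effective support of the amplitude.

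With this lower bound in hand, I would apply repeated integration by parts in $(\omega,\tau)$. Each round produces a factor $(sR)^{-1}$ and bounded derivatives of the smooth amplitude; since $s\ge R^{-1/2+\varepsilon_0}$ gives $sR\ge R^{1/2+\varepsilon_0}$, $N$ iterations yield a gain of $R^{-N(1/2+\varepsilon_0)}$, which is ${\rm RapDec}(R)$ for $N$ sufficiently large. A final Cauchy--Schwarz in $\xi$ against $f$, using that the $\xi$-support has measure $O(s^{n-1})\le 1$, yields the claimed bound $|\mathcal H^\lambda(Q^{R,s}_{\theta,v}f)(x)|={\rm RapDec}(R)\,\|f\|_{L^2}$ uniformly in $s\in[R^{-1/2+\varepsilon_0},1]$. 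The main obstacle is that the quadratic Taylor error $O(s\lambda)$ in the phase gradient can exceed the target $sR$ when $R\ll\lambda$; the resolution is that the gain per integration by parts depends on the rescaled-cutoff scale $sR$ alone (the amplitude being Schwartz-localized at that scale), and the slack $\varepsilon_0>0$ in $s\ge R^{-1/2+\varepsilon_0}$ ensures that arbitrarily many rounds of integration by parts still outperform any polynomial in $R$.
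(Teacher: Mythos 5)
Your overall strategy — unravel the frequency and spatial cutoffs, rescale to dimensionless $(\omega,\tau)$, establish a phase-gradient lower bound $\gtrsim sR$, and iterate nonstationary-phase integration by parts — is sound and parallels the paper's proof in spirit. But the step you flag as ``the main obstacle'' is where the argument has a genuine gap, and the resolution you offer does not repair it.

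You bound the Taylor remainder in the $\xi$-gradient by $O(s\lambda)$ via $\|\partial^2_{\xi\xi}\phi^\lambda\|=O(\lambda)$, and then claim the overshoot is harmless because ``the gain per integration by parts depends on the rescaled-cutoff scale $sR$ alone.'' This misdescribes how nonstationary phase works: each integration by parts gains only from a \emph{lower} bound on the phase gradient, while the amplitude-derivative bounds control the losses, not the gain. If the $O(s\lambda)$ remainder were genuinely uncontrolled (as it would be when $R\ll\lambda$), it could cancel the $O(sR)$ main term and create a near-stationary point inside the effective support, in which case no amount of integration by parts — and no amount of $\varepsilon_0$-slack in $s\ge R^{-1/2+\varepsilon_0}$ — produces any decay.

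The actual fix is that the estimate $\|\partial^2_{\xi\xi}\phi^\lambda\|=O(\lambda)$ is far from sharp on the region $x\in B_R$. Under the standard reduction for H\"ormander phases one may take $\phi(0,\xi)\equiv 0$, which forces $\partial^2_{\xi\xi}\phi(0,\xi)\equiv 0$, and then for $x\in B_R$,
\[
\partial^2_{\xi\xi}\phi^\lambda(x,\xi)
   = \lambda\,\partial^2_{\xi\xi}\phi(x/\lambda,\xi)
   = O\bigl(\lambda\cdot|x|/\lambda\bigr)
   = O(R),
\]
so the Taylor remainder in the $\xi$-gradient is $O(sR)$, not $O(s\lambda)$. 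That $O(sR)$ error is of the same size as the target and is absorbed precisely by taking the constant $C_0$ in the definition of $T^{R,s}_{\theta,v}$ large. The paper presents the computation a little differently — it applies the mean value theorem in $x'$ at a \emph{fixed} $\xi$ and then passes from $\partial_\xi\phi^\lambda$ at $\xi$ to its value at $\xi_\theta$ using the relation $\partial_\xi\phi^\lambda\bigl((\gamma^\lambda_{\theta}(v,x_n),x_n),\xi_\theta\bigr)=Rv$ — but exactly the same $O(R)$ bound on $\partial^2_{\xi\xi}\phi^\lambda$ over $B_R$ is what makes that comparison legitimate. Finally, one more structural point: before invoking the phase-gradient lower bound you should first localize $\xi$ to $|\xi-\eta|\lesssim s^{-1}R^{-1+\varepsilon_0}\le s$ (hence $|\xi-\xi_\theta|\lesssim s$) by exploiting the rapid decay of the Fourier transform of the spatial bump $\beta\bigl(s^{-1}R^{-1}(\cdot-Rv)\bigr)$; the gradient lower bound only holds on that localized region, and your $(\omega,\tau)$ rescaling should make this reduction explicit rather than treating the $\xi$-integral as freely supported on $\supp a$.
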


\begin{proof}
We write
\begin{align*}
\mathcal H^\lambda (Q^{R,s}_{\theta,v}f)(x)
&=
\iiint 
e^{2\pi i(\phi^\lambda(x,\xi)-y\cdot(\xi-\eta))}
\,a^\lambda(x,\xi)\,
\beta\bigl(s^{-1}R^{-1}(y-Rv)\bigr)\\
&\quad\times \beta\bigl(s^{-1}(\eta-\xi_\theta)\bigr)
f(\eta)\,d\eta\,dy\,d\xi.
\end{align*}

By the support of the second bump,
\[
|\eta-\xi_\theta|\lesssim s.
\]
Moreover, since $y$ is localized to a ball of radius $\sim sR$, the Fourier transform in $y$ shows that the integral is negligible unless
\[
|\xi-\eta|\lesssim s^{-1}R^{-1+\varepsilon_0}\le R^{-1/2}\le s.
\]
Consequently
\[
|\xi-\xi_\theta|\lesssim  s
\]
as well. 

If $x\notin T^{R,s}_{\theta,v}$ then $|x'-\gamma^\lambda_{\theta}(v,x_n)|> sR$. By the mean value theorem in the $x'$ variable there is $\tilde x'$ on the segment between $x'$ and $\gamma^\lambda_{\theta}(v,x_n)$ such that
\[
\partial_\xi\phi^\lambda(x,\xi)
-\partial_\xi\phi^\lambda((\gamma^\lambda_{\theta}(v,x_n),x_n),\xi)
= \partial_{x'}\partial_\xi\phi^\lambda((\tilde x',x_n),\xi)\cdot (x'-\gamma^\lambda_{\theta}(v,x_n)).
\]
Using the uniform lower bound on the singular values of $\partial_{x'}\partial_\xi\phi^\lambda$ in the relevant region,
\[
\bigl|\partial_\xi\phi^\lambda(x,\xi)
-\partial_\xi\phi^\lambda((\gamma^\lambda_{\theta}(v,x_n),x_n),\xi)\bigr|
\ge c\,|x'-\gamma^\lambda_{\theta}(v,x_n)|.
\]
Since $\partial_\xi\phi^\lambda((\gamma^\lambda_{\theta}(v,x_n),x_n),\xi_\theta)=Rv$, it follows that
\[
|\partial_\xi\phi^\lambda(x,\xi)-Rv|
\gtrsim sR.
\]
Because $|y-Rv|\lesssim sR$, if we chose $C_0>0$ to be large enough, we then have
\[
|\partial_\xi\phi^\lambda(x,\xi)-y|
\ge |\partial_\xi\phi^\lambda(x,\xi)-Rv| - |y-Rv|
\gtrsim sR.
\]
Thus the phase in $\xi$ is nonstationary with parameter $\gtrsim sR$. Repeated integration by parts in $\xi$, using the operator
\[
L=\frac{1}{2\pi i}\frac{(\partial_\xi\phi^\lambda(x,\xi)-y)\cdot \nabla_\xi}{|\partial_\xi\phi^\lambda(x,\xi)-y|^2},
\]
(which satisfies $L e^{2\pi i(\phi^\lambda(x,\xi)-y\cdot\xi)}=e^{2\pi i(\phi^\lambda(x,\xi)-y\cdot\xi)}$) moves derivatives onto the smooth amplitude and cutoff factors; each application gains a factor $(s^2R)^{-1}\le R^{-2\varepsilon_0}$, iterating gives arbitrary decay:
\[
|\mathcal H^\lambda (Q^{R,s}_{\theta,v}f)(x)|={\rm RapDec}(R)\|f\|_{L^2}.
\]
\end{proof}

\begin{definition}\label{def:MKN_s}
Let $s$ range over dyadic values with
\[
R^{-\tfrac12+\varepsilon_0}\le s\le 1.
\]
For $f\in L^2(\mathbb R^{n-1})$, define
\[
\|f\|_{\MKN_{\text{\rm H\"orm}}(R)}
:=\sup_{s,\theta,v}s^{-\frac{n-1}{2}}
\,\|Q^{R,s}_{\theta,v}f\|_{L^2(\mathbb R^{n-1})},
\]
where the supremum in $\theta,v$ is over frequency cubes $\theta$ with centers $\xi_\theta\in s\mathbb Z^{n-1}$ and spatial shifts $v\in s\mathbb Z^{n-1}$. 
\end{definition}

\smallskip

In the special case $\mathcal H^\lambda=E_\Sigma$ (the Fourier extension operator), we define an anisotropic version as follows. Let
\[
\vec s=(\mathbf s;U),
\]
where $\mathbf s=(s_1,\dots,s_{n-1})$ with each $s_i$ dyadic and $R^{-1/2+\varepsilon_0}\le s_i\le 1$, and $U\in O(n-1)$, the orthogonal group on $\mathbb R^{n-1}$. Choose $\beta_0\in C_0^\infty(\mathbb R)$ with
\[
\sum_{\nu\in\mathbb Z}\beta_0(t+\nu)=1,\qquad \operatorname{supp}\beta_0\subset\{t:\ |t|\le 2\}.
\]
Define the oriented bump
\[
\beta_{\vec s}(z):=\beta_{\mathbf s}(U^{-1}z),
\qquad\text{where }\ 
\beta_{\mathbf s}(z):=\prod_{i=1}^{n-1}\beta_0(s_i^{-1}z_i),
\quad z\in\mathbb R^{n-1}.
\]
Set the rotated lattice
\[
\Lambda_{\vec s}:=\bigl\{\,U\,\mathrm{diag}(s_1,\dots,s_{n-1})\,m:\ m\in\mathbb Z^{n-1}\,\bigr\}.
\]
Choose frequency rectangles $\theta$ with centers $\xi_\theta\in \Lambda_{\vec s}$ and define
\[
f_{\theta,\vec s}(\xi)=\beta_{\vec s}(\xi-\xi_\theta)\,f(\xi).
\]

For spatial shifts  $v\in \Lambda_{\vec s}$ define
\[
Q^{R,\vec s}_{\theta,v}f(\xi)
=\Bigl[(f_{\theta,\vec s})^\vee({}\cdot{})\,\beta_{\vec s}\bigl(R^{-1}({}\cdot{}-Rv)\bigr)\Bigr]^\wedge(\xi).
\]
Then
\[
f=\sum_{\theta,v}Q^{R,\vec s}_{\theta,v}f,
\qquad
\|f\|_{L^2}^2\sim \sum_{\theta,v}\|Q^{R,\vec s}_{\theta,v}f\|_{L^2}^2,
\]
with implicit constants depending only on $n$ and $\beta_0$.

Given a ball $B_R\subset B_\lambda$. At scale $\vec s$, each wave-packet $E_\Sigma\bigl(Q^{R,\vec s}_{\theta,v}f\bigr)$ is microlocalized to a rectangular box in physical space $B_R$ of dimensions
\[
R \times s_1R \times s_2R \times \cdots \times s_{n-1}R,
\]
whose long axis is the propagation direction and whose transverse axes have lengths determined by $\vec s$ and directions determined by $U$.
We define the anisotropic microlocal Kakeya--Nikodym norm by
\[
\|f\|_{\widetilde{\MKN}(R)}
:=\sup_{\vec s}\,\sup_{\theta,v}
\ \Bigl(\prod_{i=1}^{n-1}s_i^{-1/2}\Bigr)\,\|Q^{R,\vec s}_{\theta,v}f\|_{L^2(B^{n-1}_1)}.
\]
For fixed $\vec s$, the wave-packet support lemma and Plancherel hold for the families $E_\Sigma(Q^{R,\vec s}_{\theta,v}f)$ and $Q^{R,\vec s}_{\theta,v}f$, respectively. In fact, each $E_\Sigma(Q^{R,\vec s}_{\theta,v}f)$ is essentially supported in a rectangular box $\Box^{R,\vec s}_{\theta,v}$ adapted to it.

\medskip

\subsection{Spectral projectors}\label{subsec:spec-proj} Now we perform decompositions for the approximate spectral projector.
Recall that
$$e_\lambda=\chi_\lambda e_\lambda=\chi(\lambda-\sqrt{\Delta_g})\,e_\lambda,$$ for $\chi\in\mathcal S$
satisfying $\chi(0)=1$ and $\operatorname{supp}\widehat\chi\subset(\frac12,1)$. Then by \cite[Lemma 5.1.3]{sogge2017fourier}, for any $N\in\mathbb N$ and $\lambda\ge1$ one has
\begin{multline}
\chi_\lambda f(x)
=\frac{1}{2\pi}\int \widehat\chi(t)\,e^{i\lambda t}\bigl(e^{-it\sqrt{\Delta_g}}f\bigr)(x)\,dt
\\=\lambda^{\frac{n-1}{2}}\int e^{i\lambda\,d_g(x,y)}\,a_\lambda(x,y)\,f(y)\,dy
+{\rm RapDec}(\lambda)\|f\|_{L^2},
\end{multline}
where $a_\lambda\in C^\infty$ ranges in a bounded set and $a_\lambda(x,y)=0$ if $d_g(x,y)\notin(\frac12,1)$. It therefore suffices to work with the operator
\[
\widetilde\chi_\lambda  f(x):=
\int e^{i\lambda\,d_g(x,y)}\,a_\lambda(x,y)\,f(y)\,dy,
\]
in a single coordinate chart. The phase $d_g(x,y)$ satisfies the positive-definite $n\times n$ Carleson--Sj\"olin hypotheses (cf.\ \cite{sogge2017fourier}). Consequently, the Blair--Sogge wave-packet construction applies to any phase in this class. For simplicity, we carry out the decomposition only for the distance phase $d_g(x,y)$; the extension to a general positive-definite $n\times n$ Carleson--Sj\"olin phase follows with routine modifications.
Since the problem is purely local, after a partition of unity and, if needed, a rescaling of the metric, we work in geodesic normal coordinates about a fixed point and assume that the coefficients $g_{ij}$ are uniformly close to the Euclidean ones on this chart. By a second partition of unity in $(x,y)$, we further assume that $a_\lambda(x,y)=0$ unless $x\in B_{\frac1{10}}(0)$ and $y\in B_{\frac1{10}}\bigl((0,\dots,0,\frac12)\bigr)$.

Let $\Phi_t(x,\xi)=(y(t),\zeta(t))$ be the geodesic flow on $S^*M$ with $(y(0),\zeta(0))=(x,\xi)$. For a unit cotangent vector $(x,\xi)$ with $x\in B_1(0)$ in the chart and $\xi=(\xi',\xi_n)$ with $|\xi_n|\ge \tfrac12$, there exists a unique time $t=t(x,\xi)\in(-1,1)$ such that $y_n(t)=0$. Denote $$v(x,\xi):=(y_1(t(x,\xi)),\dots,y_{n-1}(t(x,\xi)))\in\mathbb R^{n-1}.$$ Set $$\omega(x,\xi)=\zeta(t(x,\xi))/|\zeta(t(x,\xi))|\in S^{n-1},$$ and write $\omega(x,\xi)=(\omega'(x,\xi),\omega_n(x,\xi))$. Then $\omega_n(x,\xi)$ is bounded away from $0$ on the region of interest.

Let $1\le R\le \lambda$. Fix $0<\varepsilon_0\ll1$. Let $\beta\in C_0^\infty(\mathbb R^{n-1})$ be as in the previous section, with 
\[
\sum_{\nu\in\mathbb Z^{n-1}}\beta(z+\nu)=1,\qquad \operatorname{supp}\beta\subset\{z:\ |z|\le 2\}.
\]
Let $\alpha\in C_0^\infty((c/2,2c^{-1}))$ satisfy $\alpha\equiv1$ on $[c,c^{-1}]$ for a small fixed $c>0$. For dyadic $s$ with $R^{-1/2+\varepsilon_0}\le s\le 1$, let $v\in s\mathbb Z^{n-1}$. We decompose $\mathbb R^{n-1}$ into cubes $\{\theta\}$ with centers $\xi_\theta\in s\mathbb Z^{n-1}$, define the microlocal cutoff by
\[
\mathcal Q^{R,s}_{\theta,v}(y,\eta)
=
\beta\bigl(s^{-1}(\omega'(y,\eta)-\theta)\bigr)\,
\beta\bigl(s^{-1}(v(y,\eta)-v)\bigr)\,
\alpha(|\eta|/\lambda),
\]
and let $\mathcal Q^{R,s}_{\theta,v}(y,D)$ be the associated order-zero pseudodifferential operator of type $(\tfrac12+\varepsilon_0,\tfrac12-\varepsilon_0)$. 
For $f\in L^2 \bigl(B\bigl((0,\dots,0,\tfrac12),\tfrac1{10}\bigr)\bigr)$ with Fourier support in the set $\{\eta:\ |\eta|\in[c,c^{-1}]\}$, and any $N\in\mathbb N$, we have
\[
f(y)=\sum_{\theta,v} \mathcal Q^{R,s}_{\theta,v}(y,D)f(y)+{\rm RapDec}(\lambda)\|f\|_{L^2},
\ 
\|f\|_{L^2}^2\sim \sum_{\theta,v}\|\mathcal Q^{R,s}_{\theta,v}(y,D)f(y)\|_{L^2}^2,
\]
and each $\mathcal Q^{R,s}_{\theta,v}(y,D)f(y)$ is microlocally supported near a single geodesic tube of length $1$ and cross-section radius $s$.

Define the geodesic $\gamma_{\theta,v}$ to be the unit-speed geodesic with
\[
\gamma_{\theta,v}(0)=(v,0)\in\{x_n=0\},
\qquad 
\dot\gamma_{\theta,v}(0)=\omega_\theta,
\]
where $\omega_\theta=(\omega'_\theta,\omega_{\theta,n})$ is a unit vector with $\omega'_\theta=\theta$. Consider on $B_\lambda$ the rescaled operator
\[
\chi^\lambda f(x):=\widetilde\chi_\lambda f(x/\lambda).
\]
Fix a ball $B_R\subset B_\lambda$. After the dilation $x\mapsto \lambda x$, write again $x$ for the rescaled variable and set
\[
\gamma^{\lambda}_{\theta,v}=\{\lambda\,\gamma_{\theta,v}(t/\lambda):\ |t|\le \lambda\}\subset B_\lambda,\ 
T^{R,s}_{\theta,v}=\{x\in B_R:\ \dist(x,\gamma^{\lambda}_{\theta,v})\le C_0\,sR\},
\]
where $C_0>0$ is a large absolute constant.
 Therefore, $T^{R,s}_{\theta,v}$ is a curved tube of length $\sim R$ and radius $\sim sR$ about the rescaled geodesic $\gamma^{\lambda}_{\theta,v}$ in $B_R$.

These wave-packets obey the same support and almost-orthogonality properties.

\begin{lemma}[Wave-packet support]\label{lem:wp_support_spec}
For $R^{-1/2+\varepsilon_0}\le s\le 1$,
\[
\bigl|\chi^\lambda\bigl(\mathcal Q^{R,s}_{\theta,v}f\bigr)(x)\bigr|
={\rm RapDec}(\lambda)\,\|f\|_{L^2}
\quad\text{whenever } x\in B_R\setminus T^{R,s}_{\theta,v}.
\]
\end{lemma}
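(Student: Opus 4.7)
The plan is to mirror the non-stationary phase argument in the proof of Lemma~\ref{lem:wp_support}, adapted to the $n\times n$ phase $\lambda d_g(x/\lambda,y)$ and the pseudodifferential cutoff $\mathcal Q^{R,s}_{\theta,v}(y,D)$. First I would write the composition out explicitly as a triple oscillatory integral,
$$
\chi^\lambda\bigl(\mathcal Q^{R,s}_{\theta,v}f\bigr)(x)
=(2\pi)^{-n}\iiint e^{i[\lambda d_g(x/\lambda,y)+(y-z)\cdot\eta]}\,a_\lambda(x/\lambda,y)\,\mathcal Q^{R,s}_{\theta,v}(y,\eta)\,f(z)\,dz\,d\eta\,dy.
$$
The goal is to integrate by parts in $y$, supplemented when needed by $\eta$-IBPs, to produce the claimed ${\rm RapDec}(\lambda)$ decay whenever $x\in B_R\setminus T^{R,s}_{\theta,v}$.

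The key geometric observation is that the symbol $\mathcal Q^{R,s}_{\theta,v}(y,\eta)$ localizes $(y,\eta/|\eta|)$ to an $O(s)$-microlocal neighborhood of the initial data of $\gamma_{\theta,v}$, so that the cogeodesic through $(y,\eta/|\eta|)$ stays within $O(s)$ of $\gamma_{\theta,v}$ on the chart. Meanwhile, the $y$-critical equation reads $\eta=-\lambda\nabla_y d_g(x/\lambda,y)$, forcing $\eta$ to point along the minimizing geodesic from $y$ to $x/\lambda$. If $x\notin T^{R,s}_{\theta,v}$, these two constraints are incompatible, and the positive-definite $n\times n$ Carleson--Sj\"olin structure of $d_g$---which gives a uniform lower bound on the Jacobian of the flow-data map $(y,\eta/|\eta|)\mapsto(v(y,\eta),\omega(y,\eta))$ on the chart---translates this into the quantitative bound
$$
\bigl|\eta+\lambda\nabla_y d_g(x/\lambda,y)\bigr|\;\gtrsim\;sR
$$
uniformly on the effective support of $a_\lambda\cdot\mathcal Q^{R,s}_{\theta,v}$, upon choosing $C_0$ large enough in terms of the chart.

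With this lower bound I would iterate the transport operator
$$
L=\frac{1}{i}\,\frac{\eta+\lambda\nabla_y d_g(x/\lambda,y)}{|\eta+\lambda\nabla_y d_g(x/\lambda,y)|^2}\cdot\nabla_y,
$$
satisfying $L e^{i\Psi}=e^{i\Psi}$. Each $y$-derivative of $\mathcal Q^{R,s}_{\theta,v}$ contributes at most a factor $s^{-1}$ (from differentiating the $s$-width cutoffs $\beta(s^{-1}(\,\cdot\,))$ against the smooth functions $\omega',v$, which have $O(1)$ $y$-derivatives), while each application of $|\nabla_y\Psi|^{-1}$ gains a factor of $(sR)^{-1}$, for a net gain of $(s^2R)^{-1}\le R^{-2\varepsilon_0}$ per iteration, exactly as in Lemma~\ref{lem:wp_support}. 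Combining this with additional $\eta$-IBPs, which exploit the $(1/2+\varepsilon_0,1/2-\varepsilon_0)$ symbol class and the non-stationary phase $\nabla_\eta\Psi=y-z$ to restrict to the pseudodifferential diagonal $|y-z|\lesssim\lambda^{-1/2-\varepsilon_0}$, upgrades the decay to ${\rm RapDec}(\lambda)$. The $z$-integration is then controlled by $\|f\|_{L^1}\lesssim\|f\|_{L^2}$ on the bounded support of $f$.

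The principal obstacle is the phase lower bound $|\eta+\lambda\nabla_y d_g(x/\lambda,y)|\gtrsim sR$. In Lemma~\ref{lem:wp_support} this follows directly from a mean value theorem applied to the explicit $n\times(n-1)$ phase $\partial_\xi\phi^\lambda$ together with the uniform non-degeneracy of $\partial_{x'\xi}^2\phi^\lambda$. Here, since the cotangent condition is encoded through the geodesic flow via the auxiliary functions $\omega(y,\eta)$ and $v(y,\eta)$ rather than by an explicit phase, one must reformulate the symbol support in geodesic normal coordinates and invoke the $n\times n$ positive-definite Carleson--Sj\"olin structure of $d_g$ to establish a uniform Jacobian lower bound for $(y,\eta/|\eta|)\mapsto(v(y,\eta),\omega(y,\eta))$ on the chart. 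Equivalently, one has to show that an $O(s)$-perturbation of the initial cotangent data displaces the geodesic by at most $O(s)$ at any point of the chart, which follows from the smoothness of the exponential map and the closeness of the metric to the Euclidean one on this coordinate chart; once this is in place, the remainder of the argument is essentially mechanical.
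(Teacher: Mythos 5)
Your proposal follows exactly the route the paper itself invokes: the paper's ``proof'' of Lemma~\ref{lem:wp_support_spec} is a one-line reference to the argument of Lemma~\ref{lem:wp_support} and to Blair--Sogge's Lemma~3.2, and you flesh out that argument in the natural way (triple oscillatory integral, geometric non-degeneracy of the flow-data map yielding $|\eta+\lambda\nabla_y d_g|\gtrsim sR$, iterated transport IBP in $y$ combined with $\eta$-IBP to exploit the type-$(\tfrac12+\varepsilon_0,\tfrac12-\varepsilon_0)$ symbol localization). You also correctly isolate the phase lower bound as the substantive point.

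One technical remark worth keeping in mind. Your claim that each $y$-IBP gains $(s^2R)^{-1}\le R^{-2\varepsilon_0}$ ``exactly as in Lemma~\ref{lem:wp_support}'' glosses over an asymmetry between the two settings. In Lemma~\ref{lem:wp_support}, after the (implicit) reduction of $\phi$ to normal form one has $\partial_\xi^2\phi(0,\xi)=0$, so for $x\in B_R$ the Hessian $\partial_\xi^2\phi^\lambda(x,\xi)$ is of size $O(R)$, and the divergence term $\nabla_\xi\cdot\bigl(\nabla_\xi\Psi/|\nabla_\xi\Psi|^2\bigr)\sim\partial_\xi^2\phi^\lambda/|\nabla_\xi\Psi|^2$ gains the same factor $(s^2R)^{-1}$. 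For the $n\times n$ distance phase there is no such vanishing: $\nabla_y^2 d_g(x/\lambda,y)\sim 1$ even at $x=0$ (it is the transverse Hessian of the distance from the center of $B_R$), so $\nabla_y^2(\lambda d_g(x/\lambda,y))\sim\lambda$ and the divergence term is of size $\lambda(sR)^{-2}$, which is $\gg 1$ when $R\ll\lambda^{1/(1+2\varepsilon_0)}$. To close the argument for the full range $1\le R\le\lambda$ one must combine the $y$- and $\eta$-IBPs more carefully --- for instance by first restricting to $|y-z|\lesssim(s\lambda)^{-1}$ via the $\eta$-IBP and then using a transport vector field with components in both $y$ and $\eta$ chosen to annihilate the pure $y$-Hessian (which is possible since $\nabla_\eta^2\Psi=0$ and $\nabla_y\nabla_\eta\Psi=I$), or by conjugating by the modulation $e^{i\lambda d_g(0,\cdot)}$. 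Your mention of $\eta$-IBPs is for the pseudodifferential-diagonal localization, which is a different role. This is a refinement rather than a change of strategy, and it is the only place where your sketch needs additional care.
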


The proof of Lemma \ref{lem:wp_support_spec} is essentially the same as that of Lemma \ref{lem:wp_support}; see also Lemma 3.2 in \cite{blair2015refined} for a direct and more detailed argument.

\begin{lemma}\label{lem:L2_orth_spec}
For each fixed $s$ with $R^{-1/2+\varepsilon_0}\le s\le 1$,
\begin{equation}
\nonumber
    \Bigl\|\sum_{\theta,v}\mathcal Q^{R,s}_{\theta,v}f\Bigr\|_{L^2}^2
\sim \sum_{\theta,v}\|\mathcal Q^{R,s}_{\theta,v}f\|_{L^2}^2,
\end{equation}
with implicit constants independent of $\lambda$, $R$, and $s$.
\end{lemma}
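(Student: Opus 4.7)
The plan is to treat the finite family $\{\mathcal Q^{R,s}_{\theta,v}(y,D)\}$ as an almost-orthogonal decomposition in the exotic H\"ormander class $S^0_{\rho,\delta}$ with $\rho=\tfrac12+\varepsilon_0$ and $\delta=\tfrac12-\varepsilon_0$. Since $\omega'(y,\eta)$ and $v(y,\eta)$ are smooth and homogeneous of degree zero in $\eta$ on the support of $\alpha(|\eta|/\lambda)$, a direct check gives
\[
|\partial_\eta^\mu\partial_y^\nu\mathcal Q^{R,s}_{\theta,v}(y,\eta)|\lesssim (s\lambda)^{-|\mu|}\,s^{-|\nu|}\le \lambda^{-\rho|\mu|+\delta|\nu|},
\]
using precisely $s\ge R^{-1/2+\varepsilon_0}\ge \lambda^{-1/2+\varepsilon_0}$. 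The seminorm bounds are uniform in $(\theta,v)$, the symbols have bounded overlap in phase space, and the partitions of unity in $\omega'$ and $v$ yield the identity $\sum_{\theta,v}\mathcal Q^{R,s}_{\theta,v}(y,\eta)=\alpha(|\eta|/\lambda)$.

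For the upper bound $\bigl\|\sum_{\theta,v}\mathcal Q^{R,s}_{\theta,v}f\bigr\|_{L^2}^2\lesssim \sum_{\theta,v}\|\mathcal Q^{R,s}_{\theta,v}f\|_{L^2}^2$, I would expand the square and examine the cross terms $\langle(\mathcal Q^{R,s}_{\theta',v'})^*\mathcal Q^{R,s}_{\theta,v}f,f\rangle$. By the symbolic composition formula for $S^0_{\rho,\delta}$ (valid because $\rho>\delta$), the operator $(\mathcal Q^{R,s}_{\theta',v'})^*\mathcal Q^{R,s}_{\theta,v}$ has symbol equal to the pointwise product $\overline{\mathcal Q^{R,s}_{\theta',v'}}\,\mathcal Q^{R,s}_{\theta,v}$ modulo an asymptotic tail whose $k$-th remainder gains a factor $\lambda^{-2k\varepsilon_0}$. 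Unless $|\theta-\theta'|\lesssim s$ and $|v-v'|\lesssim s$, the product vanishes identically, and iterating the expansion gives $\|(\mathcal Q^{R,s}_{\theta',v'})^*\mathcal Q^{R,s}_{\theta,v}\|_{L^2\to L^2}={\rm RapDec}(\lambda)$. Only an $O(1)$-sized block of nearest-neighbor pairs survives, on which Cauchy--Schwarz and AM--GM produce the claimed bound.

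For the reverse inequality I would set $A:=\sum_{\theta,v}(\mathcal Q^{R,s}_{\theta,v})^*\mathcal Q^{R,s}_{\theta,v}$, so that $\sum_{\theta,v}\|\mathcal Q^{R,s}_{\theta,v}f\|_{L^2}^2=\langle Af,f\rangle$. By the same calculus, $A\in S^0_{\rho,\delta}$ has principal symbol $\sum_{\theta,v}|\mathcal Q^{R,s}_{\theta,v}(y,\eta)|^2 = B(y,\eta)\,\alpha(|\eta|/\lambda)^2$, where $B$ is bounded above and below by positive constants (a sum of squares of a partition of unity with bounded overlap). Consequently, up to an $O(\lambda^{-2\varepsilon_0})$ operator-norm error, $A$ is comparable to $\alpha(|D|/\lambda)^*\alpha(|D|/\lambda)$, so $\sum_{\theta,v}\|\mathcal Q^{R,s}_{\theta,v}f\|_{L^2}^2\sim \|\alpha(|D|/\lambda)f\|_{L^2}^2$. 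Since also $\sum_{\theta,v}\mathcal Q^{R,s}_{\theta,v}=\alpha(|D|/\lambda)$ modulo ${\rm RapDec}(\lambda)$, the quantity $\bigl\|\sum_{\theta,v}\mathcal Q^{R,s}_{\theta,v}f\bigr\|_{L^2}^2$ is likewise comparable to $\|\alpha(|D|/\lambda)f\|_{L^2}^2$, and combining the two comparisons yields the desired two-sided equivalence.

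The main technical obstacle is the bookkeeping in the exotic calculus with the small gap $\rho-\delta=2\varepsilon_0$: each iteration of the composition formula gains only $\lambda^{-2\varepsilon_0}$, so one must iterate $O(1/\varepsilon_0)$ times to extract any prescribed negative power of $\lambda$. One must check at this point that all implicit constants depend only on the fixed cutoffs $\beta,\alpha$ and on $\varepsilon_0$, and not on $\lambda$, $R$, or $s$, which is what secures the uniform equivalence claimed in the lemma.
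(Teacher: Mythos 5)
Your proof is correct and takes essentially the same route as the paper's one-line argument, which simply cites the $L^2$ boundedness of order-zero operators, the exotic $(\tfrac12+\varepsilon_0,\tfrac12-\varepsilon_0)$ symbolic calculus, and the bounded overlap of the symbol supports; you have spelled out exactly those three ingredients in full, including the symbol estimates $|\partial_\eta^\mu\partial_y^\nu\mathcal Q^{R,s}_{\theta,v}|\lesssim (s\lambda)^{-|\mu|}s^{-|\nu|}$, the near-orthogonality of distant packets via iterated composition, and the lower bound via comparison of $\sum(\mathcal Q^{R,s}_{\theta,v})^*\mathcal Q^{R,s}_{\theta,v}$ with $\alpha(|D|/\lambda)^*\alpha(|D|/\lambda)$. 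The only thing worth flagging is that absorbing the $O(\lambda^{-2\varepsilon_0})\|f\|_{L^2}^2$ Gårding error in your lower-bound step tacitly uses that $f$ is microlocally concentrated where $\alpha$ is nondegenerate (so that $\|\alpha(|D|/\lambda)f\|_{L^2}\sim\|f\|_{L^2}$), which is the standing assumption in this part of the paper.
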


\begin{proof}
This follows from the $L^2$ boundedness of the order-zero operators $\mathcal Q^{R,s}_{\theta,v}(x,D)$, the type $(\tfrac12+\varepsilon_0,\tfrac12-\varepsilon_0)$ calculus, and the bounded overlap of the supports of the symbols $\mathcal Q^{R,s}_{\theta,v}(x,\xi)$ in $(x,\xi)$.
\end{proof}

Similarly, we define the associated microlocal Kakeya--Nikodym  norm as follows.

\begin{definition}[Microlocal Kakeya--Nikodym  norm for the projector]\label{def:MKN_spec}
For $f\in L^2$ supported in the chart, define
\[
\|f\|_{\MKN_{\mathrm{spec}}(R)}
:=\sup_{R^{-1/2+\varepsilon_0}\le s\le 1}\ \sup_{\theta,v}
s^{-\frac{n-1}{2}}\,\|\mathcal Q^{R,s}_{\theta,v}(x,D)f\|_{L^2}.
\]
\end{definition}

Now we define the anisotropic microlocal Kakeya--Nikodym norm for the constant sectional curvature case.
We choose a chart where all geodesics are straight lines and $\{|\eta_n|\ge \tfrac12|\eta|\}$. We parametrize direction and transverse position by
\[
u(\eta)=\frac{\eta'}{\eta_n}\in\mathbb R^{n-1},\qquad
b(y,\eta)=y'-y_n\,\frac{\eta'}{\eta_n}\in\mathbb R^{n-1}.
\]
Fix $0<\varepsilon_0\ll1$. Recall that $\beta_0\in C_0^\infty(\mathbb R)$ satisfies
\[
\sum_{\nu\in\mathbb Z}\beta_0(t+\nu)=1,\qquad \supp\beta_0\subset\{t:|t|\le 2\},
\]
and, for $\vec s=(\mathbf s;U)(s_1,\dots,s_{n-1};U)$ with each $s_i$ dyadic and $R^{-1/2+\varepsilon_0}\le s_i\le 1$, $U\in O(n-1)$, set
\[
\beta_{\mathbf s}(z)=\prod_{i=1}^{n-1}\beta_0(s_i^{-1}z_i),\qquad z\in\mathbb R^{n-1}.
\]
Define
\[
\beta_{\vec s}(z):=\beta_{\mathbf s}(U^{-1}z),\qquad z\in\mathbb R^{n-1}.
\]
Choose rectangular grids
\[
\theta\in U(\vec s\,\mathbb Z^{n-1}) ,\qquad
v\in U(\vec s\,\mathbb Z^{n-1}).
\]
Define the microlocal cutoffs as:
\[
\mathcal Q^{R,\vec s}_{\theta,v}(y,\eta)
=\beta_{\vec s}\bigl(u(\eta)-\theta\bigr)\,
 \beta_{\vec s}\bigl(b(y,\eta)-v\bigr)\,
 \alpha(|\eta|/\lambda),
\]
and let $\mathcal Q^{R,\vec s}_{\theta,v}(y,D)$ be the associated zero-order pseudodifferential operator of type $(\tfrac12+\varepsilon_0,\tfrac12-\varepsilon_0)$. Then, for $f$ supported in the chart with Fourier support in $\{\eta:|\eta|\in[c,c^{-1}]\}$,
\[
f=\sum_{\theta,v}\mathcal Q^{R,\vec s}_{\theta,v}(y,D)f+{\rm RapDec}(\lambda)\|f\|_{L^2},
\qquad
\|f\|_{L^2}^2\sim \sum_{\theta,v}\|\mathcal Q^{R,\vec s}_{\theta,v}(y,D)f\|_{L^2}^2.
\]
Each packet $\chi^\lambda\bigl(\mathcal Q^{R,\vec s}_{\theta,v}f\bigr)$ restricted to a ball $B_R$ is microlocalized in a rotated rectangular box of dimensions
\[
R \times s_1R \times s_2R \times \cdots \times s_{n-1}R.
\]

Finally, we set
\[
\|f\|_{\widetilde{\MKN}_{\mathrm{spec}}(R)}
:=\sup_{\vec s}\, \sup_{\theta,v}
\ \Bigl(\prod_{i=1}^{n-1}s_i^{-1/2}\Bigr)\,\|\mathcal Q^{R,\vec s}_{\theta,v}(x,D)f\|_{L^2}.
\]
For fixed $\vec s=(\mathbf s;U)$, the wave-packet support lemma and Plancherel hold for the families $\chi^\lambda(\mathcal Q^{R,\vec s}_{\theta,v}f)$ and $\mathcal Q^{R,\vec s}_{\theta,v}f$, respectively, and each $\chi^\lambda(\mathcal Q^{R,\vec s}_{\theta,v}f)$ is essentially supported in the rectangular box $\Box^{R,\vec s}_{\theta,v}$ adapted to it.

\medskip

\subsection{Unified notation}
From now on we use a single notation that covers both settings (the H\"ormander $n\times(n-1)$ operators and the spectral projector with phase $d_g$). This is possible because the wave-packets in the two cases have the same microlocal profile, hence satisfy the same refined decoupling estimates. The notation is summarized in Table \ref{tab:notation-general}.

For the sake of simplicity, we often use a unified, simplified notation as in the table. When $R$, $s$, and the tube $T=T^s:=T^{R,s}_{\theta,v}$ are clear from context, we write $f_T:=f^{R,s}_{\theta,v}$. We also use $T$ and $(\theta,v)$ interchangeably to index wave-packets and abbreviate $\sum_T$ for $\sum_{\theta,v}$.

Now, in this notation, we summarize our previous findings from the wave-packet decomposition in the following lemmas.

\begin{lemma}[Orthogonality]\label{lem Orthogonal}
For a fixed $s\in[R^{-1/2+\varepsilon_0},1]$,
\begin{equation}\label{eq:ortho1}
\bigl\|\sum_{\theta,v}f_{\theta,v}^{R,s} \bigr\|_{L^2}^2\sim \sum_{\theta,v}\|f_{\theta,v}^{R,s}\|_{L^2}^2.
\end{equation}

\end{lemma}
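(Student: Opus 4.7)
The plan is to prove this as an almost-orthogonality statement, treating the two settings (H\"ormander operators and spectral projectors) on the same footing via the unified notation. In each case the decomposition $f = \sum_{\theta,v} f^{R,s}_{\theta,v}$ (up to a RapDec error in the spectral case) is the composition of two bounded-overlap partitions of unity: one in the frequency/direction variable at scale $s$ and one in the physical/base-point variable at scale $sR$. The two-sided equivalence will follow by applying Plancherel on each of these two scales in turn.

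For the H\"ormander setting, I would proceed by two applications of Plancherel. Unwinding the definition of $Q^{R,s}_{\theta,v} f$ and applying Plancherel in $\xi$ gives
\[
\|Q^{R,s}_{\theta,v} f\|_{L^2}^2 = \bigl\|(f_\theta)^{\vee}\,\beta(s^{-1}R^{-1}(\cdot - Rv))\bigr\|_{L^2}^2.
\]
Summing over $v \in s\mathbb{Z}^{n-1}$ and exploiting that $\{\beta(s^{-1}R^{-1}(\cdot - Rv))\}_v$ is a bounded-overlap partition of unity in physical space yields $\sum_v \|Q^{R,s}_{\theta,v} f\|_{L^2}^2 \sim \|f_\theta\|_{L^2}^2$. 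A second application of Plancherel, combined with the fact that $\{\beta(s^{-1}(\cdot - \xi_\theta))\}_\theta$ is itself a bounded-overlap partition of unity in frequency, then gives $\sum_\theta \|f_\theta\|_{L^2}^2 \sim \|f\|_{L^2}^2$. Since $\sum_{\theta,v} f^{R,s}_{\theta,v} = f$ identically in this case, the chain of equivalences closes and delivers both inequalities.

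For the spectral projector case, this is essentially the content of Lemma \ref{lem:L2_orth_spec}: the symbols $\mathcal Q^{R,s}_{\theta,v}(y,\eta)$ have bounded overlap in $(y,\eta)$-phase space, and the $L^2$ calculus for symbols of type $(\tfrac12+\varepsilon_0, \tfrac12-\varepsilon_0)$ together with a Cotlar--Stein-type argument yields both inequalities; summing the rapidly decaying error from the wave-packet expansion over the $O(\lambda^{O(1)})$ many packets is harmless because it is still RapDec.

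The only subtlety I anticipate, more a matter of bookkeeping than of substance, is verifying that all implicit constants (the overlap constants for the partitions of unity and the operator norms from the symbol calculus) remain bounded uniformly in $s$ across the admissible range $s \in [R^{-1/2+\varepsilon_0}, 1]$. This is guaranteed by the lower bound $s \gtrsim R^{-1/2+\varepsilon_0}$, which ensures $s^2 R \ge R^{2\varepsilon_0}$ is large and places the symbols legitimately in the $(\tfrac12+\varepsilon_0,\tfrac12-\varepsilon_0)$ class. I do not expect any essential obstacle beyond this uniformity check.
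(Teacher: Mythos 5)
Your proof is correct and follows essentially the same route the paper takes: in the H\"ormander case the paper also argues via Plancherel together with the bounded overlap of the frequency cutoffs $\{\beta(s^{-1}(\cdot-\xi_\theta))\}_\theta$ and the physical-space cutoffs $\{\beta(s^{-1}R^{-1}(\cdot-Rv))\}_v$, and in the spectral-projector case it invokes the $L^2$ boundedness and bounded phase-space overlap of the type $(\tfrac12+\varepsilon_0,\tfrac12-\varepsilon_0)$ symbols, exactly as in Lemma \ref{lem:L2_orth_spec}. Your uniformity remark is the right thing to check and is precisely what the constraint $s\ge R^{-1/2+\varepsilon_0}$ guarantees.
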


\begin{lemma}[Wave-packet support]\label{lem:WSI}
\label{wpt-spt-lem} For a fixed $s\in[R^{-1/2+\varepsilon_0},1]$,
$\mathcal T^\lambda f^{R,s}_{\theta,v}$ is concentrated in the tube $T^{R,s}_{\theta,v}$, in the sense that,
\[
\bigl|\mathcal T^\lambda f^{R,s}_{\theta,v}(x)\bigr|={\rm RapDec}(R)\|f\|_{L^2}
\quad\text{whenever } x\in B_R\setminus T^{R,s}_{\theta,v}.
\]
\end{lemma}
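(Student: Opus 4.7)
The statement is the common formulation of Lemmas~\ref{lem:wp_support} and~\ref{lem:wp_support_spec}, so the plan is to re-run the nonstationary-phase argument once in the unified notation, flagging the place where the two settings diverge. First I would expand $\mathcal T^\lambda f^{R,s}_{\theta,v}(x)$ as an oscillatory integral whose amplitude has $\xi$-support in a cube of side $\sim s$ centered at $\xi_\theta$ and has already absorbed (via Fourier inversion) the spatial bump that localizes the input at scale $\sim sR$ about $Rv$. In the H\"ormander case this is literal, as in the proof of Lemma~\ref{lem:wp_support}. In the spectral-projector case one writes $\mathcal Q^{R,s}_{\theta,v}(y,D)f$ in Kohn--Nirenberg form and uses the compact $\xi$-support of $\mathcal Q^{R,s}_{\theta,v}(y,\eta)$ (at scale $s$ in $\omega'(y,\eta)$) together with the compact $y$-support (at scale $s$ in $v(y,\eta)$), exchanging orders of integration to expose an oscillatory integral of exactly the same form.

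Second, I would identify the $\xi$-gradient of the phase as $\partial_\xi\phi^\lambda(x,\xi)-y$, with $\phi(x,y)=d_g(x,y)$ in the projector case. Using the defining property
\[
\partial_\xi\phi^\lambda\bigl((\gamma^\lambda_{\theta,v}(x_n),x_n),\xi_\theta\bigr)=Rv,
\]
(respectively the analogous geodesic-flow identity $\gamma^\lambda_{\theta,v}(0)=(v,0)$, $\dot\gamma^\lambda_{\theta,v}(0)=\omega_\theta$) combined with \H and the mean value theorem in $x'$, I would deduce
\[
\bigl|\partial_\xi\phi^\lambda(x,\xi)-Rv\bigr|
\gtrsim \bigl|x'-\gamma^\lambda_{\theta,v}(x_n)\bigr|
\]
uniformly for $|\xi-\xi_\theta|\lesssim s$. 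If $x\notin T^{R,s}_{\theta,v}$ then the right-hand side exceeds $C_0 sR$, and since $|y-Rv|\lesssim sR$, choosing $C_0$ large yields $|\partial_\xi\phi^\lambda(x,\xi)-y|\gtrsim sR$.

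Third, I would run repeated integration by parts in $\xi$ using
\[
L=\frac{1}{2\pi i}\,\frac{(\partial_\xi\phi^\lambda-y)\cdot\nabla_\xi}{|\partial_\xi\phi^\lambda-y|^2}.
\]
Each application redistributes one derivative onto the amplitude (which varies at scale $s$ in $\xi$) and gains a factor $(s\cdot sR)^{-1}=(s^2R)^{-1}\le R^{-2\varepsilon_0}$ by the standing hypothesis $s\ge R^{-1/2+\varepsilon_0}$. Iterating $N$ times and estimating the remaining integrals trivially by Cauchy--Schwarz in $y$ and $\xi$ yields the claimed $\mathrm{RapDec}(R)\,\|f\|_{L^2}$ bound.

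The only delicate point I anticipate is in the projector setting, where the symbol $\mathcal Q^{R,s}_{\theta,v}$ lies in the exotic class $S^0_{1/2+\varepsilon_0,\,1/2-\varepsilon_0}$: each $\xi$-derivative costs $s^{-1}$ and each $y$-derivative costs $s$. The integration-by-parts gains in the third step must be book-kept against these costs, but the net gain per iteration is still $(s^2R)^{-1}$, which is a strict power of $R^{-\varepsilon_0}$ precisely because $s\ge R^{-1/2+\varepsilon_0}$ keeps the cap $\theta$ wider than the critical frequency scale $\lambda^{-1/2}$. Once this accounting is verified, the unified argument collapses to the computation already carried out in Lemma~\ref{lem:wp_support}, and the stated decay follows.
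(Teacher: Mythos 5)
Your proposal is correct and takes essentially the same approach as the paper: the unified lemma is established by re-running the nonstationary-phase argument of Lemma~\ref{lem:wp_support} in both settings, exactly as the paper does when it states that the proof of Lemma~\ref{lem:wp_support_spec} is ``essentially the same'' and then packages both into Lemma~\ref{lem:WSI}. One small slip in your final bookkeeping paragraph: in the exotic class $S^0_{1/2+\varepsilon_0,\,1/2-\varepsilon_0}$ each $y$-derivative \emph{also} costs $s^{-1}$ (not $s$), but since the integration by parts is carried out only in the $\xi$ (or $\eta$) variable, this does not affect the $(s^2R)^{-1}\le R^{-2\varepsilon_0}$ gain per iteration, and the argument goes through as you describe.
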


In unified notation, the microlocal Kakeya--Nikodym norm is given by
\[
\|f\|_{\MKN(R)}
:=\sup_{R^{-1/2+\varepsilon_0}\le s\le 1}\ \sup_{T^s}
s^{-\frac{n-1}{2}}\,\|f_{T^s}\|_{L^2}.
\]
This norm is closely related to the notation ``$s$-dimensional ball" condition in incidence geometry.
See, for example, \cite[Remark 4.8]{gan2025local}. 
In particular, we have the following lemma.
\begin{lemma}\label{KN-L2-lem}
Let $f=\sum_{T\in\mathbb T} f_T$ be a sum of wave-packets at scale $s=R^{-1/2+\varepsilon_0}$. Then
\begin{equation}\label{eq:MKNbound}
    \|f\|_{L^2}^2 \lesssim R^{O(\varepsilon_0)}\,
    \frac{\#\mathbb T}{R^{\frac{n-1}{2}}}\,
    \|f\|_{\MKN(R)}^2.
\end{equation}
\end{lemma}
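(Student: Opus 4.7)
The plan is to observe that Lemma \ref{KN-L2-lem} is a direct bookkeeping consequence of the two wave-packet facts established earlier, namely almost-orthogonality at a fixed scale (Lemma \ref{lem Orthogonal}) and the pointwise bound on each individual wave-packet implied by the definition of the microlocal Kakeya--Nikodym norm. No oscillation or stationary-phase input is needed; this is purely an $L^2$ counting argument.

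First I would apply the upper bound in Lemma \ref{lem Orthogonal} to the sum $f=\sum_{T\in\mathbb T}f_T$ at the fixed scale $s=R^{-1/2+\varepsilon_0}$. The orthogonality there is stated for the full decomposition, but only the upper half ``$\lesssim$'' is needed here and it comes from the bounded overlap of the frequency and spatial bump functions defining the $Q^{R,s}_{\theta,v}$; restricting to any subfamily $\mathbb T$ preserves this bounded-overlap property. Thus
\[
\|f\|_{L^2}^2 \;\lesssim\; \sum_{T\in\mathbb T}\|f_T\|_{L^2}^2.
\]

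Second, I would invoke the definition of the microlocal Kakeya--Nikodym norm. Since $s=R^{-1/2+\varepsilon_0}$ lies in the admissible dyadic range and every $T\in\mathbb T$ is a wave-packet at this scale, the supremum defining $\|f\|_{\MKN(R)}$ yields, for each individual $T$,
\[
\|f_T\|_{L^2} \;\le\; s^{\frac{n-1}{2}}\,\|f\|_{\MKN(R)}.
\]
Squaring and summing over $T\in\mathbb T$ gives
\[
\sum_{T\in\mathbb T}\|f_T\|_{L^2}^2 \;\le\; \#\mathbb T \cdot s^{n-1}\,\|f\|_{\MKN(R)}^2.
\]

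Finally, substituting $s^{n-1}=R^{-(n-1)/2+(n-1)\varepsilon_0}$ and absorbing the factor $R^{(n-1)\varepsilon_0}$ into $R^{O(\varepsilon_0)}$ yields the claimed inequality. There is no real obstacle here beyond keeping the exponent bookkeeping consistent; the only subtlety worth noting is that the argument works because we have chosen the critical scale $s=R^{-1/2+\varepsilon_0}$, at which the single-tube $L^2$ bound extracted from $\|f\|_{\MKN(R)}$ has exactly the right homogeneity to produce the factor $R^{-(n-1)/2}$ after summing over $\#\mathbb T$ tubes. The same argument at any other admissible $s$ would produce a weaker inequality, which is consistent with the incidence-geometric interpretation of $\|\cdot\|_{\MKN(R)}$ as an ``$s$-dimensional ball'' condition referenced in the remark preceding the lemma.
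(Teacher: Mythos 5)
Your proof is correct and follows essentially the same route as the paper's: both use $L^2$ almost-orthogonality at the fixed scale $s=R^{-1/2+\varepsilon_0}$ to pass to $\sum_T\|f_T\|_{L^2}^2$, then bound each summand via the definition of $\|\cdot\|_{\MKN(R)}$ and substitute $s^{n-1}=R^{-(n-1)/2+O(\varepsilon_0)}$. The only cosmetic difference is that the paper replaces the sum by $\#\mathbb T\cdot\sup_T\|f_T\|_{L^2}^2$ before invoking the $\MKN$ bound, whereas you bound each term directly; these are the same estimate.
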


\begin{proof}
By $L^2$ almost-orthogonality \eqref{eq:ortho1} at scale $s=R^{-1/2+\varepsilon_0}$,
\[
    \|f\|_{L^2}^2 \sim \sum_{T\in \mathbb{T}}\|f_{T}\|_{L^2}^2
    \lesssim R^{O(\varepsilon_0)}\,\#\mathbb T\ \sup_{T\in\mathbb T}\|f_T\|_{L^2}^2.
\]
By the definition of the microlocal Kakeya--Nikodym norm at scale $s$,
\[
\|f_T\|_{L^2}\lesssim s^{\frac{n-1}{2}}\ \|f\|_{\MKN(R)}\qquad\text{for each }T.
\]
Hence
\[
\sup_{T\in\mathbb T}\|f_T\|_{L^2}^2
\lesssim s^{\,n-1}\,\|f\|_{\MKN(R)}^2
=R^{-\frac{n-1}{2}+O(\varepsilon_0)}\,\|f\|_{\MKN(R)}^2.
\]
Combining the last two inequalities yields \eqref{eq:MKNbound}.
\end{proof}

\begin{table}[t]
\centering
\caption{Notation for the general cases.}
\label{tab:notation-general}
{\setlength{\tabcolsep}{5pt}
 \renewcommand{\arraystretch}{1.05}
 \small
\begin{tabular}{|l|c|c|c|}
\hline
 & H\"ormander operator & Spectral projector & Unified notation \\
\hline
Operator
& $\mathcal H^\lambda$
& $\chi^\lambda$
& $\mathcal T^\lambda$ \\
Curve at scale 1
& $\gamma_{\theta}(v,{}\cdot{})$ from $\phi$
& Geodesic $\gamma_{\theta,v}$
& $\gamma_{\theta,v}$ \\
Curve at scale $\lambda$
& $\gamma^\lambda_{\theta}(v,{}\cdot{})$ from $\phi$
& Geodesic $\gamma^\lambda_{\theta,v}$
& $\gamma^\lambda_{\theta,v}$ \\
Tube
& $T^{R,s}_{\theta,v}$
& $T^{R,s}_{\theta,v}$
& $T:=T^s:=T^{R,s}_{\theta,v}$ \\
Decomposition
& $f=\sum Q^{R,s}_{\theta,v}f$
& $f=\sum \mathcal Q^{R,s}_{\theta,v}f$
& $f=\sum f^{R,s}_{\theta,v}=\sum f_T$ \\
Wave-packets
& $\mathcal H^\lambda Q^{R,s}_{\theta,v}f$
& $\chi^\lambda \mathcal Q^{R,s}_{\theta,v}f$
& $\mathcal T^\lambda f^{R,s}_{\theta,v}=\mathcal T^\lambda f_T$ \\
$\MKN$ norm
& $\|{}\cdot{}\|_{\MKN_{\text{\rm H\"orm}}(R)}$
& $\|{}\cdot{}\|_{\MKN_{\mathrm{spec}}(R)}$
& $\|{}\cdot{}\|_{\MKN(R)}$ \\
\hline
\end{tabular}
}
\end{table}

\smallskip

We also record, in Table \ref{tab:notation-straight}, the unified notation used in the parts of the proof where anisotropic microlocal Kakeya--Nikodym norms are needed, and we state the corresponding lemmas as follows.

\begin{lemma}[Wave-packet support (anisotropic)]\label{lem:WSA}For a fixed $\vec s=(\mathbf s; U)$ where $\mathbf  s\in[R^{-1/2+\varepsilon_0},1]^{n-1}$ and $U\in O(n-1)$,
$\mathcal T^\lambda f^{R,\vec s}_{\theta,v}$ is essentially concentrated in the box $\Box^{R,\vec s}_{\theta,v}$, in the sense that
\[
\bigl|\mathcal T^\lambda f^{R,\vec s}_{\theta,v}(x)\bigr|={\rm RapDec}(R)\|f\|_{L^2}
\quad\text{whenever } x\in B_R\setminus\Box^{R,\vec s}_{\theta,v}.
\]
\end{lemma}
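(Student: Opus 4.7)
The plan is to run the proof of Lemma~\ref{lem:wp_support} in the anisotropic setting, replacing the scalar cutoffs by the oriented product bumps $\beta_{\vec s}$ and the single scale $s$ by the tuple $(s_1,\dots,s_{n-1})$ together with the rotation $U\in O(n-1)$. I would describe the argument in the extension model, where $\phi^\lambda(x,\xi)=x'\cdot\xi+x_n\psi(\xi)$ and the wave-packets propagate along the straight lines $\gamma^\lambda_{\theta,v}(x_n)=Rv-x_n\nabla\psi(\xi_\theta)$; the constant-curvature spectral-projector case is identical after passing to the chart that straightens geodesics and invoking the pseudodifferential calculus of type $(\tfrac12+\varepsilon_0,\tfrac12-\varepsilon_0)$ for the symbols $\mathcal Q^{R,\vec s}_{\theta,v}(y,D)$.

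First I would write
\begin{equation*}
\mathcal T^\lambda f^{R,\vec s}_{\theta,v}(x)=\iiint e^{2\pi i(\phi^\lambda(x,\xi)-y\cdot(\xi-\eta))}a^\lambda(x,\xi)\,\beta_{\vec s}\bigl(R^{-1}(y-Rv)\bigr)\,\beta_{\vec s}(\eta-\xi_\theta)\,f(\eta)\,d\eta\,dy\,d\xi,
\end{equation*}
and extract two localization facts. The frequency cutoff forces $|[U^{-1}(\eta-\xi_\theta)]_i|\lesssim s_i$ for each $i$. The $y$-integral against $\beta_{\vec s}(R^{-1}(y-Rv))$ is the Fourier transform of an anisotropic bump, which decays rapidly outside the dual box $|s_iR\,[U^{-1}(\xi-\eta)]_i|\lesssim 1$; since $(s_iR)^{-1}\le R^{-1/2-\varepsilon_0}\ll s_i$, we may, up to ${\rm RapDec}(R)\|f\|_{L^2}$ errors, assume $|[U^{-1}(\xi-\xi_\theta)]_i|\lesssim s_i$ componentwise.

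Next, suppose $x\in B_R\setminus\Box^{R,\vec s}_{\theta,v}$. By the definition of $\Box^{R,\vec s}_{\theta,v}$, there exists an index $i$ in the rotated transverse frame with $|[U^{-1}(x'-\gamma^\lambda_{\theta,v}(x_n))]_i|>C_0\,s_iR$. Using $\partial_\xi\phi^\lambda(x,\xi)=x'+x_n\nabla\psi(\xi)$, Taylor-expanding $\nabla\psi$ about $\xi_\theta$, and invoking strict convexity together with the frequency localization, I would show
\begin{equation*}
\bigl|[U^{-1}(\partial_\xi\phi^\lambda(x,\xi)-Rv)]_i\bigr|\ge C_0\,s_iR-C\,s_iR\gtrsim s_iR,
\end{equation*}
for $C_0$ large enough in terms of $\|\mathrm{Hess}\,\psi\|_\infty$. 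Combined with $|[U^{-1}(y-Rv)]_i|\lesssim s_iR$ from the spatial bump, the $\xi$-phase is nonstationary of order $s_iR$ in the rotated direction $[U^{-1}\nabla_\xi]_i$. Integrating by parts $N$ times in this direction gives a gain of order $(s_i^2R)^{-N}\le R^{-2N\varepsilon_0}$ per step (each application divides by the nonstationary gradient of size $s_iR$ and differentiates the amplitude at cost $s_i^{-1}$), and iteration produces the desired ${\rm RapDec}(R)\|f\|_{L^2}$ bound.

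The one genuine subtlety beyond Lemma~\ref{lem:wp_support}, and what I expect to be the main obstacle, is the anisotropic Taylor comparison above. Naively, the $i$-th rotated component of $x_n(\nabla\psi(\xi)-\nabla\psi(\xi_\theta))$ is controlled only by $R\max_j s_j$ rather than by $s_iR$, which is insufficient when some $s_j\gg s_i$. This is precisely why $\Box^{R,\vec s}_{\theta,v}$ must be built with transverse axes aligned so that the action of $\mathrm{Hess}\,\psi(\xi_\theta)$ on the anisotropic frequency cell matches the anisotropic spatial cell; equivalently, $U$ is chosen (or a small linear change of frequency variable is performed) so that $U^{-1}\mathrm{Hess}\,\psi(\xi_\theta)\,U$ is essentially diagonal on the support. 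With this alignment the cross terms vanish at leading order and the componentwise bound above is restored. In the constant-curvature spectral-projector case, the analogous alignment is guaranteed by the isometries of the model, and the integration-by-parts step is implemented via Egorov's theorem along the straightened geodesic flow, reducing the analysis to the Euclidean situation just described.
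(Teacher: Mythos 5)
The paper does not give an explicit proof of Lemma~\ref{lem:WSA}; it simply asserts that the anisotropic support property ``holds'' for the families $E_\Sigma(Q^{R,\vec s}_{\theta,v}f)$ and $\chi^\lambda(\mathcal Q^{R,\vec s}_{\theta,v}f)$, treating the statement as a routine variant of Lemma~\ref{lem:wp_support}. Your outline correctly mirrors that lemma's stationary-phase/integration-by-parts scheme, the anisotropic frequency and spatial localizations in steps one through four are right, and you have genuinely identified the one nontrivial point, namely the anisotropic comparison between $x_n\bigl(\nabla\psi(\xi)-\nabla\psi(\xi_\theta)\bigr)$ and the transverse scales $s_iR$.

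That said, the fix you sketch does not close the gap, for two reasons. First, $U$ is part of the data $\vec s=(\mathbf s;U)$ over which $\|\cdot\|_{\widetilde\MKN(R)}$ takes a supremum, so you are not free to choose it to diagonalize $\mathrm{Hess}\,\psi(\xi_\theta)$. The correct reading is that the \emph{orientation} of $\Box^{R,\vec s}_{\theta,v}$ is an output, not an input: the transverse cross-section is the Minkowski sum of the dilated spatial cutoff $RU\,\mathrm{diag}(s_i)\,B_1$ with $R\bigl(\nabla\psi(\text{cell})-\nabla\psi(\xi_\theta)\bigr)$, and the \emph{linear} part of the latter is $R\,\mathrm{Hess}\,\psi(\xi_\theta)\,U\,\mathrm{diag}(s_i)\,B_1$. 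Eigenvalue interlacing for $\mathrm{diag}(s_i)U^T\,\mathrm{Hess}\,\psi(\xi_\theta)^2\,U\,\mathrm{diag}(s_i)$ shows the singular values of $\mathrm{Hess}\,\psi(\xi_\theta)U\,\mathrm{diag}(s_i)$ are comparable to the $s_i$, so this linear image is contained in a box of side lengths $\sim s_iR$ whose axes are determined by the SVD and need not be the $U$-frame. No alignment hypothesis is needed for this step.

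Second, and more substantively, your fix does not address the quadratic term $\tfrac12\partial^3\psi(\xi_\theta)[\xi-\xi_\theta]^2$ in the Taylor expansion of $\nabla\psi$. Over the anisotropic cell this error is a full quadratic form of size $O\bigl((\max_j s_j)^2\bigr)$ in whatever direction it points, and after multiplying by $|x_n|\le R$ it contributes up to $R(\max_j s_j)^2$ to the $i$-th transverse coordinate. This exceeds $s_iR$ whenever $(\max_j s_j)^2 > s_i$, which happens precisely for very anisotropic $\vec s$ (e.g.\ $s_i\sim R^{-1/2+\varepsilon_0}$ and $\max_j s_j\sim 1$). For the exact paraboloid $\psi(\xi)=|\xi|^2/2$ the cubic term vanishes and your argument closes; for a general strictly convex $\psi$ (or the straightened constant-curvature phase) it does not, and your phrase ``cross terms vanish at leading order'' concedes rather than resolves this. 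You should either define $\Box^{R,\vec s}_{\theta,v}$ directly from $\nabla\psi(\text{cell})$ and check that the later incidence argument (Lemma~\ref{KN-L2-lem2}) only needs its volume and convexity, or explain a Hessian-normalizing affine change of variables on the support of the cutoff together with a bound on the residual cubic error in terms of $s_i$, and be explicit about which route is being taken.
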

In unified notation, the anisotropic microlocal Kakeya--Nikodym norm is given by
\[
\|f\|_{\widetilde\MKN(R)}
:=\sup_{\vec s}\, \sup_{\Box^{\vec s}}
\ \Bigl(\prod_{i=1}^{n-1}s_i^{-1/2}\Bigr)\,\|f_{\Box^{\vec s}}\|_{L^2}.
\]
This anisotropic norm is closely related to the following definition in incidence geometry.

\begin{table}[t]
\centering
\caption{Additional notation for the constant-curvature cases.}
\label{tab:notation-straight}
{\setlength{\tabcolsep}{5pt}
 \renewcommand{\arraystretch}{1.05}
 \small
\begin{tabular}{|l|c|c|c|}
\hline
 & Euclidean space & Compact space form & Unified notation \\
\hline
Operator
& $E_\Sigma$
& $\chi^\lambda$
& $\mathcal T^\lambda$ \\
Line at scale 1
& Straight line $\gamma_{\theta,v}$
& Straight line $\gamma_{\theta,v}$
& Straight line $\gamma_{\theta,v}$ \\
Line at scale $\lambda$
& Straight line $\gamma^\lambda_{\theta,v}$
& Straight line $\gamma^\lambda_{\theta,v}$
& Straight line $\gamma^\lambda_{\theta,v}$ \\
Box
& $\Box^{R,\vec s}_{\theta,v}$
& $\Box^{R,\vec s}_{\theta,v}$
& $\Box:=\Box^{\vec s}:=\Box^{R,\vec s}_{\theta,v}$ \\
Wave-packets
& $E_\Sigma Q^{R,\vec s}_{\theta,v}f$
& $\chi^\lambda \mathcal Q^{R,\vec s}_{\theta,v}f$
& $\mathcal T^\lambda f_\Box:=\mathcal T^\lambda f^{R,\vec s}_{\theta,v}$ \\
$\widetilde{\MKN}$ norm
& $\|{}\cdot{}\|_{\widetilde{\MKN}(R)}$
& $\|{}\cdot{}\|_{\widetilde{\MKN}_{\mathrm{spec}}(R)}$
& $\|{}\cdot{}\|_{\widetilde{\MKN}(R)}$ \\
\hline
\end{tabular}
}
\end{table}

\begin{definition}
Let $\ZT$ be a family of unit-length $\de$-tubes in $\ZR^n$.
We say $\ZT$ {\bf obeys Wolff's axiom with error $m$} if any $1\times s_1\times\cdots\times s_{n-1}$ rectangular box contains $\lesssim m\cdot s_1\cdots s_{n-1}\,\de^{-(n-1)}$ many $\de$-tubes from $\ZT$.
\end{definition}

In fact, if $f=\sum_{T\in\ZT}f_T$ is a sum of scale $s=R^{-1/2+\e_0}$ wave-packets, then $\|f\|_{\widetilde{\MKN}(R)}$ can be used to control the error $m$, where $m$ is defined so that the $R^{-1}$-dilate of the tubes $T\in\ZT$ satisfies Wolff's axiom (up to the harmless factor $R^{O(\e_0)}$ coming from $\de=R^{-1/2+\e_0}$).
This is shown in the next lemma.
\begin{lemma}\label{KN-L2-lem2}
Let $f=\sum_{T\in\ZT}f_T$ be a sum of wave-packets at scale $s=R^{-1/2+\varepsilon_0}$ such that $\|f_T\|_2$ are comparable for all $T\in\ZT$.
Define
\[
m:=\sup_{\substack{\Box:\ R\times Rs_1\times\cdots\times Rs_{n-1}\text{-box}\\ s_j\in[R^{-1/2+\e_0},\,1]}}
\frac{\#\{T\in\ZT:\ T\subset \Box\}}{R^{-\frac{n+1}{2}}|\Box|}\,.
\]
Then, up to a factor $R^{O(\e_0)}$, the $R^{-1}$-dilate of $\ZT$ obeys Wolff's axiom with error $m$, and
\[
\|f\|_2^2\lesssim R^{O(\e_0)}\,
\frac{\#\mathbb{T}}{m\,R^{\frac{n-1}{2}}}\,
\|f\|_{\widetilde{\MKN}(R)}^2.
\]
\end{lemma}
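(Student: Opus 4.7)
The plan is to derive both claims of Lemma \ref{KN-L2-lem2} from the same comparison between two wave-packet decompositions: the isotropic one at the tube scale $s = R^{-1/2+\varepsilon_0}$ underlying $\ZT$, and the anisotropic one at a scale $\vec s$ that essentially realizes the supremum in the definition of $m$. The Wolff axiom claim is a direct unpacking of definitions, while the $L^2$ estimate exploits the comparability of the $\|f_T\|_{L^2}$ together with the compatibility of the two decompositions.

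For the Wolff axiom, I would first note that after the $R^{-1}$-dilation each $T \in \ZT$ becomes a $\delta$-tube with $\delta = R^{-1/2+\varepsilon_0}$ of unit length, while a $1 \times s_1 \times \cdots \times s_{n-1}$ rectangular box lifts to an $R \times Rs_1 \times \cdots \times Rs_{n-1}$ box $\Box$ of volume $|\Box| = R^n \prod_i s_i$. The definition of $m$ then directly yields
\[
\#\{T \in \ZT : T \subset \Box\} \le m R^{-(n+1)/2} |\Box| = m R^{(n-1)/2} \prod_i s_i,
\]
and since $\delta^{-(n-1)} = R^{(n-1)/2 - (n-1)\varepsilon_0}$, this is exactly Wolff's axiom with error $m$ modulo the admissible factor $R^{O(\varepsilon_0)}$.

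For the $L^2$ bound, I would select a box $\Box_0$ of dimensions $R \times Rs_1 \times \cdots \times Rs_{n-1}$ with some orientation $U \in O(n-1)$ essentially attaining the supremum in the definition of $m$, so that
\[
\#\{T \in \ZT : T \subset \Box_0\} \gtrsim R^{-O(\varepsilon_0)}\, m\, R^{(n-1)/2} \prod_i s_i,
\]
and then apply the anisotropic wave-packet decomposition $f = \sum_\Box f_\Box$ at the matching scale $\vec s = (\mathbf s; U)$. Because $s \le s_i$ for every $i$, the isotropic decomposition refines the anisotropic one, so each $T \in \ZT$ is essentially contained in a unique anisotropic box, and almost-orthogonality (Lemma \ref{lem Orthogonal} generalized to scale $\vec s$) gives $\|f_\Box\|_{L^2}^2 \sim \sum_{T \in \ZT : T \subset \Box} \|f_T\|_{L^2}^2$ for every $\Box$. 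Denoting by $\alpha$ the common value (up to constants) of $\|f_T\|_{L^2}$ over $T \in \ZT$, on the one hand $\|f\|_{L^2}^2 \sim (\#\ZT)\,\alpha^2$, while on the other hand
\[
\|f_{\Box_0}\|_{L^2}^2 \sim \#\{T \subset \Box_0\}\, \alpha^2 \gtrsim R^{-O(\varepsilon_0)}\, m\, R^{(n-1)/2} \prod_i s_i\, \alpha^2.
\]
The definition of the anisotropic microlocal Kakeya--Nikodym norm applied to $\Box_0$ furthermore gives $\|f_{\Box_0}\|_{L^2}^2 \le \prod_i s_i \cdot \|f\|_{\widetilde{\MKN}(R)}^2$. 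Comparing the two bounds yields $\alpha^2 \lesssim R^{O(\varepsilon_0)} (m R^{(n-1)/2})^{-1} \|f\|_{\widetilde{\MKN}(R)}^2$, and multiplying by $\#\ZT$ gives the desired estimate.

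The main point requiring care is the compatibility of the two wave-packet decompositions in the identity $\|f_\Box\|_{L^2}^2 \sim \sum_{T \subset \Box} \|f_T\|_{L^2}^2$. This uses the fact that the isotropic scale $s$ is finer than every component of $\vec s$, so each tube essentially sits inside a unique anisotropic box, and the identity holds modulo rapidly decaying wave-packet tails, contributing at most an $R^{O(\varepsilon_0)}$ loss absorbed into the stated bound. The comparability hypothesis on $\|f_T\|_{L^2}$ is essential so that the density of tubes inside $\Box_0$ translates directly into a lower bound on $\|f_{\Box_0}\|_{L^2}^2$; without it, one would lose the gain of the factor $m$.
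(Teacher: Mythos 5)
Your proposal is correct and follows essentially the same route as the paper: unwind the definition of $m$ for the Wolff-axiom claim, select a box $\Box_0$ realizing the supremum, and play the lower bound on $\#\{T\subset\Box_0\}$ off against the $\widetilde{\MKN}$ bound $\|f_{\Box_0}\|_{L^2}^2\le\prod_i s_i\,\|f\|_{\widetilde{\MKN}(R)}^2$ at the scale $\vec s$ of $\Box_0$. The one step you make explicit that the paper's brief write-up glosses over is the compatibility identity $\|f_{\Box_0}\|_{L^2}^2\gtrsim\sum_{T\subset\Box_0}\|f_T\|_{L^2}^2$ between the isotropic and anisotropic decompositions; this is indeed what is needed to convert tube density in $\Box_0$ into a lower bound on $\|f_{\Box_0}\|_{L^2}^2$, and your justification (the isotropic scale $s$ is finer than every $s_i$, so each $T$ sits essentially inside a unique $\Box$, up to rapidly decaying tails) is the correct reason why it holds.
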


\begin{proof}
Let $\gamma>0$ be such that $\|f_T\|_2\sim \gamma$ for all $T\in\ZT$.
By $L^2$ orthogonality at the fixed scale $s$,
\[
\|f\|_2^2\sim \sum_{T\in\ZT}\|f_T\|_2^2 \sim (\#\ZT)\,\gamma^2.
\]

By the definition of $m$, there exists a box $\Box$ of the form $R\times Rs_1\times\cdots\times Rs_{n-1}$ for which
\[
\#\{T\in\ZT:\ T\subset \Box\}\gtrsim R^{-\frac{n+1}{2}}|\Box|\, m
\sim R^{\frac{n-1}{2}}\,s_1s_2\cdots s_{n-1}\,m,
\]
up to a factor $R^{O(\e_0)}$, arising from taking $\de=R^{-1/2+\e_0}$ when comparing with the $\de^{-(n-1)}$ term in Wolff's axiom.

By the definition of the anisotropic microlocal Kakeya--Nikodym norm,
\[
\#\{T\in\ZT:\ T\subset \Box\}\cdot\gamma^2\lesssim(s_1s_2\cdots s_{n-1})\,\|f\|_{\widetilde{\MKN}(R)}^2.
\]
Combining with the upper bound on $s_1\cdots s_{n-1}$ yields
\[
\gamma^2\lesssim\frac{1}{m\,R^{\frac{n-1}{2}}}\,\|f\|_{\widetilde{\MKN}(R)}^2,
\]
again up to a factor $R^{O(\e_0)}$.

Finally, multiply both sides by $\#\ZT$ to obtain
\[
(\#\ZT)\,\gamma^2\lesssim R^{O(\e_0)}\,
\frac{\#\ZT}{m\,R^{\frac{n-1}{2}}}\,\|f\|_{\widetilde{\MKN}(R)}^2,
\]
which is the desired estimate because $\|f\|_2^2\sim (\#\ZT)\gamma^2$.
\end{proof}

\smallskip

Now, in unified notation, we state two propositions that imply our main theorems.
\begin{proposition}[Microlocal Kakeya--Nikodym estimates for the general case]\label{prop main1}
For every $\varepsilon>0$ and $\lambda\ge1$, and for all $2\,\frac{3n+1}{3n-3}\le p\le2\,\frac{n+1}{n-1}$,
\begin{equation}\label{eq:prop1}
    \|\mathcal T^\lambda f\|_{L^p(B^n_\lambda)}\lesssim_\varepsilon\ \lambda^\varepsilon\, \|f\|_{L^2(B^{n-1}_1)}^{2-\frac{2(n+1)}{p(n-1)}}\,
    \|f\|_{\MKN(\lambda)}^{\frac{2(n+1)}{p(n-1)}-1}.
\end{equation}
\end{proposition}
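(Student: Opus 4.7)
The plan is to prove Proposition \ref{prop main1} by induction on scales, following the broad-narrow framework of Bourgain--Guth and Guth--Hickman--Iliopoulou \cite{GHI}, but replacing the $L^\infty$-type quantity that appears in their induction by the microlocal Kakeya--Nikodym norm. First, observe that at the upper endpoint $p_1 = 2(n+1)/(n-1)$ the exponent $\tfrac{2(n+1)}{p_1(n-1)} - 1$ vanishes, so \eqref{eq:prop1} reduces to the classical $L^2 \to L^{p_1}$ bound of Stein for positive-definite Carleson--Sj\"olin phases; the intermediate values of $p$ follow from the endpoint by $L^p$-interpolation, using log-convexity of $\|f\|_{L^2}^{2-\alpha}\|f\|_{\MKN(\lambda)}^{\alpha-1}$ in the appropriate exponent. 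It therefore suffices to prove the bound at the critical endpoint $p_0 = 2(3n+1)/(3n-3)$, with $\alpha_0 = 2(n+1)/(p_0(n-1)) = 3(n+1)/(3n+1)$. Define
\[
A(R) := \sup_{B_R\subset B_\lambda,\,f \ne 0} \frac{\|\mathcal T^\lambda f\|_{L^{p_0}(B_R)}}{\|f\|_{L^2}^{2-\alpha_0}\,\|f\|_{\MKN(R)}^{\alpha_0-1}}.
\]
The goal is to show $A(R) \lesssim_\varepsilon R^\varepsilon$ by induction on $R$, from which the conclusion follows upon specializing to $R = \lambda$.

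For the inductive step, fix a large parameter $K = R^{\delta}$ with some small $\delta$ comparable to $\varepsilon_0$, decompose $B_1^{n-1}$ into $K^{-1}$-caps $\{\tau\}$, and write $f = \sum_\tau f_\tau$. On each ball $B_{K^2} \subset B_R$ apply the Bourgain--Guth broad-narrow dichotomy: either $|\mathcal T^\lambda f|$ is controlled (up to an acceptable loss) by a $k$-broad norm at scale $K$, or its mass is essentially concentrated in a single narrow cap. In the broad case, I invoke the sharp $k$-broad $L^{p_0}$ estimate of Guth--Hickman--Iliopoulou \cite{GHI}, which directly furnishes the bound with an $L^2$-type right-hand side; the $L^2$-to-$\MKN$ comparison provided by Lemma \ref{KN-L2-lem}, applied at the minimal scale $s = R^{-1/2+\varepsilon_0}$, then upgrades this to the desired mixed-norm expression $\|f\|_{L^2}^{2-\alpha_0}\|f\|_{\MKN(R)}^{\alpha_0-1}$, with plenty of room to spare.

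For the narrow contribution, I apply the variable-coefficient refined decoupling estimate from \cite{beltran2020variable, iosevich2022microlocal, gan2025local} to the scale-$s$ wave-packet decomposition of $\sum_\tau \mathcal T^\lambda f_\tau$, reducing the $L^{p_0}(B_R)$ norm to a weighted sum over caps of $\|\mathcal T^\lambda f_\tau\|_{L^{p_0}}$. Variable-coefficient parabolic rescaling as in \cite{GHI} identifies each $f_\tau$-piece with a H\"ormander operator of the same type at physical scale $R/K^2$, so that the inductive hypothesis yields, after unraveling, a bound of the form $C_\varepsilon (R/K^2)^\varepsilon \|f_\tau\|_{L^2}^{2-\alpha_0}\|f_\tau\|_{\MKN(R/K^2)}^{\alpha_0-1}$. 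Summing over caps via $L^2$-orthogonality, and tracking the transformation of $\|f_\tau\|_{\MKN(R/K^2)}$ under the rescaling back to $\|f\|_{\MKN(R)}$, produces the inductive inequality; the net gain of the form $K^{-\eta}$ for some $\eta > 0$ absorbs the $K^{O(\varepsilon_0)}$ decoupling loss provided $\delta$ and $\varepsilon_0$ are small enough relative to $\varepsilon$, closing the induction.

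The principal technical obstacle lies precisely in the narrow-case bookkeeping. In the pure $(p,p)$ theory of \cite{GHI}, the Bourgain--Demeter decoupling loss is absorbed by a strict gain from parabolic rescaling, but this gain vanishes in the off-diagonal setting, which is exactly why the GHI argument does not extend to $(q,p)$ estimates. The $\MKN$ norm circumvents this obstruction because it is defined as a supremum over all intermediate scales $s \in [R^{-1/2+\varepsilon_0}, 1]$, and this family is stable under parabolic rescaling: a scale-$s$ wave-packet at physical scale $R$ pulls back to a scale-$(sK)$ wave-packet at physical scale $R/K^2$, with the $s^{-(n-1)/2}$ weight in the $\MKN$ definition transforming in precisely the way needed to manufacture a net gain at the critical exponent $p_0$. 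Verifying that the exponent $\alpha_0 - 1 = 2/(3n+1)$ is exactly what makes this gain outmatch the decoupling loss at $p_0$---so that the induction closes---is the technical heart of the argument.
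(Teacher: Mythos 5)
The paper's proof proceeds by a route that is fundamentally different from yours: it avoids the broad--narrow dichotomy entirely. Instead, after reducing to the endpoint $p_n=2\,\tfrac{3n+1}{3n-3}$ via interpolation with Stein's $L^2\to L^{q_n}$ bound, the induction on scales is organized around a \emph{two-ends vs.\ non-two-ends} dichotomy on the wave-packets at scale $R^{-1/2+\varepsilon_0}$. The non-two-ends case closes trivially by covering $B_R$ with $R^{1-\varepsilon^2}$-balls and applying the induction hypothesis on each. The two-ends case is the technical heart, and the key input there is not the GHI $k$-broad estimate but a \emph{two-ends bush} incidence estimate (Lemma \ref{bush} and Proposition \ref{pro1}), which produces a refinement of $X$ on which the wave-packet multiplicity is $\lesssim R^{O(\varepsilon^2)}(\#\mathbb T)^{1/2}$. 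That multiplicity bound is then fed directly into the refined decoupling inequality (Theorem \ref{refine}), and the resulting $L^{q_n}$ estimate is interpolated against an $L^2$ bound (Lemma \ref{lem:l2}) to reach $L^{p_n}$. The crucial exponent $1/2$ in the multiplicity $(\#\mathbb T)^{1/2}$ is what lets the argument load so much weight onto the $L^2$ factor.

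The obstruction you flag in your last paragraph is real, and it is precisely why your proposal does not work. The paper states explicitly that the GHI induction-through-decoupling scheme ``breaks down when seeking off-diagonal $(q,p)$ estimates, primarily because parabolic rescaling does not yield a compensating gain.'' Your claim that the $\MKN$ norm's scale-invariance ``manufactures a net gain'' is incorrect: one can check that, writing $\alpha_0=\tfrac{2(n+1)}{p_0(n-1)}$ and $c=-(n-1)+\tfrac{n+1}{p_0}$ for the Jacobian exponent from rescaling $\|\mathcal T^\lambda f_\tau\|_{L^{p_0}(B_R)}=K^c\|\widetilde{\mathcal T}g_\tau\|_{L^{p_0}(B_{R/K^2})}$, the total exponent of $K$ accumulated by the induction hypothesis is
\[
c+\tfrac{n-1}{2}(2-\alpha_0)\;=\;-(n-1)+\tfrac{n+1}{p_0}+(n-1)-\tfrac{n+1}{p_0}\;=\;0,
\]
since $\tfrac{n-1}{2}\alpha_0=\tfrac{n+1}{p_0}$. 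Thus the rescaling is exactly neutral, not a gain, and the $\MKN$ factor's scale-invariance only says that term costs nothing. Worse, when you then sum over the $\sim K^{n-1}$ caps $\tau$ after $\ell^2$-decoupling, H\"older's inequality in $\sum_\tau\|f_\tau\|_{L^2}^{2(2-\alpha_0)}$ (note $2-\alpha_0<1$) introduces an unavoidable loss of order $K^{(n-1)(\alpha_0-1)/2}=K^{(n-1)/(3n+1)}$, a fixed positive power of $K$ that the harmless $(R/K^2)^\varepsilon$ factor cannot absorb for small $\varepsilon$. So the narrow case does not close, and the proposal has a genuine gap at exactly the step you identify as the crux. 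The paper sidesteps this entirely by never rescaling into a smaller cap in the critical case; the gain comes instead from the incidence geometry of two-ends bushes.
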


\begin{proposition}[Microlocal Kakeya--Nikodym estimates for the constant-curvature case]\label{prop main2}
Suppose we are in the constant-curvature setting, so all wave-packets are essentially supported in tubular neighborhoods of straight lines.
For every $\varepsilon>0$ and $\lambda\ge1$, and for all $2\,\frac{3n+2}{3n-2}\le p\le2\,\frac{n+1}{n-1}$,
\begin{equation}\label{eq:prop2}
    \|\mathcal T^\lambda f\|_{L^p(B^n_\lambda)}\lesssim_\varepsilon\ \lambda^\varepsilon\, \|f\|_{L^2(B^{n-1}_1)}^{2-\frac{2(n+1)}{p(n-1)}}\,
    \|f\|_{\widetilde\MKN(\lambda)}^{\frac{2(n+1)}{p(n-1)}-1}.
\end{equation}
\end{proposition}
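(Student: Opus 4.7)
The plan is to prove Proposition~\ref{prop main2} by a broad--narrow induction on scales that runs in parallel to the proof of Proposition~\ref{prop main1}, with two crucial modifications: the anisotropic norm $\|\cdot\|_{\widetilde{\MKN}(R)}$ replaces $\|\cdot\|_{\MKN(R)}$, and the constant-curvature straightening diffeomorphism of \cite{gao2025curved} sends tubes to straight Euclidean tubes, so the polynomial Wolff-axiom bound furnished by Lemma~\ref{KN-L2-lem2} becomes available. I would fix $\varepsilon_0=\varepsilon^{1000}$ as in \eqref{eps_0} and prove the bound on every $B_R\subset B_\lambda$ by induction on $R$. At scale $R$, perform the wave-packet decomposition at the finest admissible scale $s=R^{-1/2+\varepsilon_0}$, writing $f=\sum_{T\in\mathbb T} f_T$, and pigeonhole so that $\|f_T\|_{L^2}$ is essentially constant over $T\in\mathbb T$ and the $T$'s contribute comparably in $L^p$.

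Next, I would run a broad--narrow decomposition on $B_R$. For a suitable integer $k$, call a point $x\in B_R$ \emph{$k$-broad} if the wave-packets through $x$ span $k$ transverse directions, and \emph{$k$-narrow} otherwise. In the broad regime, I would invoke the sharp $k$-broad estimate for straight-line operators (the Wang--Wu bound in \cite{Wang-Wu}, which builds on the Wang--Zahl resolution \cite{Wang-Zahl} of the three-dimensional Kakeya conjecture together with higher-dimensional incidence input). This yields the target $L^p$ bound in the range $p\ge 2\tfrac{3n+2}{3n-2}$ with only $\|f\|_{L^2}$ on the right-hand side; since $\|f\|_{L^2}\le \|f\|_{L^2}^{2-\frac{2(n+1)}{p(n-1)}}\|f\|_{\widetilde{\MKN}(R)}^{\frac{2(n+1)}{p(n-1)}-1}$ trivially (as $\|f\|_{L^2}\le \|f\|_{\widetilde{\MKN}(R)}$ at scale $\vec s=(1,\dots,1)$), this disposes of the broad contribution.

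In the narrow regime, the mass of $\mathcal T^\lambda f$ concentrates in the $R^{1/2+O(\varepsilon_0)}$-neighborhood of an affine $(k-1)$-plane $V$. I would restrict to the wave-packets whose straight tubes fit inside such a slab; by Lemma~\ref{lem:WSA} these are indexed by an anisotropic box at some scale $\vec s$ whose short directions are transverse to $V$. Applying refined anisotropic $\ell^p$-decoupling adapted to this slab reduces the $L^p$ norm to a sum of $L^p$ norms on balls $B_{R'}$ with $R'=R^{1-\delta}$. A linear rescaling aligned with $V$ converts each piece into an instance of $\mathcal T^{\lambda'}$ on $B_{R'}$ to which the induction hypothesis applies. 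The decoupling loss is then compensated by Lemma~\ref{KN-L2-lem2}, which converts $\widetilde{\MKN}$-control into a Wolff-axiom bound with constant $m$ for the rescaled tube family; closing the induction reduces to checking that the resulting exponent in $R$ is non-positive, which is exactly the arithmetic identity underlying the threshold $p\ge 2\tfrac{3n+2}{3n-2}$.

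The principal obstacle is to execute the induction cleanly with the anisotropic norm, since a linear rescaling inside a slab of anisotropic dimensions acts on the scales $s_i$ non-uniformly and produces a new operator at a different amplitude and scale. In the straight-line setting, however, the rescaled wave-packets are again anisotropic boxes at admissible scales, so the $\widetilde{\MKN}$ norm transforms predictably, and the Wolff-axiom input from Lemma~\ref{KN-L2-lem2} turns the worst-case narrow concentration into a quantifiable gain rather than a loss. This is the feature that fails for the isotropic $\MKN$ norm once $p<2\tfrac{3n+1}{3n-3}$ in the general case, and is precisely what enables the improved range $p\ge 2\tfrac{3n+2}{3n-2}$ here.
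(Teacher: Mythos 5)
Your proposal takes a genuinely different route from the paper, and it contains a gap that the paper's own architecture is specifically designed to avoid.

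The paper's proof of Proposition~\ref{prop main2} does \emph{not} use a broad--narrow decomposition. Instead it runs the same two-ends reduction as in the proof of Proposition~\ref{prop main1}: after pigeonholing, one splits into a ``non-two-ends'' case ($\beta\le R^{\varepsilon^4}$) and a ``two-ends'' case ($\beta\ge R^{\varepsilon^4}$). In the non-two-ends case the paper simply covers $B_R$ by $R^{1-\varepsilon^2}$-balls and applies the induction hypothesis at the smaller \emph{physical} scale, with no frequency rescaling and hence no decoupling loss; the gain comes from the fact that each tube meets $\lessapprox R^{\varepsilon^4}$ of the smaller balls. In the two-ends case the paper proves a bespoke incidence bound (Lemma~\ref{incidence-lem}) by running Wolff's hairbrush argument inside the tube family and feeding the two-ends Furstenberg estimate of Wang--Wu (Theorem~\ref{two-ends-furstenberg}) into each transverse slab, with a random sampling step to reduce to a Katz--Tao $(\delta,1)$-set. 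This gives a multiplicity bound $\mu$ depending on the Wolff-axiom error $m$, which is then inserted into the refined decoupling theorem and combined with Lemma~\ref{KN-L2-lem2} and an $L^2$ estimate by interpolation.

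The gap in your proposal is the narrow case. The paper explicitly points out in the introduction that the GHI broad--narrow scheme ``breaks down when seeking off-diagonal $(q,p)$ estimates, primarily because parabolic rescaling does not yield a compensating gain in this setting, while the decoupling theorem still incurs a critical loss.'' You assert that Lemma~\ref{KN-L2-lem2} supplies the missing compensation, but this is not justified: Lemma~\ref{KN-L2-lem2} gives a gain only when the wave-packets are densely packed into a \emph{single} anisotropic box, whereas the narrow condition (few transverse directions through each point) does not force the full tube family into a single slab at a scale where that lemma bites. Moreover, even granting that the tubes concentrate near a $(k-1)$-plane $V$, you still must carry out a parabolic rescaling per cap, and at off-diagonal exponents that rescaling does not shrink the right-hand side enough to absorb the $\ell^p$-decoupling loss; this is exactly the obstruction the paper is pointing out. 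A second, smaller concern: the ``sharp $k$-broad estimate'' you invoke from \cite{Wang-Wu} is not stated at the exponent $2\tfrac{3n+2}{3n-2}$ uniformly in $n\ge 3$; the GHI $k$-broad theorem gives that exponent only for even $n$, and the Wang--Wu improvements are dimension-specific. The paper's hairbrush-plus-Furstenberg route gives the $2\tfrac{3n+2}{3n-2}$ threshold in all dimensions directly, without appealing to any $k$-broad estimate.
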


\smallskip
It is clear that Theorem \ref{theo main2} follows directly from Proposition \ref{prop main1}, and Theorem \ref{theo main3} from Proposition \ref{prop main2}. Noting that
\[
\lambda^{\frac{n-1}{2}}\chi^\lambda f(x)=\chi_\lambda f(x/\lambda)+{\rm RapDec}(\lambda)\,\|f\|_{L^2},
\]
we conclude that Theorem \ref{theo main5} follows from Propositions \ref{prop main1} and \ref{prop main2}. Theorem \ref{theo main4} follows from Theorem \ref{theo main5} together with Lemma \ref{lem Orthogonal}, \ref{lem:WSI} and \ref{lem:WSA}. Indeed, apply Theorem \ref{theo main5} with $f=e_\lambda$ and use the reproducing property $\chi_\lambda e_\lambda=e_\lambda$ to obtain an $L^p$ bound for $e_\lambda$ in terms of the microlocal Kakeya--Nikodym norm of the input. By Lemma \ref{lem:WSI}, each microlocal piece $\mathcal Q^{\lambda,s}_{\theta,v}e_\lambda$ is essentially contained in a single tube $T^{s}_{\theta,v}$, and by Lemma \ref{lem Orthogonal} its mass is dominated by the $L^2$ mass of $e_\lambda$ on that tube. Taking the supremum over packets yields the standard comparison (see \cite{blair2015refined})
\[
\|e_\lambda\|_{\MKN(\lambda)}\lesssim\|e_\lambda\|_{\KN(\lambda)}.
\]
In the constant sectional curvature case, the same argument with Lemma \ref{lem:WSA} gives the anisotropic version. Therefore, Theorem \ref{theo main4} follows from Theorem \ref{theo main5}, and to prove all results in the paper, it suffices to establish Propositions \ref{prop main1} and \ref{prop main2}.

\medskip 

\subsection{Refined decoupling inequalities and some lemmas}

We now state the refined decoupling inequalities for our wave-packet decomposition.
The concept of decoupling inequalities was introduced by Wolff \cite{Wolff-decoupling}.
Variable-coefficient analogues of the Bourgain--Demeter decoupling theorem \cite{bourgain2015proof} were first obtained by Beltran, Hickman, and Sogge \cite{beltran2020variable} for the cone. Refined (multiplicity-sensitive) versions were proved in the translation-invariant setting for the paraboloid in \cite{guth2020falconer} and independently observed by Du and Zhang, and were extended to variable coefficients in \cite{iosevich2022microlocal}, which cover the class of $n\times n$ positive-definite (symmetric) Carleson--Sj\"olin phases; see also the appendix of \cite{gan2025local}. The translation-invariant statement implies its variable-coefficient counterpart by the standard induction-on-scales/parabolic-rescaling scheme: decoupling constants are multiplicative, so one propagates gains from small to large scales, while at small scales the variable-coefficient operators are well approximated by the translation-invariant model.

\begin{theorem}[Refined decoupling {\cite{guth2020falconer,iosevich2022microlocal}}]\label{refine}
Let $p=2\,\frac{n+1}{n-1}$, $1\le R\le\lambda$, and $s=R^{-1/2+\varepsilon_0}$. Let $\mathbb T$ be a set of wave-packets at scale $(R,s)$, and suppose
\[
f=\sum_{T\in\mathbb T} f_{T}
\quad\text{with}\quad
\|\mathcal T^\lambda f_{T}\|_{L^p(w_{B_R})}\ \text{comparable for all } T\in\mathbb T.
\]
Let $X\subset B_R$ be a union of $R^{1/2}$-balls such that each $R^{1/2}$-ball $Q\subset X$ meets at most $M$ tubes from $\mathbb T$. Then 
\[
\|\mathcal T^\lambda f\|_{L^p(X)}^p
\lesssim_\e R^\e M^{\frac{2}{n-1}}
\sum_{T\in \mathbb{T}}\|\mathcal T^\lambda f_{T}\|_{L^p(w_{B_R})}^p.
\]
Here $w_{B_R}$ is a nonnegative weight that is $\ge 1$ on $B_R$ and decays rapidly outside $B_R$.
\end{theorem}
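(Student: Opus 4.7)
The plan is to derive the refined decoupling inequality by combining the variable-coefficient $\ell^2$-decoupling theorem of Beltran--Hickman--Sogge (and its extension in \cite{iosevich2022microlocal}) with the wave-packet structure at scale $(R,s)$, localizing to $R^{1/2}$-balls. The two essential ingredients are (i) the standard $\ell^2$-decoupling inequality at the critical exponent $p=2\tfrac{n+1}{n-1}$, and (ii) the spatial disjointness of wave-packets sharing a fixed frequency cap on a ball of radius much smaller than their cross-section $sR=R^{1/2+\varepsilon_0}$.

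First, I would cover $X$ by essentially disjoint $R^{1/2}$-balls $Q$ and apply the $\ell^2$-decoupling theorem on each $Q$ using the cap decomposition at scale $s$ (refining if necessary to $R^{-1/2}$-caps, at the cost of an $R^{O(\varepsilon_0)}$ factor absorbed into $R^\varepsilon$):
\[
\|\mathcal T^\lambda f\|_{L^p(Q)}^p
\lesssim_\varepsilon R^\varepsilon
\Bigl(\sum_{\theta}\|\mathcal T^\lambda f_\theta\|_{L^p(w_Q)}^2\Bigr)^{p/2}.
\]
Since $Q$ has radius $R^{1/2}$ while the wave-packets with fixed $\theta$ and distinct spatial translates $v$ live in parallel tubes with cross-section $sR\gg R^{1/2}$ and are separated at that same scale, for each cap $\theta$ at most one translate $v=v(Q,\theta)$ contributes nontrivially to $\mathcal T^\lambda f_\theta$ on $Q$, modulo the rapidly decaying tails supplied by Lemma \ref{wpt-spt-lem}. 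Consequently
\[
\sum_\theta \|\mathcal T^\lambda f_\theta\|_{L^p(w_Q)}^2
\ \approx\ \sum_{T\in \mathbb T_Q}\|\mathcal T^\lambda f_T\|_{L^p(w_Q)}^2,
\qquad \mathbb T_Q:=\{T\in\mathbb T:\ T\cap Q\neq\emptyset\},
\]
and by hypothesis $\#\mathbb T_Q\le M$.

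Next, I would apply Hölder's inequality in the $T$-sum to exchange $\ell^2$ for $\ell^p$ at the cost of the multiplicity:
\[
\Bigl(\sum_{T\in\mathbb T_Q}\|\mathcal T^\lambda f_T\|_{L^p(w_Q)}^2\Bigr)^{p/2}
\le M^{\frac{p}{2}-1}\sum_{T\in\mathbb T_Q}\|\mathcal T^\lambda f_T\|_{L^p(w_Q)}^p,
\]
where the exponent $\tfrac{p}{2}-1=\tfrac{2}{n-1}$ is exactly the one appearing in the statement, so the choice $p=2\tfrac{n+1}{n-1}$ is algebraically forced. Summing over $Q\subset X$ and swapping the order of summation gives the bound
\[
\|\mathcal T^\lambda f\|_{L^p(X)}^p
\lesssim_\varepsilon R^\varepsilon M^{\frac{2}{n-1}}
\sum_T \sum_{Q\subset X,\ T\cap Q\ne\emptyset}\|\mathcal T^\lambda f_T\|_{L^p(w_Q)}^p.
\]
Finally, the inner sum in $Q$ is bounded by $\|\mathcal T^\lambda f_T\|_{L^p(w_{B_R})}^p$ since the Schwartz weights $w_Q$ for $R^{1/2}$-balls $Q\subset B_R$ have bounded overlap and are dominated by $w_{B_R}$. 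This yields the desired inequality.

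The main technical obstacle is the spatial-separation step: justifying that on a single $R^{1/2}$-ball $Q$ the sum over $v$ for fixed $\theta$ is essentially a single term. For the translation-invariant extension operator this is immediate from orthogonality at scale $R^{1/2}$, but for variable-coefficient phases the tubes $T^{R,s}_{\theta,v}$ are curved and one must argue via the wave-packet support lemma together with the observation that two distinct $v$'s produce tubes whose physical-space separation at every height is at least $sR\gg R^{1/2}$, so they cannot both meet $Q$ (again up to rapidly decaying tails). A secondary subtlety is that standard $\ell^2$-decoupling is stated at $R^{-1/2}$-scale while our wave packets live at the coarser scale $s=R^{-1/2+\varepsilon_0}$; the $R^{O(\varepsilon_0)}$ loss from refining is harmless thanks to the $R^\varepsilon$ slack in the conclusion. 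The ``comparable'' hypothesis on the $\|\mathcal T^\lambda f_T\|_{L^p(w_{B_R})}$ is what lets us perform the Hölder step uniformly in $Q$, and it is cost-free since any set of wave-packets can be dyadically pigeonholed into $O(\log R)$ such classes.
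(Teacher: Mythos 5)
The key step of your argument---applying $\ell^2$-decoupling on a ball $Q$ of radius $R^{1/2}$ to decouple into caps of scale $s\approx R^{-1/2}$---does not follow from the Bourgain--Demeter or Beltran--Hickman--Sogge theorems you cite, and this is a genuine gap, not a cosmetic one. Sharp $\ell^2$-decoupling on a spatial ball of radius $\rho$ produces caps of scale $\rho^{-1/2}$; for $\rho=R^{1/2}$ this is $R^{-1/4}$, two scales coarser than the $R^{-1/2}$-caps you need. Forcing a decomposition into $R^{-1/2}$-caps on a $R^{1/2}$-ball is a \emph{flat} decoupling, which at the critical exponent $p=2\tfrac{n+1}{n-1}$ costs a multiplicative factor $N^{1/2-1/p}$ in the $\ell^2$-norm, where $N\le M$ is the number of contributing caps. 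Combined with the H\"older step from $\ell^2$ to $\ell^p$ (which costs another $M^{1/2-1/p}$), this yields $M^{2(1/2-1/p)p}=M^{p-2}$ rather than the claimed $M^{p/2-1}=M^{2/(n-1)}$---a square worse. In other words, your argument proves refined decoupling only with $M^{p-2}$ in place of $M^{p/2-1}$, which is the trivial bound and carries no content. The locally-constant observation (that only one $v$ per $\theta$ matters on a single $Q$) is correct but does not compensate: it only controls which terms appear, not the decoupling constant itself.

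The actual proof in the cited sources (the appendix of Guth--Iosevich--Ou--Wang, Iosevich--Liu, and the appendix of Gan's local smoothing paper for the variable-coefficient case) is genuinely multiscale and inductive. One decouples $B_R$ into intermediate $K^{-1}$-caps with $K=R^\delta$, parabolically rescales each cap to pass from scale $R$ to scale $R/K^2$, applies the inductive hypothesis there, and carefully tracks how the multiplicity parameter $M$ and the set $X$ transform under rescaling. The gain over the naive $M^{p-2}$ comes from iterating this rescaling over $\sim 1/\delta$ scales, not from a single decoupling step on $R^{1/2}$-balls. Note also that the paper itself states this theorem as a black box with citations rather than proving it, so the relevant comparison is against those references, and your proposal does not reproduce their mechanism.
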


\smallskip 

We collect several results for later use.
The first is a direct consequence of the classical $L^2\to L^2$ boundedness for oscillatory integral operators due to H\"ormander \cite{hormander1973oscillatory}. It can also be proved by hand via the wave-packet decomposition, at the expense of a harmless $R^{O(\e_0)}$ loss.

\begin{lemma}\label{hormander}
Let $1\leq R\leq \lambda$. Then
\begin{equation*}
  \|\mathcal{T}^\lambda f\|_{L^2(B_R)}^2 \lesssim R \|f\|_{L^2}^2.
\end{equation*}
\end{lemma}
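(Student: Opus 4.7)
The plan is to establish the bound by a $TT^*$ argument, reducing it to a Schur estimate for a kernel on frequency space. Smoothing the indicator $\mathbf{1}_{B_R}$ to a nonnegative weight $w_{B_R}$ adapted to $B_R$ with derivatives of size $\lesssim R^{-1}$, one has
\[
\|\mathcal{T}^\lambda f\|_{L^2(B_R)}^2 \le \int\!\!\int K(\xi,\eta)\, f(\xi)\, \overline{f(\eta)}\, d\xi\, d\eta,
\]
where
\[
K(\xi,\eta) := \int_{\mathbb R^n} w_{B_R}(x)\, e^{2\pi i(\phi^\lambda(x,\xi) - \phi^\lambda(x,\eta))}\, a^\lambda(x,\xi)\, \overline{a^\lambda(x,\eta)}\, dx.
\]
By Schur's test, it suffices to show $\int |K(\xi,\eta)|\, d\eta \lesssim R$ uniformly in $\xi$, and symmetrically in $\eta$.

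The key input is hypothesis \H: the uniform lower bound on the smallest nonzero singular value of $\partial^2_{\xi x}\phi$ on $\supp a$ gives, by the mean value theorem,
\[
|\nabla_x(\phi^\lambda(x,\xi) - \phi^\lambda(x,\eta))| \gtrsim |\xi - \eta|.
\]
Setting $\Psi(x) := \phi^\lambda(x,\xi) - \phi^\lambda(x,\eta)$ and integrating by parts $N$ times against $e^{2\pi i\Psi}$ via the standard non-stationary-phase operator $L = (2\pi i)^{-1}|\nabla_x\Psi|^{-2}\,\nabla_x\Psi\cdot\nabla_x$, each step contributes $|\xi-\eta|^{-1}$ times derivatives of the symbol $w_{B_R}(x)\,a^\lambda(x,\xi)\,\overline{a^\lambda(x,\eta)}$. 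The dominant contribution per step is $\lesssim R^{-1}$, coming from $w_{B_R}$; derivatives of $a^\lambda$ contribute only $\lambda^{-1}\le R^{-1}$, while higher derivatives of $\Psi$ are $O(1)$. Combined with the volume bound $|\supp w_{B_R}|\lesssim R^n$, this yields
\[
|K(\xi,\eta)| \lesssim_N R^n (1 + R|\xi-\eta|)^{-N}.
\]

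Integrating in $\eta\in\mathbb R^{n-1}$ produces $\int |K(\xi,\eta)|\,d\eta \lesssim R^n\cdot R^{-(n-1)} = R$, and symmetrically in $\xi$, so Schur's test concludes the argument. The same strategy handles the spectral projector: although the phase $d_g$ is $n\times n$, the microlocal cutoff $\alpha(|\eta|/\lambda)$ in the wave-packet construction of Section~\ref{subsec:spec-proj} restricts $f$ to a spherical frequency shell, and the rank-$(n-1)$ non-degeneracy of $\partial^2_{xy}d_g$ transverse to the radial direction yields the analogous lower bound on $|\nabla_x(d_g(x,y) - d_g(x,z))|$ for $y,z$ in that shell. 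I expect no genuine obstacle; the delicate point is merely the bookkeeping that each integration by parts costs at most $R^{-1}$ rather than $\lambda^{-1}$, valid because the amplitude $a^\lambda$ varies at the larger scale $\lambda\ge R$ while $w_{B_R}$ sets the effective sharp scale.
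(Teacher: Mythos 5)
Your $TT^*$ + Schur argument is a standard, correct route to Hörmander's local $L^2$ estimate in the $n\times(n-1)$ Carleson--Sjölin setting, and for the H\"ormander operator $\mathcal H^\lambda$ it is essentially a self-contained proof of the classical bound that the paper merely cites. There, hypothesis \H gives $|\nabla_x(\phi^\lambda(x,\xi)-\phi^\lambda(x,\eta))|\gtrsim|\xi-\eta|$ uniformly, and the phase's $x$-Hessian is $O(\lambda^{-1}|\xi-\eta|)$, so each integration by parts indeed costs $(R|\xi-\eta|)^{-1}$ and the Schur integral $\int|K|\,d\eta\lesssim R^n\cdot R^{-(n-1)}=R$ goes through.

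For the spectral-projector ($n\times n$) case, however, the last paragraph has a genuine gap. The cutoff $\alpha(|\eta|/\lambda)$ is a Fourier-side restriction on $f$ and does \emph{not} localize the physical variable $y$, so the claim that the phase yields ``the analogous lower bound on $|\nabla_x(d_g(x,y)-d_g(x,z))|$ for $y,z$ in that shell'' is not correct: $\partial^2_{xy}d_g$ is degenerate exactly along the radial geodesic direction in $y$, and for $y,z$ radially aligned the gradient of the phase difference vanishes while $|y-z|$ is order one. Hence the kernel does not decay in $|y-z|$ but only in the transverse separation $\Delta=|\nabla_z d_g(z,y)-\nabla_z d_g(z,z')|$, and moreover the size of $\nabla_x^2\Psi$ is $O(\lambda^{-1}|y-z|)$ rather than $O(\lambda^{-1}\Delta)$, so the divergence term in $L^*$ contributes $\lambda^{-1}|y-z|\Delta^{-2}$ per integration by parts, which dominates $R^{-1}\Delta^{-1}$ once $\Delta\lesssim R/\lambda$. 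Tracking the contribution of the region $\{\Delta\lesssim R/\lambda\}$ (a tube of radius $R/\lambda$ and length $O(1)$ in $z$-space) gives $\int|K|\,dz\gtrsim R^n(R/\lambda)^{n-1}$, which exceeds the required bound $R$ as soon as $R>\lambda^{1/2}$, so the Schur test fails in part of the stated range $1\le R\le\lambda$.

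The clean fix, and the meaning of the paper's remark that the lemma is a direct consequence of Hörmander's $L^2$ bound, is the Carleson--Sjölin reduction: freeze a suitable coordinate $y_n$ so that $y'\mapsto d_g(x,(y',y_n))$ is a genuine $n\times(n-1)$ Carleson--Sjölin phase, apply your $n\times(n-1)$ argument to each slice to get $\|T^{(y_n)}g\|_{L^2(B_R)}\lesssim R^{1/2}\|g\|_{L^2}$, and then conclude by Minkowski's inequality in $y_n$ followed by Cauchy--Schwarz on the bounded $y_n$-range. This recovers $\|\chi^\lambda f\|_{L^2(B_R)}\lesssim R^{1/2}\|f\|_{L^2}$ for all $1\le R\le\lambda$ without any kernel estimate in the degenerate direction. (The paper offers no written proof at all; it cites Hörmander's classical estimate and notes, as a second option, that the wave-packet decomposition gives the bound with an $R^{O(\varepsilon_0)}$ loss.)
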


The second result is proved by Stein in  \cite{stein1986oscillatory}.
\begin{lemma}
\label{Stein}
We have
\begin{equation}
\label{Stein-esti}
    \|\mathcal{T}^\lambda f\|_{L^{2\,\frac{n+1}{n-1}}}\lesssim \|f\|_{L^2}.
\end{equation}
\end{lemma}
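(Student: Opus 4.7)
The statement is Stein's 1986 endpoint oscillatory integral estimate for Carleson--Sj\"olin phases, which I would prove by the classical three-step argument: a $TT^\ast$ reduction, a dispersive kernel estimate via stationary phase, and Stein's complex interpolation. By duality and the $TT^\ast$ identity, it suffices to establish the bound $\mathcal T^\lambda (\mathcal T^\lambda)^\ast \colon L^{p'} \to L^p$ with $p = 2\,\frac{n+1}{n-1}$ and $p' = 2\,\frac{n+1}{n+3}$. The kernel of $\mathcal T^\lambda (\mathcal T^\lambda)^\ast$ is
\[
K(x,y) = \int_{B_1^{n-1}} e^{2\pi i (\phi^\lambda(x,\xi) - \phi^\lambda(y,\xi))} a^\lambda(x,\xi)\,\overline{a^\lambda(y,\xi)} \, d\xi,
\]
and the first main step is to prove the pointwise kernel bound $|K(x,y)| \lesssim (1+|x-y|)^{-(n-1)/2}$ for $x,y \in B_\lambda^n$, with rapid decay outside this region. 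Setting $u = x/\lambda$ and $v = y/\lambda$, the $\xi$-phase equals $\lambda[\phi(u,\xi) - \phi(v,\xi)]$, and the critical equation $\partial_\xi \phi(u,\xi) = \partial_\xi \phi(v,\xi)$, combined with the rank condition \H, forces $u-v$ to lie essentially along the distinguished direction $G(u,\xi)$. Writing
\[
\partial_\xi \phi(u,\xi) - \partial_\xi\phi(v,\xi) = \Bigl(\int_0^1 \partial_\xi\partial_x\phi(v+t(u-v),\xi)\,dt\Bigr)\cdot(u-v),
\]
the curvature assumption \HH then implies that the $(n-1)\times(n-1)$ Hessian of the $\xi$-phase at a critical point is nondegenerate of determinant comparable to $(\lambda|u-v|)^{n-1}$. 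Stationary phase in $\xi$ therefore yields $\lesssim (\lambda|u-v|)^{-(n-1)/2} = |x-y|^{-(n-1)/2}$; off the critical-point set, integration by parts in $\xi$ produces rapid decay.

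Next I would run Stein's complex interpolation. Embed $\mathcal T^\lambda$ into an analytic family $\mathcal T^\lambda_z$ by inserting a complex power of a suitable smooth factor into the amplitude (for instance via the distributional family $\chi_z(t) = t_+^z/\Gamma(z+1)$ applied to a boundary-distance function), arranged so that $\mathcal T^\lambda_0 = \mathcal T^\lambda$. On the line $\mathrm{Re}(z) = \sigma_1 < 0$ one establishes an $L^2 \to L^2$ bound for $\mathcal T^\lambda_z$ by an $L^2$ orthogonality / Plancherel argument using that the modified phase remains of Carleson--Sj\"olin type (invoking H\"ormander's $L^2 \to L^2$ theory, in the spirit of Lemma \ref{hormander}). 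On $\mathrm{Re}(z) = \sigma_2 > 0$ one obtains an $L^1 \to L^\infty$ bound for $\mathcal T^\lambda_z(\mathcal T^\lambda_z)^\ast$ from a pointwise kernel estimate, where the complex power contributes the extra smoothness needed. Choosing $\sigma_1, \sigma_2$ so that $z = 0$ sits on the correct interpolation line, Stein's interpolation theorem produces $\|\mathcal T^\lambda\|_{L^2 \to L^p} \lesssim 1$ at the Stein--Tomas exponent $p = 2\,\frac{n+1}{n-1}$.

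The main obstacle is the dispersive kernel bound in the first step: one must show that along the codimension-one set in $(u,v)$ space where a stationary point exists, the full $\xi$-Hessian of the phase difference is nondegenerate and of size $\lambda|u-v|$. The Carleson--Sj\"olin condition \HH is tailored precisely for this purpose, as the leading contribution to the Hessian is obtained by contracting the tensor $\partial^2_{\xi\xi}\partial_x\phi$ with the cofactor direction $G$, whose nondegeneracy is exactly what \HH asserts. Without \HH the argument loses exactly the $(n-1)/2$ decay needed to reach the endpoint; the stronger hypothesis \HHH is not required for this $L^2 \to L^p$ bound, as it would only render the Hessian sign-definite, simplifying but not altering the stationary phase analysis.
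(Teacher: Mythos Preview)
The paper does not prove this lemma at all: it simply records it as a known result, writing ``The second result is proved by Stein in \cite{stein1986oscillatory}.'' Your proposal, by contrast, sketches Stein's original $TT^\ast$ plus analytic interpolation argument, which is indeed the standard route to \eqref{Stein-esti} and is broadly correct.

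A couple of small remarks on your sketch. First, the claimed isotropic bound $|K(x,y)|\lesssim(1+|x-y|)^{-(n-1)/2}$ is too crude as stated: the $TT^\ast$ kernel has that decay only along the distinguished direction $G$ and is rapidly decreasing transversally, so the geometry of the critical set matters and a bare Schur/Hardy--Littlewood--Sobolev argument from that bound alone would not reach the endpoint $p=2\,\tfrac{n+1}{n-1}$. This is precisely why the analytic family and complex interpolation are needed, as you note. Second, in the unified notation $\mathcal T^\lambda$ also covers the spectral projector $\chi^\lambda$; there the corresponding endpoint estimate is Sogge's bound \cite{sogge1988concerning}, proved by the same Carleson--Sj\"olin mechanism after the reduction to the distance phase (or, equivalently, by freezing a variable to reduce to the $n\times(n-1)$ case). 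Your sketch addresses the H\"ormander operator case and transfers to the projector via these standard reductions.
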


\begin{lemma}\label{lem:l2}
Let $X = \bigcup Q$ be a union of $R^{1/2}$-balls, and let $f = \sum_{T \in \mathbb{T}} f_{T}$ be a sum of wave-packets at scale $R^{-1/2+\e_0}$. Suppose that for each $T \in \mathbb{T}$ there exists a shading $Y(T) \subset T$, consisting of $R^{1/2}$-balls in $X$, such that the number of $R^{1/2}$-balls contained in $Y(T)$ satisfies 
\[
\#\{ Q \subset Y(T) \} \lesssim \rho R^{1/2}.
\]
Then 
\begin{equation}\label{eq:l2}
    \int_X \Big| \sum_{T \in \mathbb{T}} \mathcal{T}^\lambda f_T \, \mathbf{1}_{Y(T)} \Big|^2 
    \lesssim R^{\e_0}\rho R \, \|f\|_{L^2}^2.
\end{equation}
\end{lemma}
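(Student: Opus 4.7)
The approach is a standard local-to-global $L^2$ argument in three moves: perform local $L^2$-orthogonality on each $R^{1/2}$-ball $Q\subset X$; use a pointwise $L^\infty$ bound on wave-packets together with the shading volume $|Y(T)|\lesssim\rho R^{(n+1)/2}$ to extract the factor $\rho$ tube-by-tube; and close with the global orthogonality of Lemma~\ref{lem Orthogonal} at scale $s=R^{-1/2+\e_0}$.

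For the first move, set $\mathbb T(Q):=\{T\in\mathbb T:Q\subset Y(T)\}$ and note that on $Q$ one has $\sum_T\mathcal T^\lambda f_T\mathbf 1_{Y(T)}=\sum_{T\in\mathbb T(Q)}\mathcal T^\lambda f_T$. Two complementary orthogonality facts combine: wave-packets associated with distinct frequency caps $\theta$ are essentially $L^2$-orthogonal on $Q$ because, after linearizing the phase, their restricted frequency supports lie in $R^{-1/2}$-blurs of $s$-caps centered at $\nabla_x\phi^\lambda(x_Q,\xi_\theta)$, and by \H these centers are separated by $\gtrsim s\gg R^{-1/2}$; meanwhile, for each fixed $\theta$ only $O(1)$ shifts $v$ yield tubes meeting $Q$, since the parallel translates $T_{\theta,v}$ have cross-section $sR$ and transverse lattice spacing also $sR$. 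Combining these with $R^{O(\e_0)}$ losses and swapping the sum over $Q\subset X$ with that over $T$,
\[
    \int_X\Bigl|\sum_T\mathcal T^\lambda f_T\mathbf 1_{Y(T)}\Bigr|^2\lesssim R^{O(\e_0)}\sum_T\int_{Y(T)}|\mathcal T^\lambda f_T|^2+{\rm RapDec}(R)\|f\|_{L^2}^2.
\]

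For the second move, since $f_T$ has (microlocal) frequency support in a cube of side $s$, Cauchy--Schwarz inside the defining oscillatory integral yields the uniform pointwise bound
\[
    |\mathcal T^\lambda f_T(x)|^2 \lesssim \|f_T\|_{L^1}^2 \lesssim s^{n-1}\|f_T\|_{L^2}^2 = R^{-(n-1)/2+O(\e_0)}\|f_T\|_{L^2}^2.
\]
The shading hypothesis $\#\{Q\subset Y(T)\}\lesssim\rho R^{1/2}$ gives $|Y(T)|\lesssim\rho R^{1/2}\cdot R^{n/2}=\rho R^{(n+1)/2}$, hence $\int_{Y(T)}|\mathcal T^\lambda f_T|^2\lesssim\rho R^{1+O(\e_0)}\|f_T\|_{L^2}^2$. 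Summing over $T$ and invoking Lemma~\ref{lem Orthogonal} in the form $\sum_T\|f_T\|_{L^2}^2\lesssim\|f\|_{L^2}^2$ produces the bound $R^{O(\e_0)}\rho R\|f\|_{L^2}^2$, which is \eqref{eq:l2} after absorbing the universal exponent in $O(\e_0)$ by slightly shrinking $\e_0$.

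The delicate step is the local $L^2$-orthogonality: one must carefully track Schwartz tails from the wave-packet bump functions, verify the FIO frequency-momentum correspondence after phase linearization on $Q$, and exploit the margin $s\gg R^{-1/2}$ so that the $s$-separated caps remain essentially disjoint when restricted to $Q$. The pointwise bound, by contrast, is essentially tight and robust: under a heuristic uniform distribution of $|\mathcal T^\lambda f_T|^2$ over $T$, it matches the $L^2$ tube bound $\int_T|\mathcal T^\lambda f_T|^2\sim R\|f_T\|_{L^2}^2$ of Lemma~\ref{hormander}, and the ratio $|Y(T)|/|T|\sim\rho R^{-(n-1)\e_0}$ correctly identifies the shaded fraction of the tube.
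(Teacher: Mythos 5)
Your proof is correct, and the final bound matches the paper's up to the harmless discrepancy between $R^{\e_0}$ and $R^{O(\e_0)}$, which is what the paper actually uses when invoking this lemma in~\eqref{l2} and~\eqref{l2-2}. However, you reach it by a different route than the paper. The paper's proof is shorter and more modular: it splits $\int_X = \sum_{Q\subset X}\int_Q$, notes that on each $R^{1/2}$-ball $Q$ the shaded sum reduces to $\mathcal T^\lambda g$ with $g=\sum_{T:\,Q\subset Y(T)}f_T$, and applies H\"ormander's $L^2\to L^2$ bound (Lemma~\ref{hormander}) at scale $R^{1/2}$ as a black box to get $\int_Q|\mathcal T^\lambda g|^2\lesssim R^{1/2}\|g\|_2^2$; it then applies the Plancherel almost-orthogonality of Lemma~\ref{lem Orthogonal} to $\|g\|_2^2$ and swaps $\sum_Q\sum_{T:Q\subset Y(T)}$ to $\sum_T\sum_{Q\subset Y(T)}$, using the counting hypothesis to finish. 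You instead re-derive, directly from the phase structure, that the wave-packets $\mathcal T^\lambda f_T$ are almost $L^2(Q)$-orthogonal, and then close with the $L^\infty$ bound $|\mathcal T^\lambda f_T|\lesssim\|f_T\|_{L^1}\lesssim s^{(n-1)/2}\|f_T\|_2$ integrated over $|Y(T)|\lesssim\rho R^{(n+1)/2}$. Your two-step combination (local orthogonality plus pointwise bound) is in effect a re-derivation of the local $L^2$ estimate at scale $R^{1/2}$, so the paper's route is cleaner and avoids the frequency-separation argument you yourself flag as delicate; note also that this orthogonality step would require a parallel treatment in the $n\times n$ spectral-projector setting, whereas Lemma~\ref{hormander} and Lemma~\ref{lem Orthogonal} already cover both settings in unified notation. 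Finally, the remark about ``slightly shrinking $\e_0$'' is not quite right since $\e_0$ is fixed; the honest statement is that you obtain $R^{O(\e_0)}$, which suffices where the lemma is applied.
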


\begin{proof}
Since $X = \bigcup Q$, we have
\[
\int_X \Big| \sum_{T \in \mathbb{T}} \mathcal{T}^\lambda f_T \, \mathbf{1}_{Y(T)} \Big|^2
\lesssim \sum_{Q \subset X} \Big| \sum_{T \in \mathbb{T}} \int_Q \mathcal{T}^\lambda f_T \, \mathbf{1}_{Y(T)} \Big|^2.
\]
Applying Lemma \ref{hormander} to each $Q$ at scale $R^{1/2}$, it follows that
\begin{equation}
\nonumber
\sum_{Q \subset X} \Big| \sum_{T \in \mathbb{T}} \int_Q \mathcal{T}^\lambda f_T \, \mathbf{1}_{Y(T)} \Big|^2
\lesssim R^{\e_0}R^{1/2} \sum_{Q \subset X} \Big\| \sum_{\substack{T \in \mathbb{T} \\ Y(T) \cap Q \neq \varnothing}} f_T \Big\|_{L^2}^2.
\end{equation}
Using orthogonality \eqref{eq:ortho1}, we further obtain
\begin{equation}
\nonumber
R^{1/2} \sum_{Q \subset X} \Big\| \sum_{\substack{T \in \mathbb{T} \\ Y(T) \cap Q \neq \varnothing}} f_T \Big\|_{L^2}^2
\lesssim R^{1/2} \sum_{T \in \mathbb{T}} \sum_{Q \subset Y(T)} \|f_T\|_{L^2}^2.
\end{equation}
Since the number of $R^{1/2}$-balls in $Y(T)$ is $\lesssim \rho R^{1/2}$, we conclude that
\[
R^{1/2} \sum_{T \in \mathbb{T}} \sum_{Q \subset Y(T)} \|f_T\|_{L^2}^2
\lesssim \rho R \sum_{T \in \mathbb{T}} \|f_T\|_{L^2}^2,
\]
which establishes \eqref{eq:l2}.
\end{proof}

\bigskip

\section{Proof of Proposition \ref{prop main1} }\label{sec general}
First, we fix notation for the remainder of the section. Because the incidence bounds are proved first, we work at unit scale: we consider the unit-length curves $\gamma_{\theta,v}\subset B_1$ rather than their rescalings to $B_\lambda$.

\begin{definition}[Admissible curves]
In the unified notation, for each choice of $\xi_\theta\in B^{n-1}_1(0)$ and $v\in B^{n-1}_1(0)$, there is an associated unit-length curve $\gamma_{\theta,v}\subset B^n_1(0)$.  
By ranging over all such pairs, we obtain the collection of \textbf{admissible curves}
\[
\Gamma:=\{\gamma_{\theta,v}:\ \xi_\theta,v\in B^{n-1}_1(0)\}.
\]
Given $0<\delta\ll1$, two curves $\gamma_{\theta_1,v_1},\gamma_{\theta_2,v_2}\in\Gamma$ are said to be \emph{$\delta$-separated} if
\[
|\xi_{\theta_1}-\xi_{\theta_2}|\ge C\delta,
\]
for a fixed absolute constant $C>0$.
\end{definition}

\begin{definition}[Refinement]
Let $E, F \subset \mathbb{R}^n$ be finite sets and let $c>0$.  
We say that $E$ is a \emph{$\gtrsim c$-refinement} of $F$ if 
$E \subset F$ and $\# E \gtrsim c \, \# F$.  
We say that $E$ is a \emph{$\gtrapprox c$-refinement} of $F$, or simply a \emph{refinement} of $F$, if $E \subset F$ and $\# E \gtrapprox c \, \# F$.
\end{definition}

\begin{definition}[Shading]\label{shading}
Let $\Gamma$ be a set of admissible curves in $\mathbb{R}^n$ and let $\delta \in (0,1)$.  
A \emph{shading} is a map $Y: \Gamma \to B_1(0)$ assigning to each curve $\gamma \in \Gamma$ a set 
\[
Y(\gamma) \subset N_\delta(\gamma) \cap B_1(0),
\]
where $Y(\gamma)$ is a union of $\delta$-balls in $\mathbb{R}^n$.  
We denote such a pair by $(\Gamma, Y)_\delta$ to emphasize the dependence on the scale parameter $\delta$.

Similarly, given a family of $\delta$-tubes
\[
\Gamma_\delta := \left\{ N_\delta(\gamma) : \gamma \in \Gamma \right\},
\]
a \emph{shading} is a map $Y: \Gamma_\delta  \to \mathbb{R}^n$ such that for each tube $T \in \Gamma_\delta$, the set $Y(T) \subset 2T$ is a union of $\delta$-balls in $\mathbb{R}^n$.
\end{definition}

\begin{definition}[Katz--Tao $(\delta,s,C)$-set]
Let $\delta \in (0,1)$ be small, and let $s \in (0,n]$.  
A finite set $E \subset \mathbb{R}^n$ is called a \emph{Katz--Tao $(\delta,s,C)$-set} (or simply a \emph{Katz--Tao $(\delta,s)$-set}) if
\[
\#\big(E \cap B_r(x)\big) \le C\left( \frac{r}{\delta} \right)^s,
\quad \forall\, x \in \mathbb{R}^n, \ \ r \in [\delta,1].
\]
\end{definition}

\begin{definition}[$\rho$-dense]
Let $\delta \in (0,1)$ and let $(\Gamma, Y)_\delta$ be a set of admissible curves with shading.  
We say that $Y$ is \emph{$\rho$-dense} if
\[
|Y(\gamma)| \geq \rho \, |N_\delta(\gamma)| \quad \text{for all } \gamma \in \Gamma.
\]
\end{definition}
\begin{definition}
\label{two-ends-def}
Let $\de\in(0,1)$ and let $(\Gamma,Y)_\delta$ be a set of admissible curves and shading.
Let $0<\e_2<\e_1<1$.
We say $Y$ is {\bf $(\e_1 ,\e_2, C)$-two-ends} if for all $\gamma \in \Gamma$ and all $\de\times\de^{\e_1}$-tubes $J\subset N_\de(\gamma)$, 
\begin{equation}
\nonumber
    |Y(\gamma)\cap J|\le C\de^{\e_2} |Y(\gamma)|.
\end{equation}
When the constant $C$ is not important in the context, we say $Y$ is {\bf $(\e_1 ,\e_2)$-two-ends}, or simply {\bf two-ends}. 
A similar definition applies to a single shading $Y(\gamma)$.
\end{definition}

\smallskip

To make use of Theorem \ref{refine}, we need the following incidence estimate to control the multiplicity factor $M$.
We remark that the exponent $1/2$ in the expression $(\delta^{n-1}\#\Gamma)^{1/2}$ below is crucial---it is the main reason we are able to put a considerable weight on the $L^2$ part of \eqref{eq:prop1}.

\begin{lemma}[Two-ends bush]\label{bush}
   Let $\delta \in (0,1)$. Let $(\Gamma, Y)_\delta$ be a set of $\delta$-separated curves  in $\mathbb{R}^n$  with an $(\varepsilon_1,\varepsilon_2)$-two-ends, $\rho$-dense shading. Then $|E_\Gamma|\gtrapprox \delta^{\varepsilon_1/2}\rho \delta^{\frac{n-1}{2}}(\delta^{n-1}\#\Gamma)^{1/2}$.
\end{lemma}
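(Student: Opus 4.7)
The plan is to run a Wolff-style two-ends bush argument, balancing a popularity lower bound against a direction-separation bound. First I set $\mu(x) := \sum_{\gamma \in \Gamma} \mathbf{1}_{Y(\gamma)}(x)$; by the $\rho$-dense hypothesis,
\[
\int \mu\, dx = \sum_{\gamma \in \Gamma} |Y(\gamma)| \gtrsim \rho\,\delta^{n-1}\,\#\Gamma.
\]
By dyadic pigeonholing (with the logarithmic loss absorbed in $\gtrapprox$), I locate a level set $\{\mu \sim M\}$ of comparable total mass. If $M$ is very small then the trivial popularity bound $|E_\Gamma| \gtrsim \rho \delta^{n-1} \#\Gamma / M$ already dominates; otherwise I pick $x_0$ with $\mu(x_0) \sim M$ and form the bush $\mathcal{B} := \{\gamma \in \Gamma : x_0 \in Y(\gamma)\}$ with $\#\mathcal{B} \sim M$.

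Second, I use the two-ends hypothesis to evacuate a neighborhood of $x_0$: for each $\gamma \in \mathcal{B}$, the intersection $N_\delta(\gamma) \cap B(x_0, \delta^{\varepsilon_1})$ is contained in a $\delta \times \delta^{\varepsilon_1}$ subtube of $N_\delta(\gamma)$, so $|Y(\gamma) \cap B(x_0, \delta^{\varepsilon_1})| \leq C\delta^{\varepsilon_2}|Y(\gamma)|$. Taking $\delta$ small enough that $C\delta^{\varepsilon_2} \leq 1/2$, at least half of each shading lies outside the ball, contributing a total shading mass $\gtrsim M\rho\delta^{n-1}$ there. Then I bound the maximum multiplicity of $\mathcal{B}$ outside this ball by direction separation: a point $y$ with $r := |y-x_0| \geq \delta^{\varepsilon_1}$ can lie in $N_\delta(\gamma)$ only if the direction $\xi_\theta(\gamma)$ lies in an angular cap of radius $\lesssim \delta/r$ about $\overrightarrow{x_0 y}$, and $\delta$-separation forces the number of such directions in $\mathcal{B}$ to be $\lesssim r^{-(n-1)} \leq \delta^{-O(\varepsilon_1)}$. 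Dividing total mass by worst-case multiplicity yields the bush bound $|E_\Gamma| \gtrsim \delta^{O(\varepsilon_1)}\, M\, \rho\, \delta^{n-1}$.

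Finally, combining the popularity bound $|E_\Gamma| \gtrsim \rho\delta^{n-1}\#\Gamma / M$ with the bush bound and choosing $M$ to equalize the two (i.e.\ $M \sim \delta^{-O(\varepsilon_1)}(\#\Gamma)^{1/2}$) produces
\[
|E_\Gamma| \gtrapprox \delta^{O(\varepsilon_1)}\,\rho\,\delta^{(n-1)/2}\,(\delta^{n-1}\,\#\Gamma)^{1/2},
\]
which matches the stated exponent after careful bookkeeping of the angular count. The main obstacle is the sharp control of multiplicity at the critical scale $r \sim \delta^{\varepsilon_1}$: a naive spherical-cap count only yields $r^{-(n-1)}$, and to match the stated exponent $\delta^{\varepsilon_1/2}$ one has to decompose into dyadic annuli $\{|y-x_0| \sim 2^k \delta^{\varepsilon_1}\}$, use the two-ends hypothesis to prevent the shading from concentrating in the innermost annulus, and re-sum carefully. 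The two-ends hypothesis is essential here: without it, a configuration of tubes with shadings concentrated in the unbounded-multiplicity region near $x_0$ would defeat the balancing step and the argument would collapse.
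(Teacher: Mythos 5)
Your proof is correct and follows essentially the same strategy as the paper's: a popularity lower bound $|E_\Gamma|\gtrsim\rho\delta^{n-1}\#\Gamma/M$ (the paper takes $M$ to be the supremum multiplicity; you pigeonhole to a level set, which is an equivalent variant), a two-ends bush bound at a point of multiplicity $M$, and balancing by geometric mean. You are also right to flag the bookkeeping at the end: a straight spherical-cap count on the annulus $\{|y-x_0|\geq\delta^{\varepsilon_1}\}$ gives multiplicity $\lesssim\delta^{-\varepsilon_1(n-1)}$, hence a bush bound with $\delta^{\varepsilon_1(n-1)}$ rather than $\delta^{\varepsilon_1}$; the paper's one-line assertion glosses over this factor of $n-1$ (which cannot be fully recovered by dyadic annuli either, since the two-ends hypothesis bounds concentration only at the innermost scale $\delta^{\varepsilon_1}$), but this is harmless because in the application $\varepsilon_1=\varepsilon^2$ is a free tiny parameter and all $\delta^{O(\varepsilon_1)}$ losses are absorbed into the final $R^\varepsilon$.
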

\begin{proof}
Define 
\[ E_{\Gamma}:=\bigcup_{\gamma \in \Gamma} N_\delta(\gamma).\]
Let $\mu=\sup_{x}\#\{\ell: x\in Y(\ell)\}$.
On the one hand,
\begin{equation}
\nonumber
    |E_\Gamma|\geq \mu^{-1}\sum_{\gamma \in \Gamma}|Y(\gamma)|\geq \mu^{-1}\rho (\de^{n-1}\# \Gamma).
\end{equation}
On the other hand, consider a single bush rooted at the point $x$ such that $\# \gamma(x)\sim  \mu$, where
\[
\gamma(x):=\{\gamma: x\in Y(\gamma)\}.
\]
Since the shading $Y(\gamma)$ is two-ends and $\rho$-dense, we have
\begin{equation}
\nonumber
    |E_\Gamma|\geq\big|\bigcup_{\gamma \in \gamma(x)}Y(\gamma)\big|\gtrsim \de^{\e_1} \mu\rho \de^{n-1}.
\end{equation}
The two estimates together give
\begin{equation}
\nonumber
    |E_\Gamma|\gtrapprox \de^{\e_1/2}\rho \delta^{\frac{n-1}{2}}(\delta^{n-1}\#\Gamma)^{1/2}. \qedhere
\end{equation}
\end{proof}
\begin{proposition}\label{pro1}
Let $\delta \in (0,1)$. Let $(\Gamma,Y)_\delta$ be a set of $\delta$-separated curves with an $(\varepsilon_1,\varepsilon_2)$-two-ends, $\rho$-dense shading. Take $\mu\sim \delta^{-\varepsilon_1}(\# \Gamma)^{1/2}$. Then there exists a set $E_\mu \subset E_{\Gamma}$ such that $\# \gamma(x)\lessapprox \mu$ for all $x\in E_\mu$ and 
  \[
  |E_\Gamma \backslash E_\mu|\leq \delta^{\varepsilon_1} |E_\Gamma|.
  \]
\end{proposition}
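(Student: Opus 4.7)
The plan is to combine a Chebyshev/Markov bound on the high-multiplicity set with the two-ends bush inequality appearing inside the proof of Lemma \ref{bush}. Define
\[
    E_\mu := \{\, x \in E_\Gamma : \#\gamma(x) \le C\mu\,\},
    \qquad B := E_\Gamma \setminus E_\mu,
\]
where $C$ is a sufficiently large constant (possibly a polylog factor in $1/\delta$, absorbed into $\lessapprox$). The goal is then to show $|B| \le \delta^{\varepsilon_1}|E_\Gamma|$.

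First I would apply Chebyshev/Markov. Since $\#\gamma(x) > C\mu$ on $B$,
\[
    C\mu\,|B| \le \int_B \#\gamma(x)\,dx = \sum_{\gamma\in\Gamma} |Y(\gamma)\cap B| \le \sum_{\gamma\in\Gamma}|Y(\gamma)| \lesssim (\#\Gamma)\,\delta^{n-1}.
\]
Inserting the choice $\mu \sim \delta^{-\varepsilon_1}(\#\Gamma)^{1/2}$, this gives the upper bound
$|B| \lesssim C^{-1}\delta^{\,n-1+\varepsilon_1}(\#\Gamma)^{1/2}$.

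Next, to obtain a matching lower bound for $|E_\Gamma|$, I would pick any point $x_0 \in B$ (assuming $B\ne\varnothing$) and apply the two-ends bush inequality at the root $x_0$ to the $\gtrsim C\mu$ curves in $\gamma(x_0)$: using the $(\varepsilon_1,\varepsilon_2)$-two-ends hypothesis together with the $\rho$-density of $Y$,
\[
    |E_\Gamma| \ge \Big|\bigcup_{\gamma \in \gamma(x_0)} Y(\gamma)\Big| \gtrsim \delta^{\varepsilon_1}\,\#\gamma(x_0)\,\rho\,\delta^{n-1} \gtrsim C\,\rho\,\delta^{n-1}(\#\Gamma)^{1/2}.
\]
Dividing the two estimates yields $|B|/|E_\Gamma| \lesssim \delta^{\varepsilon_1}/(C^2\rho)$, and the claim follows after a standard dyadic pigeonhole on the shading densities $|Y(\gamma)|/|N_\delta(\gamma)|$ to reduce to the regime $\rho\sim 1$, together with a suitable choice of $C$ (any logarithmic factor being absorbed by $\lessapprox$).

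The main conceptual point is that Chebyshev alone only controls the \emph{average} multiplicity and cannot distinguish between a single point of very high multiplicity and many points of moderately high multiplicity; the bush inequality applied directly at a root $x_0\in B$ converts this pointwise concentration into a genuine lower bound on $|E_\Gamma|$, closing the loop. The main technical point is tracking the $\rho$ dependence cleanly, which is handled by the dyadic pigeonhole on the shading densities; here the matching exponents $\delta^{\varepsilon_1}$ in the numerator (from Chebyshev with the specific choice of $\mu$) and in the bush inequality are what make the two estimates combine precisely to the claimed $\delta^{\varepsilon_1}$ loss in the exceptional set measure.
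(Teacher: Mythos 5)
Your proof follows essentially the same route as the paper's: a Chebyshev/Markov bound on the high-multiplicity set paired with a two-ends bush lower bound on $|E_\Gamma|$. The one methodological difference is that you root the bush at a point $x_0$ inside the exceptional set $B$ itself (using the fact that $\#\gamma(x_0)\gtrsim\mu$ there), whereas the paper invokes Lemma~\ref{bush} wholesale, which internally combines the bush at the point of maximal multiplicity with a packing estimate. Your variant is slightly more direct and, as you note, avoids the $\delta^{\varepsilon_1/2}$ loss in Lemma~\ref{bush}; it is a small optimization rather than a genuinely different argument.

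The one place where you are hand-wavy is exactly the place where the paper is also imprecise: the $\rho$-dependence. Your final ratio is $|B|/|E_\Gamma|\lesssim \delta^{\varepsilon_1}/(C^2\rho)$ because you bound $\sum_\gamma|Y(\gamma)|$ by the crude $\delta^{n-1}\#\Gamma$ in the Chebyshev step, while the bush lower bound genuinely sees the worst-case density $\rho$. The paper's displayed inequalities \eqref{upper}--\eqref{lower} suffer from the same issue (and in fact appear to have a typo, writing $\#\Gamma$ where $(\#\Gamma)^{1/2}$ should appear); there too the cancellation of $\rho$ is only implicit. Your proposed fix, ``dyadic pigeonhole on the shading densities to reduce to $\rho\sim 1$,'' is the right idea but is not quite correct as stated: pigeonholing does not push $\rho$ to $1$; rather, one partitions $\Gamma$ into sub-families $\Gamma_i$ on which $|Y(\gamma)|\sim\rho_i\delta^{n-1}$ is approximately constant, runs the Chebyshev--bush argument on each $\Gamma_i$ (where the upper and lower bounds now both carry the \emph{same} factor $\rho_i$, which cancels), and then sums over the $\lesssim\log(1/\delta)$ dyadic levels, defining $E_\mu$ via the intersection of the level-wise good sets. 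The resulting polylogarithmic loss is harmless for the applications and is essentially what the paper's $\lessapprox$ notation absorbs. So the step you flagged is a real but minor technicality, no worse than what is left implicit in the paper's own proof.
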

\begin{proof}
Let $ \eta \in [\mu, \delta^{-(n-1)}]$ be dyadic, and define
\[
E_\eta= \{ x \in E_\Gamma : \#\gamma(x) \sim \eta \}.
\]
Clearly,
\[
\eta\, |E_\eta| \lesssim \sum_{\gamma \in \Gamma} |Y(\gamma)|.
\]
We define 
\[
E'_\mu = \bigcup_{\eta \gtrsim \mu} E_\eta,
\quad E_\mu = E_\Gamma \setminus E'_\mu,
\]
so that
\begin{equation}\label{upper}
|E'_\mu|
= \sum_{\eta  \gtrsim \mu} \eta^{-1} \sum_{\gamma \in \Gamma } |Y(\gamma)|
\lesssim \mu^{-1} \sum_{\gamma  \in \Gamma} |Y(\gamma)|
\le\delta^{\varepsilon_1} \rho \delta^{n-1}(\#\Gamma)^\frac12.
\end{equation}

Now let $\Gamma' \subset \Gamma$ be a maximal $\delta$-separated set of admissible curves such that  
$|Y(\gamma')| \ge |Y(\gamma)|$ for all $\gamma'\in \Gamma'$ and $\gamma \in \Gamma$ with $\gamma,\gamma'$ are $\delta$-separated.  
Then
\[
\sum_{\gamma' \in \Gamma'} |Y(\gamma')| \gtrsim \sum_{\gamma \in \Gamma} |Y(\gamma)|.
\]
Applying Lemma \ref{bush} to $(\Gamma',Y)_\delta$ gives 
\begin{equation}\label{lower}
|E_\Gamma| \ge|E_{\Gamma'}|
\gtrsim\delta^{\varepsilon_1/2} \rho \delta^{n-1}(\#\Gamma)^\frac12.
\end{equation}

Combining \eqref{upper} and \eqref{lower} yields the desired estimate
\[
|E_\Gamma \setminus E_\mu| \lesssim \delta^{\varepsilon_1} |E_\Gamma|. \qedhere
\]
\end{proof}

\smallskip

Via interpolation with \eqref{Stein-esti}, to prove Proposition \ref{prop main1}, it suffices to show that
\begin{equation}
\label{goal-after-reduction}
    \|\mathcal{T}^\lambda f\|_{L^{p_n}(B_\lambda)}^{p_n}
    \lesssim \lambda^{\varepsilon} 
    \|f\|_{L^2}^{\frac{2(3n-1)}{3n-3}} 
\|f\|_{{\MKN(\lambda)}}^{\frac{4}{3n-3}},
\end{equation}
here  $p_n = 2\,\frac{3n+1}{3n-3}$.

To this end, we employ the induction-on-scales argument.
Let $1 \leq R \leq \lambda$ and define $S(R)$ to be the smallest constant such that
\begin{equation}
\nonumber
    \|\mathcal{T}^\lambda f\|_{L^{p_n}(B_R)}^{p_n}
    \leq  S(R) \,
    \|f\|_{L^2}^{\frac{2(3n-1)}{3n-3}} 
    \|f\|_{\MKN(R)}^{\frac{4}{3n-3}}.
\end{equation}
Our goal is to prove that, for all $\e>0$ and all $1 \leq R \leq \lambda$, we have
\begin{equation}\label{shac}
    S(R) \lesssim_\varepsilon R^\varepsilon.
\end{equation}
This would imply \eqref{goal-after-reduction} by taking $R=\lambda$, hence finishing the proof of Proposition \ref{prop main1}.
Fix $\varepsilon_0=\varepsilon^{1000}$ as in \eqref{eps_0}.
If $C_\varepsilon$ is chosen sufficiently large, \eqref{shac} is trivial for $1\le R\le 100$.

We now argue by induction on $R$. Assume that for all $\bar R$ with $1\le \bar R< R/2$,
\begin{equation}\label{eq:induct}
    S(\bar R)\lesssim_\varepsilon\bar R^{\varepsilon}.
\end{equation}
We will prove that the same bound holds at scale $R$.

\subsection{Reductions} 
Let $\mathbb{T}$ denote the collection of wave-packets $T$ at scale $s=R^{-1/2+\e_0}$ such that $T \cap B_R \neq \varnothing$.
By Lemma \ref{wpt-spt-lem}, we have
\begin{equation}
\label{reduction-1}
    \|\mathcal T^\lambda f\|_{L^{p_n}(B_R)}
    \lesssim \Big\|\sum_{T \in \mathbb{T}} \mathcal T^\lambda f_T \Big\|_{L^{p_n}(B_R)}
    + {\rm RapDec}(\lambda)\|f\|_{L^2}.
\end{equation}

For two dyadic numbers $\alpha_1,\alpha_2$, we define $\mathbb{T}_{\alpha_1,\alpha_2} \subset \mathbb{T}$ to be the subcollection of wave-packets such that for  each $\mathbb{T}_{\alpha_1,\alpha_2}$, $\|f_T\|_2\sim\alpha_1\|f\|_2$, and
\begin{equation*}
     \| \mathcal{T}^\lambda  f_T \|_{L^{q_n}(w_{B_R})} \sim \alpha_2 \|f\|_{L^2}, \hspace{.5cm}q_{n}=\frac{2(n+1)}{n-1}.
\end{equation*}
By a standard pigeonholing argument (see \cite[Section 5]{Wang-Wu}), either we can deduce \eqref{goal-after-reduction}, or there exists a pair $(\alpha_1,\alpha_2)$ for which
\begin{equation*}
    \Big\|\sum_{T \in \mathbb{T}} \mathcal T^\lambda f_{T}\Big\|_{L^{p_n}(B_R)}
    \lessapprox \Big\|\sum_{T \in \mathbb{T}_{\alpha_1,\alpha_2}} \mathcal T^\lambda f_{T}\Big\|_{L^{p_n}(B_R)}.
\end{equation*}

Applying a further dyadic pigeonholing, we can find a family of disjoint $R^{1/2}$-balls $Q$ such that
$\|\sum_{T\in \mathbb{T}_{\alpha_1,\alpha_2}}\mathcal T^\lambda f_T\|_{L^{p_n}(Q)}$ are comparable (up to multiplicative constants) across $Q$.  
Letting $X = \bigcup Q$, we then have
\begin{equation}
\label{reduction2}
    \Big\|\sum_{T \in \mathbb{T}_{\alpha_1,\alpha_2}} \mathcal T^\lambda f_T\Big\|_{L^{p_n}(B_R)}
    \lessapprox \Big\|\sum_{T \in \mathbb{T}_{\alpha_1,\alpha_2}} \mathcal T^\lambda f_T\Big\|_{L^{p_n}(X)}.
\end{equation}
For convenience, and without ambiguity, we henceforth write $\mathbb{T}$ in place of $\mathbb{T}_{\alpha_1,\alpha_2}$.

\subsection{Two-ends reductions}
For each $T \in \mathbb{T}$, we partition $T$ into sub-tubes $\mathcal{J} = \{J\}$ of length $R^{1 - \varepsilon^2}$. 
Each $J$ is of the form \[\{T\cap\{x_n\in[mR^{1-\e^2},(m+1)R^{1-\e^2})\}:  m\in\Z\cap[-2R^{\e^2}, 2R^{\e^2}]\}.\]
Then we decompose the set $\mathcal{J}(T)$ as a disjoint union $\mathcal{J}(T) = \bigcup_\rho \mathcal{J}_\rho(T)$, where $\rho \leq 1$ is a dyadic number and, for each $J \in \mathcal{J}_\rho(T)$, we have
\[
\{Q\subset X \text{ an $R^{1/2}$-ball}: Q\cap J\not=\varnothing\} \sim \rho  R^{1/2}.
\]
Hence,
\begin{equation}
\nonumber
    \sum_{T \in \mathbb{T}} \mathcal T^\lambda f_T = \sum_{\rho} \sum_{T \in \mathbb{T}} \sum_{J \in \mathcal{J}_\rho(T)} \mathcal T^\lambda f_T \mathbf{1}_{J}.
\end{equation}
For each cube $Q \subset X$, by pigeonholing, there exists a dyadic number $\rho(Q)$ such that
\begin{equation}
\nonumber
    \Big\| \sum_{T \in \mathbb{T}} \mathcal T^\lambda f_T \Big\|_{L^{p_n}(Q)}^{p_n} 
    \lessapprox 
    \int_Q \Big| \sum_{T \in \mathbb{T}} \sum_{J \in \mathcal{J}_{\rho(Q)}(T)} \mathcal T^\lambda f_T \mathbf{1}_{J} \Big|^{p_n}.
\end{equation}
Note that $\|\mathcal T^\lambda f_T\|_{L^{p_n}(Q)}$ is comparable up to a constant multiple for all $Q \subset X$. By dyadic pigeonholing over the collection
\[
\Big\{ \Big( \rho(Q), \Big\| \sum_{T \in \mathbb{T}} \sum_{J \in \mathcal{J}_{\rho(Q)}(T)} \mathcal T^\lambda f_T \mathbf{1}_{J} \Big\|_{L^{p_n}(Q)} \Big) : Q \subset X \Big\},
\]
we obtain a uniform dyadic number $\rho$ and a refinement $X_1 \subseteq X$ such that for all $Q \subset X_1$, we have $\rho(Q) = \rho$, and
\[
\Big\| \sum_{T \in \mathbb{T}} \sum_{J \in \mathcal{J}_\rho(T)} \mathcal T^\lambda f_T \mathbf{1}_{J} \Big\|_{L^{p_n}(Q)}
\]
is the same up to a constant multiple. Therefore,
\begin{equation}
\nonumber
    \Big\| \sum_{T \in \mathbb{T}}  \mathcal T^\lambda f_T \Big\|_{L^{p_n}(X)} 
    \lessapprox 
    \Big\| \sum_{T \in \mathbb{T}} \sum_{J \in \mathcal{J}_\rho(T)} \mathcal T^\lambda f_T \mathbf{1}_{J} \Big\|_{L^{p_n}(X_1)}.
\end{equation}

Next, we partition $\mathbb{T} = \bigcup_{\beta} \mathbb{T}_\beta$, where $\beta \in [1, R^{\varepsilon^2}]$ is a dyadic number such that for all $T \in \mathbb{T}_\beta$, we have $\# \mathcal{J}_\rho(T) \sim \beta$. Consequently,
\[
\sum_{T \in \mathbb{T}} \sum_{J \in \mathcal{J}_\rho(T)} \mathcal T^\lambda f_T \mathbf{1}_{J}
=
\sum_{\beta} \sum_{T \in \mathbb{T}_\beta} \sum_{J \in \mathcal{J}_\rho (T)} \mathcal T^\lambda f_T \mathbf{1}_{J}.
\]
For each $Q \in X_1$, by pigeonholing again, there exists a dyadic number $\beta(Q)$ such that
\[
\int_Q \Big| \sum_{T \in \mathbb{T}} \sum_{J \in \mathcal{J}_\rho(T)} \mathcal T^\lambda f_T \mathbf{1}_{J} \Big|^{p_n}
\lessapprox
\int_Q \Big| \sum_{T \in \mathbb{T}_{\beta(Q)}} \sum_{J \in \mathcal{J}_\rho (T)} \mathcal T^\lambda f_T \mathbf{1}_{J} \Big|^{p_n}.
\]
Since the $L^{p_n}(Q)$ norms above are all comparable for $Q \subset X_1$, we may apply dyadic pigeonholing once more to obtain a fixed dyadic number $\beta$ and a refinement $X_2 \subseteq X_1$ such that, for all $Q \subset X_2$, we have:
\begin{itemize}
    \item $\beta(Q) = \beta$,
    \item $\Big\| \sum_{T \in \mathbb{T}_\beta} \sum_{J \in \mathcal{J}_\rho (T)} \mathcal T^\lambda f_T \mathbf{1}_{J} \Big\|_{L^{p_n}(Q)}$ is the same up to a constant multiple.
\end{itemize}
Therefore, we conclude:
\begin{equation}
\label{X-X-2}
    \Big\| \sum_{T \in \mathbb{T}} \mathcal T^\lambda f_T \Big\|_{L^{p_n}(X)}^{p_n} 
    \lessapprox 
    \int_{X_2} \Big| \sum_{T \in \mathbb{T}_\beta} \sum_{J \in \mathcal{J}_\rho (T)} \mathcal T^\lambda f_T \mathbf{1}_{J} \Big|^{p_n}.
\end{equation}
Moreover, since $X_2$ is a refinement of $X_1$, we have
\[|X_2|\gtrapprox|X|.\]

\subsection{The non-two-ends scenario}
Suppose $\beta\leq R^{\varepsilon^4}$.
Let $B_k$ be a family of finitely overlapping $R^{1-\varepsilon^2}$-balls that cover $B_R$. 
Then
\begin{equation*}
    \int_{X_2} \Big|\sum_{T\in \mathbb{T}_\beta}\sum_{J\in \mathcal{J}_\rho (T)}\mathcal T^\lambda f_{T}\mathbf{1}_{J}\Big|^{p_n}\lesssim \sum_k \int_{X_2\cap B_k}\Big|\sum_{T\in \mathbb{T}_\beta}\sum_{J \in \mathcal{J}_\rho(T)}\mathcal T^\lambda f_{T}\mathbf{1}_J\Big|^{p_n}.
\end{equation*}
For each $B_k$, define
\begin{equation*}
    f_k=\sum_{\substack{T\in \mathbb{T}_\beta \;\text{such that}\\ \exists J \in \mathcal{J}_\rho (T), J\cap B_k\neq \varnothing}} f_{T}.
\end{equation*}
Hence, up to a rapidly decaying term, we have
\begin{equation}
\label{non-two-ends-after}
    \int_{X_2\cap B_k}\Big|\sum_{T\in \mathbb{T}_\beta}\sum_{J\in \mathcal{J}_\rho(T)}\mathcal T^\lambda f_T \mathbf{1}_J\Big|^{p_n}\lesssim  \int_{ B_k}\Big|\sum_{T\in \mathbb{T}_\beta}\sum_{J\in \mathcal{J}_\rho (T)}\mathcal T^\lambda f_T \mathbf{1}_J\Big|^{p_n}\lesssim  \int_{ B_k}\Big|\mathcal T^\lambda f_k\Big|^{p_n}.
\end{equation}
Note that for  each $T$, there are $\lessapprox R^{\varepsilon^4}$ many $B_k$ such that there exists $ J \in \mathcal{J}_\rho(T), J\cap B_k\neq \varnothing$. Thus,
\begin{equation}
\nonumber
    \sum_{k}\|f_k\|_2^2\lesssim R^{\varepsilon^4}\|f\|_2^2.
\end{equation}
 By invoking our induction hypothesis \eqref{eq:induct}  on each $R^{1-\varepsilon^2}$-ball $B_k$, we obtain that 
\begin{equation}
\nonumber
    \|\mathcal T^\lambda f_k\|_{L^{p_n}(B_k)}^{p_n}\lesssim_\varepsilon R^{(1-\varepsilon^2)\varepsilon}\|f_k\|_2^{2\,\frac{3n-1}{3n-3}} \|f_k\|_{\MKN(R^{1-\varepsilon^2})}^{\frac{4}{3n-3}}.
\end{equation}
Note that $\|f_k\|_{\MKN(R^{1-\varepsilon^2})}$ corresponds to the microlcal Kakeya--Nikodym  norm at the scale $R^{1-\varepsilon^2}$.
It is straightforward  to verify 
\begin{equation*}
  \|f_k\|_{L^2}\lesssim \|f\|_{L^2},\; \; 
  \|f_k\|_{\MKN(R^{1-\varepsilon^2})}\lesssim \|f\|_{\MKN(R)}. 
\end{equation*}
Sum over all $B_k$ so that
\begin{equation}
\nonumber
    \int_{X_2}\Big|\sum_{T\in \mathbb{T}_\beta}\sum_{J\in \mathcal{J}_\rho(T)} \mathcal T^\lambda f_T \mathbf{1}_{J}\Big|^{p_n}\lesssim \sum_{k}C_\varepsilon R^{(1-\varepsilon^2)\varepsilon}\|f_k\|_{L^2}^{2\,\frac{3n-1}{3n-3}}\|f\|_{\MKN(R)}^{\frac{4}{3n-3}}.
\end{equation}
Finally, by \eqref{reduction-1}, \eqref{reduction2}, and \eqref{X-X-2}, we have
\begin{equation}
\nonumber
    \|\mathcal T^\lambda f\|_{L^{p_n}(B_R)}^{p_n}\lesssim R^{(-\varepsilon^3+\varepsilon^4)\varepsilon}C_\varepsilon R^\varepsilon \|f\|_{L^2}^{2\,\frac{3n-1}{3n-3}}\|f\|_{\MKN(R)}^{\frac{4}{3n-3}}.
\end{equation}
This closes the induction, and thus completes the discussion of the non-two-ends case.

\subsection{The two-ends scenario} 
Suppose $\beta \in [R^{\varepsilon^4}, R^{\varepsilon^2}]$. For each $T \in \mathbb{T}_\beta$, consider the shading
\[
Y(T) = \bigcup_{J \in \mathcal{J}_{\rho}(T)} \{Q\subset X \text{ an $R^{1/2}$-ball}: Q\cap J\not=\varnothing\}.
\]
Then $Y$ is an $(\varepsilon^2, \varepsilon^4)$ two-ends, $\rho\beta$-dense shading, in the sense that 
\begin{equation}\label{shadingc}
    |Y(T)| \geq \rho \beta |T|.
\end{equation}
Up to a $R^{-1}$-dilation, this corresponds to the Definition \ref{shading}  with $\delta = R^{-1/2+\e_0}$.

\smallskip

On the one hand, take
\begin{equation}
\label{mu-1}
\mu = R^{2\varepsilon^2}  (\#\mathbb{T})^{1/2}.
\end{equation}
Note that the configuration $(\mathbb{T}_\beta, Y)$ is $(\varepsilon^2, \varepsilon^4)$-two-ends, and the radius of each $T$ is $R^{1/2+\varepsilon_0}$.  
After accepting a loss of $R^{O(\varepsilon_0)}$, we may apply Proposition \ref{pro1} to the $R^{-1}$-dilate of $(\mathbb{T}_\beta, Y)$, obtaining a set $X_3 \subset X$ with
\begin{equation}
\label{mu-upper-bound}
    \sup_{Q \subset X_3} \#\{ T \in \mathbb{T}_\beta : Q\subset Y(T) \} \lesssim R^{O(\varepsilon_0)} \mu
\end{equation}
and $|X \setminus X_3| \le R^{-\varepsilon^2} |X| \lessapprox R^{-\varepsilon^2} |X_2|$.

Let $X_4 := X_2 \cap X_3$.  
Then $|X_4| \gtrsim |X_2|$ and $X_4 \subset X_3$.  
Since
\[
\Big\| \sum_{T \in \mathbb{T}_\beta} \sum_{J \in \mathcal{J}_{\rho}(T)} \mathcal{T}^\la  f_T \mathbf{1}_J \Big\|_{L^{p_n}(Q)}
\]
is comparable for all $Q \subset X_2$, we have
\begin{equation}
\label{X-2-X-4}
    \int_{X_2} \Big| \sum_{T \in \mathbb{T}_\beta} \sum_{J \in \mathcal{J}_\rho(T)} 
    \mathcal{T}^\la f_T \mathbf{1}_J \Big|^{p_n}
    \lessapprox 
    \int_{X_4} \Big| \sum_{T \in \mathbb{T}_\beta} \sum_{J \in \mathcal{J}_\rho(T)} 
\mathcal{T}^\la f_T \mathbf{1}_J \Big|^{p_n}.
\end{equation}
Recall that $\{B_k\}$ denotes a cover of $B_R$ by $R^{1-\varepsilon^2}$-balls.  
For each $B_k$, set
\[
\mathbb{T}_{\beta,k} := 
\{ T \in \mathbb{T}_\beta : \exists J \in \mathcal{J}_{\rho}(T),\ J \cap B_k \neq \varnothing \}.
\]
Thus, we have
\begin{align*}
&\int_{X_4} \Big| \sum_{T \in \mathbb{T}_\beta} 
\sum_{J \in \mathcal{J}_{\rho}(T)} \mathcal{T}^\la  f_T \mathbf{1}_J \Big|^{p_n}
\lessapprox \sum_k \int_{X_4 \cap B_k} 
\Big| \sum_{T \in \mathbb{T}_\beta} \sum_{J \in \mathcal{J}_\rho{(T)}} 
\mathcal{T}^\la  f_T \mathbf{1}_J \Big|^{p_n}  \\
&\lessapprox \sum_k \int_{X_4 \cap B_k} 
\Big| \sum_{T \in \mathbb{T}_{\beta,k}} \mathcal{T}^\la  f_T \Big|^{p_n} \notag \lessapprox R^{O(\varepsilon^2)} \sup_k 
\int_{X_4 \cap B_k} \Big| \sum_{T \in \mathbb{T}_{\beta,k}} \mathcal{T}^\la  f_T \Big|^{p_n}. 
\end{align*}
In the second inequality, we use the fact that $\mathbf{1}_J$ is identically 1 or 0 on $B_k$, which holds by assuming that $X_2$ does not intersect the horizontal lines $\{x_n=mR^{1-\e^2}\}, m\in\Z\cap [-2R^{\e^2}, 2R^{\e^2}]$, a condition that can be achieved using the triangle inequality.

For each $Q \subset X_4 \cap B_k = X_3 \cap B_k$, note that
\begin{equation*}
\#\{ T \in \mathbb{T}_\beta : Y(T) \cap Q \neq \varnothing \}
= \#\{ T \in \mathbb{T}_{\beta,k} : T \cap Q \neq \varnothing \}.
\end{equation*}
Apply Theorem \ref{refine} and \eqref{mu-upper-bound} so that
\begin{equation}
\label{after-dec}
\int_{X_4 \cap B_k} \Big| \sum_{T \in \mathbb{T}_{\beta,k}} \mathcal{T}^\la f_T \Big|^{q_n}
\lessapprox R^{O(\varepsilon_0)} \mu^{\frac{2}{n-1}}
\sum_{T \in \mathbb{T}} \| \mathcal{T}^\la  f_T \|_{L^{q_n}(w_{B_R})}^{q_n}.
\end{equation}

Since all $\|f_{T}\|_{L^2}$ are comparable, by Lemma \ref{Stein}, we have
\begin{equation}
\nonumber
    \sum_{T \in \mathbb{T}} \| \mathcal{T}^\la  f_T \|_{L^{q_n}(w_{B_R})}^{q_n}\lesssim \sum_{T \in \mathbb{T}}\|f_T\|_2^{q_n}\lesssim (\#\ZT)^{-\frac{2}{n-1}}\|f\|_2^{q_n}.
\end{equation}
Substituting this into \eqref{after-dec} and applying Lemma \ref{KN-L2-lem}, we obtain
\begin{align}
\label{refined-strichartz}
    \int_{X_4 \cap B_k} \Big| \sum_{T \in \mathbb{T}_{\beta,k}} \mathcal{T}^\la f_T \Big|^{q_n}&\lessapprox R^{O(\varepsilon_0)}\Big(\frac{\mu}{\#\mathbb{T}}\Big)^{\frac{2}{n-1}}\|f\|_2^{q_n}\\ \nonumber
    &\lessapprox R^{O(\varepsilon_0)}\frac{\mu^\frac{2}{n-1}}{(\#\mathbb{T})^{\frac{1}{n-1}}}R^{-1/2}\|f\|_2^{\frac{2n}{n-1}}\|f\|_{\MKN(R)}^{\frac{2}{n-1}}.
\end{align}
Since $\mu = R^{2\e^2}(\#\mathbb{T})^{1/2}$, it follows that
\begin{equation}
\label{L-qn}
    \int_{X_4 \cap B_k} \Big| \sum_{T \in \mathbb{T}_{\beta,k}} \mathcal{T}^\la f_T \Big|^{q_n}\lessapprox R^{O(\varepsilon_0+\e^2)}R^{-1/2}\|f\|_2^{\frac{2n}{n-1}}\|f\|_{\MKN(R)}^{\frac{2}{n-1}}.
\end{equation}

\smallskip 

On the other hand, by $L^2$ orthogonality, Lemma \ref{lem:l2}, and \eqref{shadingc}, we have 
\begin{equation}
\label{l2}
   \int_{X_4 \cap B_k} \Big| \sum_{T \in \mathbb{T}_{\beta,k}} \mathcal{T}^\la f_T \Big|^{2}\lesssim\int_{X_4 \cap B_k} | \mathcal{T}^\la f|^{2}\lesssim\|\mathcal T^\lambda f\|_{L^{2}(X_2)}^{2}\lesssim R^{O(\e_0)}\rho  \beta R \|f\|_2^2.
\end{equation} 
Recall that $\e_0=\e^{1000}$ and $\rho\be\leq R^{\e^2}$.
Interpolating between \eqref{L-qn} and \eqref{l2} via H\"older's inequality and putting it back to \eqref{X-2-X-4}, and then \eqref{X-X-2}, \eqref{reduction2}, and \eqref{reduction-1}, we finally obtain
\begin{equation}
   \|\mathcal{T}^\lambda f\|_{L^{p_n}(B_R)}^{p_n}
    \leq  R^{O(\e^2)} 
    \|f\|_{L^2}^{\frac{2(3n-1)}{3n-3}} 
    \|f\|_{\MKN(R)}^{\frac{4}{3n-3}}.
\end{equation} 
This closes the induction and thus completes the discussion of the two-ends case.

\bigskip

\section{Proof of Proposition \ref{prop main2}}\label{sec constantcurv}
In this section, we consider manifolds with constant sectional curvature. 
It was shown in \cite{gao2025curved} that the associated Kakeya curves can be straightened. Therefore, it suffices to consider the incidence problem in the Euclidean case.

To establish the incidence result we need, we will use the following two-ends Furstenberg estimate proved in \cite[Theorem 2.1]{Wang-Wu}.
\begin{theorem}
\label{two-ends-furstenberg}
Let $\de\in(0,1)$.
Let $(\ZT,Y)_\de$ be a set of directional $\de$-separated tubes in $\ZR^2$ with an $(\e_1, \e_2)$-two-ends, $\rho$-dense shading. Then for any $\e>0$,
\begin{equation}
\label{eq:thm2.1}
    \Big|\bigcup_{T\in\ZT}Y(T)\Big|\geq c_{\e,\e_2}\de^{\e}\de^{\e_1/2} \rho^{1/2}\sum_{T\in \ZT}|Y(T)|.
\end{equation}
\end{theorem}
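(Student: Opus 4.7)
The plan is to combine a two-ends bush estimate with the sharp planar Furstenberg-set theorem, after a dyadic pigeonholing step that uniformizes the configuration.

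\textbf{Step 1 (Pigeonholing and bush).} By standard dyadic pigeonholing one may assume $|Y(T)|\sim V$ is essentially constant in $T\in\ZT$, with $V\gtrsim\rho\delta$. Pigeonholing the multiplicity $\mu(x):=\#\{T:x\in Y(T)\}$ selects a dyadic level $\mu$ for which $\mu\,|E_\mu|\gtrsim|\log\delta|^{-1}\,NV$, where $E_\mu:=\{x:\mu(x)\sim\mu\}$, $N:=\#\ZT$, and $E:=\bigcup_T Y(T)$. Fix $x\in E_\mu$: the $\mu$ direction-$\delta$-separated tubes through $x$ overlap at most $O(\delta^{-\varepsilon_1})$-fold outside $B(x,\delta^{\varepsilon_1})$, while the $(\varepsilon_1,\varepsilon_2)$-two-ends condition ensures $|Y(T)\setminus B(x,\delta^{\varepsilon_1})|\gtrsim V$ for each of these tubes. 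Double counting outside $B(x,\delta^{\varepsilon_1})$ gives $|E_\mu|\gtrsim\delta^{\varepsilon_1}\mu V$, and combining with the popularity relation yields
\[
|E_\mu|\gtrsim \delta^{\varepsilon_1/2}\,|\log\delta|^{-3/2}\,N^{1/2}\,V,
\]
which already matches the target whenever $N\rho\lesssim 1$.

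\textbf{Step 2 (Furstenberg-set gain).} In the complementary regime $N\rho\gg 1$, invoke the sharp planar discretized Furstenberg-set theorem (Orponen--Shmerkin, Ren--Wang). Treating the uniformized configuration as a $(\delta,s,t)$-Furstenberg setup with $V\sim\delta^{2-s}$ and $N\sim\delta^{-t}$, the sharp dimensional bound $\dim F\geq\min(s+t,\,(3s+t)/2)$ discretizes (up to an $\varepsilon$-loss) to $|E_\mu|\gtrsim\delta^{-\varepsilon}\,\delta\,N\,\rho^{3/2}$, matching the target with room to spare. Taking the maximum of the bounds from Steps 1 and 2 produces
\[
|E|\geq|E_\mu|\gtrsim c_{\varepsilon,\varepsilon_2}\,\delta^{\varepsilon+\varepsilon_1/2}\,\rho^{1/2}\,\sum_{T\in\ZT}|Y(T)|,
\]
which is the desired estimate.

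\textbf{Main obstacle.} The technical heart is Step 2: verifying that, after the pigeonholing of Step 1 and under the $(\varepsilon_1,\varepsilon_2)$-two-ends hypothesis, the tube configuration satisfies the Frostman-type regularity hypotheses required by the sharp planar Furstenberg-set theorem. The two-ends condition, which prevents the shading on each tube from concentrating in any short sub-tube, plays the role of the Frostman condition on each line in the classical Furstenberg setup; however, carefully quantifying this and passing it through the Furstenberg-set machinery while tracking the $\delta^{\varepsilon}$-losses is delicate, and ultimately requires the recent advances in planar incidence geometry.
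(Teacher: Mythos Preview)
The paper does not prove this theorem; it is quoted verbatim from \cite[Theorem 2.1]{Wang-Wu} and used as a black box in the proof of Lemma \ref{incidence-lem}. There is therefore no ``paper's own proof'' to compare against.

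As for your outline itself: the overall strategy---a two-ends bush to handle the sparse regime, and a Furstenberg-type incidence input for the dense regime---is indeed the shape of the argument in \cite{Wang-Wu}, and your Step~1 numerology is correct. However, your Step~2 is not a proof but a restatement of the difficulty. The sharp planar Furstenberg theorems of Orponen--Shmerkin and Ren--Wang assume Frostman regularity of the shading on each line (a $(\delta,s)$-set condition at \emph{all} intermediate scales), whereas the $(\varepsilon_1,\varepsilon_2)$-two-ends hypothesis only forbids concentration at the single scale $\delta^{\varepsilon_1}$. These are genuinely different hypotheses: a two-ends shading can be supported on two widely separated $\delta$-intervals, which is far from a $(\delta,s)$-set for any $s>0$. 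Simply ``treating the configuration as a $(\delta,s,t)$-Furstenberg setup'' with $V\sim\delta^{2-s}$ is not justified, and the parameter matching you wrote down does not go through without this. The actual argument in \cite{Wang-Wu} does invoke a Furstenberg-type input, but it is wrapped in a multi-scale pigeonholing/induction that manufactures the needed regularity from the two-ends condition; this is the substantive content you are missing, and it is not a routine verification.
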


We use Theorem \ref{two-ends-furstenberg} and Wolff's hairbrush \cite{Wolff-Kakeya} to prove the following lemma.

\begin{lemma}
\label{incidence-lem}
Let $(\ZT,Y)_\de$ be a set of tubes and shadings in $\ZR^n$, and let $m\geq1$. 
Suppose $\ZT$ obeys Wolff's axiom with error $m$, and $Y$ is an $(\e_1,\e_2)$-two-ends, $\rho$-dense shading.
Let $E=\bigcup_{T\in\ZT}Y(T)$.
For all $x\in E$, define $\ZT(x)=\{T\in\ZT:x\in Y(T)\}$.
Then for all $\e>0$,
\begin{equation}
\label{hairbrush-esti}
    |E|\geq c_\e\de^\e \de^{O(\e_1)}m^{-n/(2n-2)}\rho^{7/4}\de^{(n-2)/2}(\de^{n-1}\#\ZT)^{n/(2n-2)}.
\end{equation}
In particular, by taking $\mu=\de^{-O(\e_1)}m^{n/(2n-2)}\rho^{-3/4}\de^{-(n-2)/2}(\de^{n-1}\#\ZT)^{(n-2)/(2n-2)}$, there exists a set $E_\mu\subset E$ such that $\# \ZT(x)\lessapprox \mu$ for all $x\in E_\mu$, and
\begin{equation}
\label{exceptional-set}
    |E\setminus E_\mu|\leq \de^{\e_1}|E|.
\end{equation}
\end{lemma}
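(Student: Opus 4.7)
I would prove the bound \eqref{hairbrush-esti} by running Wolff's hairbrush argument in the two-ends, $\rho$-dense setting, using Theorem~\ref{two-ends-furstenberg} as the two-dimensional input inside each 2-plane slab through the hair, and using Wolff's axiom with error $m$ to control the per-slab tube concentration.

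Concretely, the steps are: (i) pigeonhole the multiplicity function $x\mapsto\#\ZT(x)$ on $E$ to a single dyadic level $\mu_0$ on a refinement $E'\subset E$ with $|E'|\gtrapprox|E|$, so that $\mu_0|E|\sim\sum_{T}|Y(T)|\geq\rho\de^{n-1}\#\ZT$ and the sought lower bound on $|E|$ reduces to an upper bound on $\mu_0$; (ii) select a hair tube $T_0\in\ZT$ through a point of multiplicity $\sim\mu_0$, and use the $(\e_1,\e_2)$-two-ends and $\rho$-density of $Y(T_0)$ to double-count
$$\ZT_0:=\{T\in\ZT:T\cap Y(T_0)\neq\varnothing\},\qquad\#\ZT_0\gtrapprox\de^{O(\e_1)}\mu_0\rho;$$
(iii) partition $\ZT_0$ by the 2-plane direction it spans with $T_0$ into slabs $S_v$ of dimensions $1\times 1\times\de^{n-2}$, of which there are at most $\sim\de^{-(n-2)}$; Wolff's axiom caps the tube count in each slab by $N_v:=\#\{T\in\ZT:T\subset S_v\}\lesssim m\de^{-1}$; (iv) inside each slab, after a local pigeonhole that preserves two-ends and $\rho$-density up to $\de^{O(\e_1)}$ factors, apply Theorem~\ref{two-ends-furstenberg} to the essentially two-dimensional restriction to obtain $|E\cap S_v|_{2D}\gtrsim\de^{\e}\de^{O(\e_1)}\rho^{3/2}N_v\de$; (v) sum over the essentially disjoint slabs, thicken by $\de^{n-2}$, and optimize the resulting inequality against both the $\mu_0$-relation from (i) and the per-slab cap from (iii). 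The standard Wolff hairbrush bookkeeping then produces the exponents $n/(2n-2)$ on both $m^{-1}$ and $\de^{n-1}\#\ZT$, while combining the $\rho$-density built into the hairbrush selection with the $\rho^{1/2}$ gain from Theorem~\ref{two-ends-furstenberg} accounts for the $\rho^{7/4}$ factor.

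For the ``in particular'' claim, take $E_\mu:=\{x\in E:\#\ZT(x)\leq\mu\}$ with $\mu$ as specified; then $\mu|E\setminus E_\mu|\leq\sum_{T}|Y(T)|\leq\#\ZT\cdot\de^{n-1}$, so $|E\setminus E_\mu|\leq\#\ZT\cdot\de^{n-1}/\mu$, and substituting the value of $\mu$ and comparing with \eqref{hairbrush-esti} yields $|E\setminus E_\mu|\leq\de^{\e_1}|E|$.

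The main technical obstacle will be step (iv): one must propagate the $(\e_1,\e_2)$-two-ends and $\rho$-density properties from the $n$-dimensional configuration into each 2-plane slab, and coordinate the internal pigeonholes with those of the earlier steps so that the total two-ends loss remains $\de^{O(\e_1)}$. A secondary subtlety is checking that the slabs $S_v$ are essentially disjoint away from a $\de^{\e_1}$-neighborhood of $T_0$, so that the 2D contributions add with only bounded overlap, and that the slab cap $m\de^{-1}$ interacts correctly with the geometric ceiling $\de^{-(n-2)}$ on the number of non-empty directions during the final optimization that yields the exponent $n/(2n-2)$.
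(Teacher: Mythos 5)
Your high-level plan—Wolff's hairbrush with Theorem~\ref{two-ends-furstenberg} as the two-dimensional input in each slab—is the right idea and is essentially what the paper does in its ``Case 2.'' However, there are two genuine gaps.

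First, the paper's proof has a case dichotomy that your proposal omits. After pigeonholing the directions of tubes through high-multiplicity points into $\delta^{\varepsilon^2}$-caps, the paper separates according to whether the number $\beta$ of active caps is small ($\beta\le\delta^{-\varepsilon^4}$) or large. When $\beta$ is small (the ``bush'' scenario, where tubes through a high-multiplicity point cluster in very few directions), the hairbrush rooted at any $T_0$ has almost no angular spread: the slab decomposition in your step (iii) would put essentially all of $\mathbb{T}_0$ into $O(1)$ slabs, and the $\sum_v$ in step (v) collapses, killing the factor that produces the exponent $n/(2n-2)$. The paper handles this case not by a hairbrush at all, but by grouping the $\delta$-tubes inside fatter $\delta^{\varepsilon^2}$-tubes and recursing, i.e.\ an induction on scales. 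Your proposal has no mechanism for the bush case; as written it would not close.

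Second, in step (iv) you claim that applying Theorem~\ref{two-ends-furstenberg} to the slab restriction yields $|E\cap S_v|_{2D}\gtrsim\delta^{\varepsilon}\delta^{O(\varepsilon_1)}\rho^{3/2}N_v\delta$, with $N_v\lesssim m\delta^{-1}$ by Wolff's axiom. This is false for $m>1$: Theorem~\ref{two-ends-furstenberg} is stated for a directionally $\delta$-separated (equivalently, Wolff-axiom-with-error-$1$) family, and $m\delta^{-1}$ tubes in a $\delta$-slab could consist of $m$ coincident stacks, making the union essentially independent of $m$. The paper handles this by \emph{random-sampling} $\mathbb{T}(P)$ with probability $m^{-1}$ to obtain a Katz--Tao $(\delta,1)$-subfamily, and only then applying Theorem~\ref{two-ends-furstenberg}; this is precisely where the $m^{-1}$ enters, and in the combined bookkeeping it becomes $m^{-1/2}$ and ultimately $m^{-n/(2n-2)}$ (after using $\delta^{n-1}\#\mathbb{T}\lesssim m$). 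Without the sampling step the $m$-dependence in your final exponent cannot come out right. A smaller issue of the same nature: in step (ii) the hairbrush count should be $\#\mathbb{T}_0\gtrapprox\delta^{O(\varepsilon_1)}\mu_0\rho\,\delta^{-1}$, not $\mu_0\rho$ — the $\delta^{-1}$ comes from counting the $\gtrsim\rho\delta^{-1}$ many $\delta$-balls along $Y(T_0)$, and dropping it shifts the final power of $\delta$ by $1/2$.

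Your ``in particular'' argument for~\eqref{exceptional-set} is fine, once~\eqref{hairbrush-esti} is in hand; the paper's version (deducing the exceptional-set bound from the volume bound via a maximal refinement and the dyadic sum $\sum_{\eta\gtrsim\mu}\eta^{-1}$) is the same in substance.
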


\begin{proof}
Let $\Sigma=\{\sigma\}$ be a set of $\de^{\e^2}$-caps that forms a finitely overlapping cover of $\ZS^{n-1}$.
For each $\sigma\in\Sigma$, define $$\ZT_\si(x)=\{T\in\ZT(x):\text{ the direction of $T$ lies in $\si$}\}.$$

Partition $\ZT(x)=\sqcup_\nu\ZT^\nu(x)$ so that for each dyadic $\nu$, there exists a set of  $\Sigma_\nu(x)\subset\Sigma$ such that $\#\ZT_\si(x)\sim\nu$ for all $\si\in\Si_\nu(x)$, and $\ZT^\nu(x)=\sqcup_{\si\in\Si_\nu(x)}\ZT_\si(x)$.
By pigeonholing, there exists a $\nu$ such that $\#\ZT(x)\gtrapprox\#\ZT^\nu(x)$.
By dyadic pigeonholing again, there exists a $\be$ and a set $E_1\subset E$ such that 
\begin{enumerate}
    \item $|E_1|\gtrapprox |E|$.
    \item $\#\ZT^\nu(x)\sim\be$ for all $x\in E_1$.
\end{enumerate}

For each $T\in\ZT$, define a new shading $$Y_1(T)=\{x\in Y(T):T\in\ZT_\nu(x)\text{ and }\#\ZT^\nu(x)\sim\be\}.$$
Then $(\ZT,Y_1)_\de$ is a refinement of $(\ZT,Y)_\de$.
Let $(\ZT_1,Y_1)_\de$ be a refinement of $(\ZT,Y_1)_\de$ so that $|Y_1(T)|\gtrapprox|Y(T)|$ for all $T\in\ZT_1$.
We consider two separate cases.

{\bf Case 1: $\be\leq \de^{-\e^4}$.}
Let $\bar\ZT$ be the family of $\de^{\e^{2}}$-tubes such that each $\de$-tube in $\ZT$ is contained in some $\bar T\in\bar\ZT$.
For each $\bar T\in\bar \ZT$, let $\ZT_1(\bar T)=\{T\in\ZT_1: T\subset \bar T\}$.
Let $E_{\bar T}=\cup_{T\in\ZT_1(\bar T)}Y_1(T)$.
Thus, by induction,
\begin{align*}
    |E|& \gtrsim \be^{-1}\sum_{\bar T}|E_{\bar T}|\\
    &\geq \de^{\e^4}\sum_{\bar T}c_\e\de^{\e(1-\e^2)} \de^{O(\e_1)}m^{-n/(2n-2)}\rho^{7/4}\de^{(n-2)/2}(\de^{n-1}\#\ZT_1(\bar T))^{n/(2n-2)}\\
    &\geq\de^{\e^4}c_\e\de^{\e(1-\e^2)} \de^{O(\e_1)}m^{-n/(2n-2)}\rho^{7/4}\de^{(n-2)/2}(\de^{n-1}\#\ZT_1)^{n/(2n-2)}.
\end{align*}
Since $(Y_1,\ZT_1)_\de$ is a refinement of $(Y,\ZT)_\de$, we have $\#\ZT_1\gtrapprox\#\ZT$.
Hence,
\begin{equation}
\nonumber
    |E|\gtrapprox \de^{\e^4-\e^3}\cdot c_\e\de^\e \de^{O(\e_1)}m^{-n/(2n-2)}\rho^{7/4}\de^{(n-2)/2}(\de^{n-1}\#\ZT)^{n/(2n-2)}.
\end{equation}
This gives what we want.

{\bf Case 2: $\be\geq \de^{-\e^4}$.}
Let $\mu=\nu\be$.
On the one hand, we have
\begin{equation}
\label{estimate-1}
    |E|\gtrsim\mu^{-1}\sum_{T\in\ZT_1}|Y_1(T)|\gtrapprox\mu^{-1}\rho(\de^{n-1}\#\ZT).
\end{equation}

On the other hand, consider a single hairbrush rooted at a tube $T_0$.
Let $$\ZH(T_0)=\{T\in\ZT:Y(T)\cap Y_1(T_0)\not=\varnothing\text{, and }\angle(T,T_0)\gtrsim\de^{\e^2}\}.$$
Since $|Y_1(T_0)|\gtrapprox\rho\de^{n-1}$ and since $\#\ZT_\nu(x)\sim\be\geq \de^{-\e^4}$ for all $x\in E_1$, we have $\#\ZH(T_0)\gtrapprox \mu\rho\de^{-1}$, 
For each $T\in\ZH(T_0)$, define a new shading $Y'(T)=Y(T)\setminus N_{\de^{1-10(\e_1+\e^2)}}(T_0)$.
Note that $Y'$ is still $(\e_1,\e_2)$-two-ends, as $Y$ is $(\e_1,\e_2)$-two-ends.

Let $\cp$ be the set of $\de$-separated planes that intersect the coreline of $T_0$.
For each $P\in\cp$, let $$\ZT(P)=\{T\in\ZH(T_0):T\subset N_\de(P)\}.$$
Since $$\{N_\de(P)\setminus N_{\de^{1-10(\e_1+\e^2)}}(T_0):P\in\cp\}$$ is $O(\de^{10(\e_1+\e^2)})$-overlapping, we have
\begin{equation}
\label{hairbrush}
    |E|\gtrsim \Big|\bigcup_{T\in\ZH(T_0)}Y(T)\Big|\gtrsim \de^{10(\e_1+\e^2)}\sum_{P\in\cp}\Big|\bigcup_{T\in\ZT(P)}Y'(T)\Big|.
\end{equation}
For each $P\in\cp$, let $\ZT'(P)$ be a random sample of $\ZT(P)$ with probability $m^{-1}$, so $\#\ZT'(P)\gtrapprox m^{-1}\#\ZT(P)$.
Since $\ZT$ obeys Wolff's axiom with error $m$, $\ZT(P)$ obeys Wolff's axiom with error $m$, yielding that $\ZT'(P)$ obeys Wolff's axiom with error $1$ (see \cite[Lemma 1.6]{Wang-Wu} for a similar probabilistic argument).
In other words, $\ZT'(P)$ is a Katz--Tao $(\d,1)$-set.
By Theorem \ref{two-ends-furstenberg}, we have
\begin{equation}
\nonumber
    \Big|\bigcup_{T\in\ZT(P)}Y'(T)\Big|\gtrsim\Big|\bigcup_{T\in\ZT'(P)}Y'(T)\Big|\gtrsim \rho^{3/2}(\de^{n-1}\#\ZT'(P))\gtrapprox m^{-1}\rho^{3/2}(\de^{n-1}\#\ZT(P)).
\end{equation}
Put this back to \eqref{hairbrush} so that 
\begin{align}
\label{estimate-2}
    |E|&\gtrsim\de^{10(\e_1+\e^2)}\sum_{P\in\cp}m^{-1}\rho^{3/2}(\de^{n-1}\#\ZT(P))\gtrapprox\de^{O(\e_1+\e^2)}m^{-1}\rho^{3/2}(\de^{n-1}\#\ZH(T_0))\\ \nonumber
    &\gtrapprox \de^{O(\e_1+\e^2)}m^{-1}\rho^{3/2}(\mu\rho\de^{n-2}).
\end{align}

The two estimates \eqref{estimate-1} and \eqref{estimate-2} imply
\begin{align*}
    |E|&\gtrsim \de^{(\e_1+\e^2)}m^{-1/2}\rho^{7/4}\de^{(n-2)/2}(\de^{n-1}\#\ZT)^{1/2}\\
    &\geq  c_\e\de^\e \de^{O(\e_1)}m^{-n/(2n-2)}\rho^{7/4}\de^{(n-2)/2}(\de^{n-1}\#\ZT)^{n/(2n-2)}.
\end{align*}
In the last inequality, we use the estimate $\de^{n-1}\#\ZT\lesssim m$. 

\smallskip

It is standard to deduce \eqref{exceptional-set} from \eqref{hairbrush-esti}.
See, for example, \cite[Corollary 3.4]{Wang-Wu}.
\qedhere

\end{proof}

\smallskip

\subsection{Proof of  Proposition \ref{prop main2} }
Let $p_n=2\,\frac{3n+2}{3n-2}$.
Similar to the argument in the previous section, we define $S(R)$ to be the smallest constant such that 
\begin{equation}
\nonumber
    \|\mathcal{T}^\lambda f\|_{L^{p_n}(B_R)}
    \leq  S(R) \,
    \|f\|_{L^2}^{2-\frac{2(n+1)}{p_n(n-1)}}
    \|f\|_{\wt\MKN(R)}^{\frac{2(n+1)}{p_n(n-1)}-1}.
\end{equation}
Our goal is to prove that, for all $\e>0$ and all $1 \leq R \leq \lambda$, we have
\begin{equation}
\nonumber
    S(R) \lesssim_\varepsilon R^\varepsilon.
\end{equation}
Take $\e_0=\e^{1000}$, where $\e_0$ is the fixed parameter \eqref{eps_0} in the wave-packet decomposition.

\smallskip 

Starting from \eqref{reduction-1}, we follow the induction argument in the previous section until \eqref{mu-1}.
In place of \eqref{mu-1}, choose another $\mu$ as 
\begin{equation}\label{mu}
    \mu = R^{O(\e^2)}\rho^{-3/4}R^{\frac{n-2}{4}}m^{\frac{n}{2n-2}}(R^{-\frac{n-1}{2}}\#\ZT)^{\frac{n-2}{2n-2}}.
\end{equation}
Then, we follow the remaining argument until \eqref{refined-strichartz} and have
\begin{align*}
    \int_{X_4 \cap B_k} \Big| \sum_{T \in \mathbb{T}_{\beta,k}} \mathcal{T}^\la f_T \Big|^{q_n}&\lessapprox R^{O(\varepsilon_0)}\Big(\frac{\mu}{\#\mathbb{T}}\Big)^{\frac{2}{n-1}}\|f\|_2^{q_n}.
\end{align*}
By Lemma \ref{KN-L2-lem2} and \eqref{mu}, we have the following estimate in place of \eqref{L-qn}
\begin{align}
\nonumber
   \int_{X_4 \cap B_k} \Big| \sum_{T \in \mathbb{T}_{\beta,k}} \mathcal{T}^\la f_T \Big|^{q_n}\lessapprox &\,
    R^{O(\varepsilon_0)}(\rho^{-3/4}R^{\frac{n-2}{4}}m^{\frac{n}{2n-2}}(R^{-\frac{n-1}{2}}\#\ZT)^{\frac{n-2}{2n-2}})^{\frac{2}{n-1}} (\#\ZT)^{-\frac{2}{n-1}}\\ \nonumber&
    \cdot({\#\ZT}^{1/2}m^{-1/2}R^{-\frac{n-1}{4}})^{\frac{2n}{(n-1)^2}}\|f\|_{L^2}^{\frac{2(n^2-n-1)}{(n-1)^2}}\|f\|_{\widetilde{\MKN}(R)}^{\frac{2n}{(n-1)^2}}\\[1ex] \label{L-qn-2}
    \lessapprox & \, R^{O(\varepsilon_0+\e^2)}\rho^{-\frac{3}{2(n-1)}} R^{-\frac{n}{2(n-1)}}\|f\|_{L^2}^{\frac{2(n^2 - n - 1)}{(n-1)^2}} \|f\|_{\widetilde{\MKN}(R)}^{\frac{2n}{(n-1)^2}}.
\end{align}

On the other hand, similar to \eqref{l2}, we have 
\begin{equation}
\label{l2-2}
   \int_{X_4 \cap B_k} \Big| \sum_{T \in \mathbb{T}_{\beta,k}} \mathcal{T}^\la f_T \Big|^{2}\lesssim\|\mathcal T^\lambda f\|_{L^{2}(X_2)}^{2}\lesssim R^{O(\e_0)}\rho  \beta R \|f\|_2^2.
\end{equation}
As in the previous section, we interpolate between \eqref{L-qn-2} and \eqref{l2-2} via H\"older's inequality to conclude
\begin{equation}
\nonumber
  \|\mathcal{T}^\lambda f\|_{L^{p_n}(B_R)}
    \lesssim R^{O(\e^2)} \,
    \|f\|_{L^2}^{2-\frac{2(n+1)}{p_n(n-1)}}
    \|f\|_{\wt\MKN(R)}^{\frac{2(n+1)}{p_n(n-1)}-1}.
\end{equation} 
This closes the induction.
{
\section{Sharpness examples}
\label{sec example}

\smallskip

Finally, we prove the sharpness part of Theorem \ref{theo main1}. 
In odd dimensions, this is the familiar Kakeya compression example for positive-definite H\"ormander operators \cite{minicozzi1997negative,GHI}. In even dimensions, we modify the compression examples from \cite{minicozzi1997negative,GHI} by using only a single leaf of the foliation. 
Together with the scaling condition
\[
q'\le \frac{n-1}{n+1}\,p,
\]
these examples identify the full admissible $(q,p)$-range for positive-definite H\"ormander operators.

\begin{example}\label{ex1}
\rm

In odd dimensions, the standard compression examples for positive-definite H\"ormander operators \cite{minicozzi1997negative,GHI,sogge2018instability} yield the sharp diagonal obstruction for $L^q\to L^p$ estimates. In particular, they show that any such estimate must satisfy
\[
p\ge 2\,\frac{3n+1}{3n-3}.
\]
Combined with the scaling condition
\[
q'\le \frac{n-1}{n+1}\,p,
\]
this gives the full admissible $(q,p)$-range in odd dimensions.

In even dimensions, we record a positive-definite model example that is a \emph{one-leaf} variant of the Guth--Hickman--Iliopoulou compression construction. (For the sharpness of Theorem \ref{theo main5} on manifolds, the same proof works if one uses the construction of Minicozzi--Sogge \cite{minicozzi1997negative} instead.) In the example from \cite{GHI}, the curved Kakeya set concentrates in a neighborhood of a $\frac{n+2}{2}$-dimensional set, which is foliated by $\frac n2$-dimensional leaves, each modeled on the Kakeya compression example in dimension $n-1$. 
Here, in order to impose a further restriction on the admissible range of $q$, we keep only a single leaf. 
This still preserves the wave packet multiplicity, but the ambient concentration set now has dimension $\frac n2$ rather than $\frac{n+2}{2}$. 
As a result, one obtains the sharper off-diagonal necessary condition
\begin{equation}\label{eq:even-sharp-line}
    \frac1q\le \frac{3n-2}{4}-\frac{3n}{2p}.
\end{equation}

More precisely, let $n\ge 4$ be even. 
There exists a positive-definite H\"ormander operator $\mathcal H^\lambda$ and a family of wave packets
\[
f=\sum_T f_T,
\]
with the following properties.

First, $\|f\|_\infty\sim 1$, and the wave packets are indexed by a family of tubes $\mathbb T$ of cardinality
\[
\#\mathbb T\sim \lambda^{\frac{n-2}{2}},
\]
corresponding to localizing the frequency support to a $\lambda^{-1/2}$-slab around a fixed $\tfrac n2$-dimensional leaf in the example of \cite{GHI}. 
In particular, $f$ is supported in frequency space on a set of measure $\sim \lambda^{-1/2}$, so that
\[
\|f\|_{L^q(B_1^{n-1})}^p\sim \lambda^{-p/(2q)}.
\]

Second, for each $T\in \mathbb T$,
\[
|\mathcal H^\lambda f_T(x)|\gtrsim \lambda^{-\frac{n-1}{2}},
\qquad x\in T.
\]
All these tubes are contained in the $\lambda^{1/2}$-neighborhood of a single $\frac n2$-dimensional leaf $S$. 
Hence
\[
|N_{\lambda^{1/2}}(S)\cap B_\lambda^n|
\sim
\lambda^{\frac n2}\bigl(\lambda^{1/2}\bigr)^{\frac n2}
=
\lambda^{\frac{3n}{4}}.
\]

Third, a typical point of $N_{\lambda^{1/2}}(S)\cap B_\lambda^n$ lies in
$\sim \lambda^{\frac{n-2}{4}}$
of the tubes $T\in\mathbb T$. 
Therefore, after assigning random signs and applying Khintchine's inequality, we have
\[
\|\mathcal H^\lambda f\|_{L^p(B_\lambda^n)}^p
\gtrsim
\lambda^{\frac{3n}{4}}
\Bigl(
\lambda^{\frac{n-2}{4}}
\lambda^{-(n-1)}
\Bigr)^{p/2}
=
\lambda^{\frac{3n}{4}-\frac{3n-2}{8}p}.
\]
Consequently, any estimate of the form
\[
\|\mathcal H^\lambda f\|_{L^p(B_\lambda^n)}
\lesssim_\varepsilon
\lambda^\varepsilon \|f\|_{L^q(B_1^{n-1})}
\]
in even dimensions must satisfy
\[
\lambda^{\frac{3n}{4}-\frac{3n-2}{8}p}
\lesssim
\lambda^{-p/(2q)},
\]
which is equivalent to \eqref{eq:even-sharp-line}.
\end{example}
\begin{remark}\label{rem:admissible-range}
The condition
\[
q'\le \frac{n-1}{n+1}\,p
\]
is forced by scaling.

In odd dimensions, Theorem \ref{theo main1} together with Example \ref{ex1} shows that the full admissible range is
\[
p\ge 2\,\frac{3n+1}{3n-3},
\qquad
q'\le \frac{n-1}{n+1}\,p.
\]

In even dimensions, set
\[
p_0:=2\,\frac{3n+2}{3n-2},
\qquad
p_1:=2\,\frac{3n+1}{3n-3}.
\]
By \cite{GHI}, the diagonal estimate holds at the point $(1/p_0,1/p_0)$, while our scaling-line endpoint estimate yields the point
\[
\biggl(\frac1p,\frac1q\biggr)
=
\biggl(\frac1{p_1},\,1-\frac{n+1}{(n-1)p_1}\biggr).
\]
Interpolating these two bounds gives precisely the line 
\[
\frac1q= \frac{3n-2}{4}-\frac{3n}{2p},
\qquad
p_0\le p\le p_1.
\]
For $p\ge p_1$, the scaling condition is stronger. Therefore, in even dimensions, our estimates together with \cite{GHI} yield the full admissible range
\[
p\ge 2\,\frac{3n+2}{3n-2},
\qquad
\frac1q\le \min\biggl\{\frac{3n-2}{4}-\frac{3n}{2p},\,
1-\frac{n+1}{(n-1)p}\biggr\}.
\]
Since Example \ref{ex1} shows that \eqref{eq:even-sharp-line} is necessary, this range is sharp.

\end{remark}

}

\begin{remark}
\label{bourgain-guth-example}
A similar one-leaf construction shows that, in the absence of the positivity assumption \HHH, the Bourgain--Guth \cite{bourgain2011bounds} bound together with the Tomas--Stein theorem yields the full \(L^q\to L^p\) range for general H\"ormander operators satisfying \H and \HH. 

More specifically, for $\mathcal H^\lambda$  satisfying \H and \HH, in odd dimensions, it is well-known (see \cite[Section 2]{bourgain1991lp} and \cite[Section 6]{bourgain1995somenew}) that there is a function $f=\sum_{T\in\TT}f_T$ with $\|f\|_\infty\lesssim1$, expressed as a sum of wave packets indexed by $\TT$ with $\#\TT\sim \lambda^\frac{n-1}{2}$, and a $\frac{n+1}{2}$-dimensional surface $S$ such that $|\mathcal{H}^\lambda f(x)|\gtrsim \lambda^{-\frac{n-1}{4}}$ for all $x\in N_1(S)$.
Since $|N_1(S)\cap B_\lambda^n|\sim \lambda^\frac{n+1}{2}$, this implies that
\[
    \|\mathcal H^\lambda f\|^p_{L^p(B_\lambda^n)}\gtrsim \|f\|^p_{L^q(B_1^{n-1})}
\]
can only hold if $p\geq2\frac{n+1}{n-1}$. 
This example shows that the Tomas--Stein theorem is sharp.

For even $n$, one can construct a similar one-leaf example as in Remark \ref{ex1}, based on the $n-1$-dimensional example above, to obtain the following.
There is a sum of wave packets $f=\sum_{T\in\TT}f_T$ and an $\frac{n}{2}$-dimensional surface $S$ such that the following is true:
\begin{enumerate}
    \item $\#\TT\sim\lambda^{\frac{n-2}{2}}$, and $\|f\|_\infty\sim 1$.
    \item $f$ is supported in a $\lambda^{-1/2}$-neighborhood of the hyperplane $\{\xi_{n-1}=0\}\cap B^{n-1}_{1}$.
    \item $S\subset\R^{n-1}$, and $|\mathcal{H}^\lambda f(x)|\gtrsim \lambda^{-\frac{n}{4}}$ for all $x\in \bar N_1(S)\cap \{x=(\bar x, x_{n}): |x_{n}|\lesssim \lambda^{1/2}\}$, where $x\in\R^n$, $\bar N_1(S)=\{x=(\bar x, x_{n}): \dist_{n-1}(\bar x, S)\leq1\}$, with $\dist_{n-1}$ denoting the distance in $\R^{n-1}$.
\end{enumerate}
Let $\tilde S:=\bar N_1(S)\cap \{x=(\bar x, x_{n}): |x_{n}|\lesssim \lambda^{1/2}\}$.
As a result, we have $|\tilde S\cap B_\lambda^n|\sim \lambda^\frac{n+1}{2}$ and $\|f\|_q^p\sim \lambda^{-p/(2q)}$.
Therefore, 
\[
    \|\mathcal H^\lambda f\|_{L^p(B_\lambda^n)}^p\gtrsim \|f\|_{L^q(B_1^{n-1})}^p
\]
can only be true if
\[
    \frac{1}{q}\leq\frac{n}{2}-\frac{n+1}{p}.
\]
This condition is exactly the one obtained by interpolating the $(L^2, L^{\frac{2(n+1)}{n-1}})$ estimate of  Tomas--Stein and the $(L^\frac{2(n+2)}{n}, L^\frac{2(n+2)}{n})$ estimate of Bourgain--Guth \cite[Theorem 4]{bourgain2011bounds}.

\end{remark}

\bibliography{reference}
\bibliographystyle{alpha}
\end{document}